\pgfplotsset{compat=newest}
\newcommand{\bA}{\mathbf A}
\newcommand{\bB}{\mathbf B}
\newcommand{\bH}{\mathbf H}
\newcommand{\bI}{\mathbf I}
\newcommand{\bP}{\mathbf P}
\newcommand{\bV}{\mathbf V}
\newcommand{\bn}{\mathbf n}
\newcommand{\bu}{\mathbf u}
\newcommand{\bv}{\mathbf v}
\newcommand{\bw}{\mathbf w}
\newcommand{\T}{\mathcal T}
\newcommand{\tr}{{\rm tr}}
\newcommand{\divG}{{\mathop{\,\rm div}}_{\Gamma}}
\newcommand{\gradG}{\nabla_{\Gamma}}
\newcommand{\gradGh}{\nabla_{\Gamma_h}}
\newcommand{\OGamma}{\Omega^\Gamma_h}
\newcommand{\cE}{\mathcal E}
\newcommand{\cT}{\mathcal T}
\newcommand{\tlam}{\tilde \lambda}
\newcommand{\CEl}{\cE_h^{-\ell}}
\newcommand{\kmax}{k_{\max}}
\newcommand{\He}{H^{\rm ex}}
\newcommand{\Paj}{P_{a_h,j}}
\newcommand{\Pbj}{P_{b_h,j}}
\def\Enorm#1{|\!\| #1 \|\!|_h}
\def\Ecnorm#1{|\!\| #1 \|\!|}
\newtheorem{assumption}{Assumption}[section]
\newtheorem{remark}{Remark}[section]
\numberwithin{equation}{section}
\begin{document}
\title{Analysis of finite element methods for surface vector-Laplace eigenproblems}
\author{
Arnold Reusken\thanks{Institut f\"ur Geometrie und Praktische  Mathematik, RWTH-Aachen
University, D-52056 Aachen, Germany (reusken@igpm.rwth-aachen.de).}
}
\maketitle

\begin{abstract} In this paper we study finite element discretizations of a surface vector-Laplace eigenproblem. We consider two known classes of finite element methods, namely one based on a vector analogon of the Dziuk-Elliott surface finite element method and one based on the so-called trace finite element technique. A key ingredient in both classes of methods is a penalization method that is used to enforce tangentiality of the vector field in a weak sense.  This penalization and the perturbations that arise from numerical approximation of the surface lead to essential \emph{nonconformities} in the discretization of the variational formulation of the vector-Laplace eigenproblem. We present a general abstract framework applicable to such nonconforming discretizations of eigenproblems. Error bounds both for eigenvalue and eigenvector approximations are derived that depend on certain consistency and approximability parameters. Sharpness of these bounds is discussed. Results of a numerical experiment illustrate certain convergence properties of such finite element discretizations of the surface vector-Laplace eigenproblem. 
\end{abstract}
\begin{keywords} 
 vector-Laplace eigenproblem, surface finite element method, trace finite element method.
 \end{keywords}
\section{Introduction} In recent years there has been a strongly growing interest in the field of modeling and numerical simulation of surface fluids based on Navier-Stokes type PDEs on (evolving) surfaces  \cite{arroyo2009,Jankuhn1,Kobaetal_QAM_2017,miura2017singular,Nitschkeetal_arXiv_2018,reuther2015interplay}. 
Navier-Stokes equations posed on manifolds is  a classical topic in  analysis, cf., e.g., \cite{ebin1970groups,mitrea2001navier,taylor1992analysis,Temam88}. The development and (error) analysis of  numerical methods for surface (Navier-)Stokes equations has been studied in recent literature, e.g.,  \cite{nitschke2012finite,reuther2015interplay,reusken2018stream,reuther2018solving,fries2018higher,olshanskii2018finite,olshanskii2019penalty,Bonito2019a,OlshanskiiZhiliakov2019,Lederer2019}. 
 In all these papers  finite element discretization methods are treated. In almost all papers on finite element discretizations of surface (Navier-)Stokes equations the key condition that  the velocity has to be tangential to the surface is handled by a penalty technique. In such a method  nontangential components are allowed in the discretization but their magnitude is made sufficiently small by appropriate penalization. Alternatively, for surfaces that are simply connected, one can use an approach based  on a stream function formulation \cite{nitschke2012finite,reusken2018stream}. Another alternative that avoids penalization is introduced in the recent papers \cite{Lederer2019,Bonito2019a}, in which a surface finite element approach is combined with a Piola transformation for the construction of divergence-free tangential finite elements. 

Most of the above-mentioned  papers on finite element  methods treat the discretization of surface (Navier-)Stokes equations. In the papers  \cite{hansbo2016analysis,grossvectorlaplace,Jankuhn4} finite element discretizations of surface vector-Laplace equations are studied. 
In none of these papers, or in any other paper that we know of, the discretization of a vector-Laplace \emph{eigenproblem} has been studied. In the recent paper \cite{Bonito2019a} an error analysis of surface finite element discretizations for a \emph{scalar} Laplace-Beltrami eigenproblem is presented. In this paper we analyze finite element discretizations of a \emph{vector-}Laplace eigenproblem of the form
\begin{equation} \label{EV1}
  - \hat \Delta_\Gamma\bu + \bu = \lambda \bu \quad \text{on}~~\Gamma,
\end{equation}
where $\Gamma$ is a closed smooth two-dimensional surface. The eigenfunction $\bu$ is a field tangential to $\Gamma$. The vector-Laplace operator that we study is of the form $-\hat \Delta_\Gamma \bu :=-\tfrac12 \bP\divG(\gradG \bu  + (\gradG \bu)^T)$, but the analysis also applies to variants of this Laplacian. Precise definitions of $\bP$ and the tangential differential operators involved in $\hat \Delta_\Gamma$ are given in section~\ref{Sectcontinuous}. Clearly, such vector-Laplace eigenproblems are of interest in the field of surface (Navier-)Stokes equations. A further motivation for such eigenproblems comes from applications of so-called approximate Killing vector fields in computer graphics \cite{Azencot2015,BenChen2010,Solomon2011,Azencot2013,Tao2016}.

For discretization of the  eigenproblem \eqref{EV1} we restrict to the most popular class of finite element methods used for discretization of surface (Navier-)Stokes equations, namely those that combine standard finite element space used for scalar surface PDEs with a penalty approach, e.g. \cite{reuther2018solving,fries2018higher,olshanskii2018finite,olshanskii2019penalty}. 

There is extensive literature on the analysis of finite element discretizations of elliptic eigenproblems, cf. the overview papers \cite{Babuska,Boffi} and references therein. We also refer to the seminal paper~\cite{Knyazev} in which several approaches for the analysis of variational Galerkin methods for elliptic eigenproblems are discussed.   
The analyses presented in these papers apply to a \emph{conforming} Galerkin setting, in the sense that the eigenproblem discretization is determined in a subspace of the Hilbert space in which the original eigenproblem is posed. 
There are some papers in which this theory is adapted to a \emph{non}conforming setting. For example, in
 \cite{Antonietti} a class of discontinuous Galerkin finite element nonconforming discretizations for the scalar Laplace eigenproblem is analyzed. 
Also in the discrete eigenproblem that we analyze in this paper there are severe nonconformities, due to which established conforming theories \cite{Babuska,Boffi,Knyazev} are not applicable.  In the setting of the finite element discretizations of the vector-Laplace eigenproblem \eqref{EV1} that we study, there are the following \emph{two nonconformities} that are related to very different aspects.
 Firstly,  instead of the space of \emph{tangential} vector fields, used in the continuous problem, an extended space is used, which allows \emph{non}tangential velocity components. Furthermore, in the trace finite element approach one uses finite element polynomials defined in a small volume neighborhood of the surface. These function space extensions are the reason why one uses  \emph{penalization}  in these finite element methods. 
   A second very different source of nonconformity comes from the approximation of the exact surface: $\Gamma_h \approx \Gamma$. This we call a \emph{geometric inconsistency}. We note that such geometric inconsistencies are a key issue in the analysis of scalar surface partial differential equations, too \cite{DEreview,Bonito2019}. The former issue related to tangentiality arises only for vector-valued surface PDEs.  
   The main topic of this paper is an  error analysis of  finite element element discretizations of the vector-Laplace eigenproblem \eqref{EV1} that handles these two essential nonconformities. 
   We develop the error analysis in an abstract general framework. An  eigenproblem in the following standard Hilbert space setting is considered.  Let $ H \subset \hat H$ be two infinite dimensional Hilbert spaces, with $H$ compactly embedded in $\hat H$. Let $a:\, H\times H \to \Bbb{R}$, $b:\, \hat H\times \hat H \to \Bbb{R}$ be  bounded symmetric elliptic bilinear forms. We consider the eigenproblem: $u \in H,\, \lambda \in \Bbb{R}$ such that
   \begin{equation} \label{AA}
    a(u,v)= \lambda b(u,v) \quad \text{for all}~v \in H.
   \end{equation}
The vector-Laplace problem \eqref{EV1} can be cast in this variational form. For a nonconforming discretization of this problem we use the following setting, cf. section~\ref{sectHilbert} for more details. We introduce a ``richer''  space $\He$, an ``extension operator'' $\cE: \, H \to \He$, with a corresponding lifting (or pull back) operator $\CEl: \He \to H$, which is a left inverse of $\cE$. Furthermore, $(V_h)_{h >0}$ is a family of finite dimensional subspaces of $\He$. We study discretizations of the form: $\tilde u \in V_h$, $\tilde \lambda \in \Bbb{R}$ such that
\begin{equation} \label{AAd} 
    a_h(\tilde u,\tilde v) = \tilde \lambda b_h(\tilde u,\tilde v) \quad \text{for all}~\tilde v \in V_h.
\end{equation}
The ``nonconforming'' bilinear forms $a_h(\cdot,\cdot)$ and  $b_h(\cdot,\cdot)$ have the structural form $a_h(u,v)= \tilde a_h(u,v) +k_a(u,v)$, $b_h(u,v)=\tilde b_h(u,v)+k_b(u,v)$, $u,v \in \He$. The symmetric positive semidefinite bilinear forms $k_a(\cdot,\cdot)$ and $k_b(\cdot,\cdot)$ correspond to penalizations. These penalizations and $\tilde a_h(\cdot,\cdot)$, $\tilde b_h(\cdot,\cdot)$, have to fulfill certain consistency conditions such that $a_h(\cdot,\cdot)$ and $b_h(\cdot,\cdot)$ are ``close to'' $a(\cdot,\cdot)$ and $b(\cdot,\cdot)$, respectively. These consistency conditions, combined with a condition on the relative strength of the penalizations $k_a(\cdot,\cdot)$, $k_b(\cdot,\cdot)$, and with an approximabilty condition  for the (extended) eigenvectors in the space $V_h$ lead to error bounds both for the eigenvalue and eigenvector approximations. We will show how this general error analysis framework  can be applied to two known classes of finite element discretization methods for the eigenproblem \eqref{EV1}.

As far as we know, this is the first error analysis that applies to finite element discretizations of vector-Laplace problems \eqref{EV1} and in which nonconformities due to penalization and due to geometric inconsistencies are treated.  This analysis  leads to optimal order error bounds both for eigenvalues and eigenvectors. Furthermore, in the same spirit as in \cite{Knyazev1,Knyazev}, our estimates depend on explicitly given quantities, in our case consistency and approximability parameters.  Our analysis is \emph{sub}optimal with regard to the following aspect. It does not take into account that different eigenvectors may have different approximabilities, resulting in different levels of error for approximate eigenvalues, cf. the discussion in \cite{Knyazev}.  Our estimate for the $j$th eigenvalue error depends on approximability of all eigenvectors in the corresponding eigenspace, as well as the eigenvectors corresponding to all smaller eigenvalues. It is well-known that this is not realistic \cite{Knyazev}. We note, however, that in the setting of our applications  it is reasonable to assume that all eigenvectors corresponding to the smallest $j$ eigenvalues have comparable approximability properties.

The remainder of the paper is organized as follows. In section~\ref{Sectcontinuous} we introduce the variational formulation of the surface vector-Laplace eigenproblem \eqref{EV1}. In section~\ref{sectFEM} we recall two basic finite element discretization methods for vector-valued surface PDEs, known from the literature. Both methods use the same scalar finite element space for each of the three components of the velocity field $\bu$, and the tangential condition is weakly enforced by a penalty method. In section~\ref{sectHilbert} an abstract general analysis framework is presented. In this framework a discretization of the eigenproblem is introduced in which the penalty technique and inconsistencies due to geometry approximation are formalized. For this  abstract discrete problem error bounds for the eigenvalues  and eigenvectors are derived in the sections~\ref{sectRR} and \ref{secteigenvector}, respectively. A discussion of the main results of this abstract analysis is given in section~\ref{sectdiscussion}. In section~\ref{sectTraceFEM} the general analysis is applied to the finite element methods treated in section~\ref{sectFEM} and (optimal order) error bounds both for eigenvalues and eigenvectors are derived.  Finally, in section~\ref{sectExp} we present results of a numerical experiment that illustrates certain convergence properties.

\section{Vector-Laplace eigenproblem} \label{Sectcontinuous}
Let $\Gamma \subset \Bbb{R}^3$ be a sufficiently smooth (at least $C^2$) compact surface without boundary.  Vector  fields on $\Gamma$ are denoted by boldface symbols $\bu$, $\bv$.  
 We use the setting as in papers on surface partial differential equations, which is based on tangential calculus, e.g., \cite{hansbo2016analysis,Bonito2019}.
A tubular neighborhood of $\Gamma$ is defined by
$U_\delta := \left\lbrace x \in \mathbb{R}^3 \mid \vert d(x) \vert < \delta \right\rbrace$,
with $\delta > 0$ and $d$ the signed distance function to $\Gamma$, which we take negative in the interior of $\Gamma$.
 On $U_\delta$ we define $\bn(x) = \nabla d(x)$, the outward pointing unit normal on $\Gamma$, $\bH(x) = \nabla^2d(x)$, the Weingarten map,  $\bP = \bP(x):= \bI - \bn(x)\bn(x)^T$, the orthogonal projection onto the tangent space, $p(x) = x - d(x)\bn(x)$, the closest point projection. We assume $\delta>0$ to be sufficiently small such that the decomposition 
$
x = p(x) + d(x) \bn(x)
$
is unique for all $x \in U_{\delta}$. The constant normal extension for vector functions $\bv \colon \Gamma \to \mathbb{R}^3$ is defined as $\bv^e(x) := \bv(p(x))$, $x \in U_{\delta}$. The extension for scalar functions is defined similarly. Note that on $\Gamma$ we have $\nabla \bw^e = \nabla(\bw \circ p) = \nabla \bw^e \bP$, with $\nabla \bw := (\nabla w_1, \nabla w_2, \nabla w_3)^T \in \mathbb{R}^{3 \times 3}$ for smooth vector functions $\bw \colon U_\delta \to \mathbb{R}^3$. For a scalar function $g \colon \Gamma \to \mathbb{R}$ and a vector function $\bv \colon \Gamma \to \mathbb{R}^3$ (not necessarily tangential to $\Gamma$) we  define surface derivatives by
\begin{equation*} \begin{split}
\nabla_{\Gamma} g(x) &= \bP(x)\nabla g^e(x) \in \Bbb{R}^3, \quad x \in \Gamma, \\
\nabla_{\Gamma} \bv(x) &= \bP(x)\nabla \bv^e(x) \bP(x) \in \Bbb{R}^{3 \times 3}, \quad x \in \Gamma. 
\end{split}
\end{equation*} 
If $\bv$ is tangential to $\Gamma$, $\nabla_{\Gamma} \bv$ is the covariant derivative. 
Finally, we introduce a notation for the symmetric part of $\gradG \bv$:
\begin{equation*}
E(\bv):= \frac{1}{2} \left( \gradG \bv + \gradG \bv^T \right) \in \Bbb{R}^{3 \times 3}.
\end{equation*}
 We need the surface divergence operator for vector-valued functions $\bu \colon \Gamma \to \mathbb{R}^3$ and tensor-valued functions $\bA \colon \Gamma \to \mathbb{R}^{3\times3}$. These  are defined as
\begin{equation*} \begin{split}
\divG \bu &:= \textrm{tr} (\gradG \bu),  \\
\divG \bA &:= \left( \divG (e_1^T\bA), \divG (e_2^T\bA),\divG (e_3^T\bA)  \right)^T,
\end{split}
\end{equation*}
with $e_i$ the $i$th basis vector in $\mathbb{R}^3$.
The surface Sobolev space of $k$ times weakly differentiable  functions is denoted by $H^k(\Gamma)$, $k \in \Bbb{N}$. For vector valued functions (values in $\Bbb{R}^3$) we write $\bH^k(\Gamma):=H^k(\Gamma)^3$. We introduce a notation for the  vector valued functions that are tangential:
\begin{equation*}
\bV_T := \left\lbrace \bu \in \bH^1(\Gamma) \mid \bu \cdot \bn =0 \right\rbrace.
\end{equation*}
Endowed with the usual $H^1$ scalar product, the space $\bV_T$ is a Hilbert space.
On $\bV_T$ we define the continuous, symmetric elliptic bilinear form:
\begin{equation} \label{defa}
 a(\bu,\bv):= \int_\Gamma \tr\big( E (\bu)^T E(\bv)\big) +\bu\cdot \bv \, ds, \quad \bu,\bv \in \bV_T.
\end{equation}
Ellipticity of this bilinear form follows from a surface Korn's inequality \cite{Jankuhn1}.
We formulate a vector-Laplace eigenproblem: determine $\lambda \in \Bbb{R}$, $\bu^\ast \in \bV_T$ such that
\begin{equation} \label{varev}
  a(\bu^\ast,\bv)=\lambda (\bu^\ast,\bv)_{L^2(\Gamma)}~~\text{for all}~\bv \in \bV_T. 
\end{equation}
With $-\hat \Delta_\Gamma \bu :=-\tfrac12 \bP\divG(\gradG \bu  + (\gradG \bu)^T)$ this eigenproblem has a formulation in strong form as
\[-\hat \Delta_\Gamma \bu + \bu = \lambda \bu.
\]
 
\begin{remark} \label{RemBochner} \rm
The vector-Laplace operator $\hat \Delta_\Gamma$ differs from the Hodge  and Bochner Laplacians. The latter is defined by $\Delta_\Gamma \bu := \bP \divG (\gradG \bu)$. Finite element methods for surface Bochner Laplace problems are studied in \cite{hansbo2016analysis}. The following relation holds, cf.~\cite{Jankuhn1}:
\[
   -2 \hat \Delta_\Gamma \bu = - \Delta_\Gamma \bu - \gradG \divG \bu - K\bu.
\]
The Bocher Laplacian $- \Delta_\Gamma$ is an elliptic operator, with a smallest eigenvalue that is strictly positive. The operator $- \gradG \divG$ is positive semidefinite. The operator $-\hat \Delta_\Gamma$ is positive semidefinite and can have an eigenvalue (close to) zero due to the additional operator $K \, {\rm id}$.
\end{remark}

Killing vector fields (KVF) are (tangential) vector fields $\bu$ that are in the in the kernel of $E$, i.e., $E(\bu)=0$. These KVF are studied in  differential geometry \cite{Petersen}, in literature on (approximate) isometries in computer graphics \cite{Azencot2015,BenChen2010,Tao2016} and in papers that treat surface (Navier-)Stokes equations \cite{Kobaetal_QAM_2017,miura2017singular,Jankuhn1}. Note that KVF are in the kernel of the operator $-\hat \Delta_\Gamma$. To obtain an elliptic operator we add a shift, i.e., we consider $-\hat \Delta_\Gamma + I$, which does not change the eigenfunctions and shifts all eigenvalues by $1$. 

In the remainder of this paper we consider the  vector-Laplace eigenvalue problem  \eqref{varev} and analyze  finite element discretization methods for this problem.   We consider the situation that we want to approximate the eigenspaces corresponding to \emph{a fixed (small) number of the smallest eigenvalues}.

\section{Finite element discretization methods} \label{sectFEM}
 The most popular and conceptually simplest method for discretization of vector-valued surface PDEs is a generalization of the Dziuk-Elliott surface finite element method (SFEM), in which  standard continuous parametric Lagrange finite elements are used to approximate a vector field on the surface, and the tangent condition is enforced weakly using a penalization term. Another approach that uses the same penalization technique is based on trace finite elements (TraceFEM).

We outline the basic structure of both methods. We first consider a variant of the SFEM for vector-valued surface PDEs.
Several options for constructing higher order parametric surface approximations have appeared. We recall one basic variant.
We assume a piecewise triangular (quasi-uniform) approximation $\Gamma^{\text{lin}}$ of the surface $\Gamma$, with mesh size parameter denoted by $h$ and vertices $x_j$, $j=1,\ldots,n_h$, that is assumed to be sufficiently close to $\Gamma$: ${\rm dist}( x_j,\Gamma )\leq c h^2$, $1 \leq j\leq n_h$. The nodal Lagrange basis functions of piecewise polynomials of degree $k_g$ on $\Gamma^{\text{lin}}$ are denoted by $\phi_i$. Recall that $p$ denotes the closest point projection onto $\Gamma$. We define
\begin{equation} \label{kg1}
 \Gamma_h^{k_g}:= \{\, L(x)~,~ x \in \Gamma^{\text{lin}}\,\}, ~~L(x):=\sum_i p(x_i) \phi_i(x). 
\end{equation}
A corresponding (higher order) finite element space is defined by
\begin{equation} \label{defVh1} \begin{split}
  V_h^{k,k_g}&:= \left\lbrace \, v \in H^1(\Gamma_h^{k_g})~\mid~v=\bar v \circ L^{-1},~~\bar v_{|T} \in P_k,~\forall ~T \in \Gamma^{\text{lin}}\, \right\rbrace, \\
  \bV_h^{k,k_g} &:=(V_h^{k,k_g})^3. \end{split}
\end{equation}
Note that $k_g$ denotes the degree of the polynomials used in the parametric mapping $L$ and $k$ the degree of the polynomials used in the finite element space. 
To simplify the notation we delete the superscript $k_g$ and write $\Gamma_h$, $V_h^{k}$, $\bV_h^k$. 

In  finite element methods for surface vector-Laplace and surface (Navier-)Stokes equations it is convenient to allow a possibly nontangential velocity field $\bu \in \Bbb{R}^3$. We emphasize that in the remainder of this section \emph{vector fields $\bu$, $\bu_h$ are not necessarily tangential}. A vector field on $\Gamma$ is decomposed  in tangential and normal components as $\bu=\bu_T+ (\bu\cdot \bn)\bn =: \bu_T + u_N \bn$. For the symmetric gradient the identity $E(\bu) = E(\bu_T)+u_N \bH$ holds, hence
\begin{equation} \label{idE}
  E(\bu_T)= E(\bu) - u_N \bH.
\end{equation}
In the finite element method we use a penalty term, denoted by $k_h(\cdot,\cdot)$ below, to enforce a discrete solution $\bu_h$ to be ``almost tangential''.
 We define obvious discrete variants of derivatives and bilinear forms used in the surface vector-Laplace eigenvalue problem \eqref{varev}, in particular $\bP_h:=\bI- \bn_h \bn_h^T$, with $\bn_h$ the unit normal on $\Gamma_h$, and (for $\bu \in H^1(\Gamma_h)^3$ smoothly extended off $\Gamma_h$):
\begin{equation}  \label{defall} \begin{split}
 \gradGh \bu(x) &:= \bP_h(x) \nabla \bu(x) \bP_h(x),\quad x \in \Gamma_h, \\ 
  E_h(\bu)&  := \frac12 \big(\gradGh \bu + \gradGh \bu^T\big), \quad E_{T,h}(\bu):=E_h(\bu) - u_N \bH_h, \\
 a_h(\bu,\bv) &:= \int_{\Gamma_h} \tr (E_{T,h}\big(\bu)^T E_{T,h}(\bv)\big)\, ds_h + \int_{\Gamma_h}\bP_h \bu_h \cdot \bP_h \bv_h \, ds_h, \\
 k_h(\bu,\bv)&:= \eta \int_{\Gamma_h} (\bu \cdot \tilde{\bn}_h) (\bv \cdot \tilde{\bn}_h)  \, ds_h,\quad \eta\sim h^{-2}.
\end{split}
\end{equation}
The scaling $\eta\sim h^{-2}$ of the penalty parameter is based on analysis from the literature. 
The curvature tensor $\bH_h$ is an approximation of the  Weingarten mapping $\bH$. The vector $\tilde{\bn}_h$   used in the penalty term is a sufficiently accurate approximation of the exact normal $\bn$. It should be  of at least one order higher accuracy than the normal approximation $\bn_h$, cf. \cite{hansbo2016analysis,Jankuhn2} and section~\ref{sectTraceFEM}.
The finite element discretization of the eigenvalue  problem  \eqref{varev} is as follows: determine $\lambda_h \in \Bbb{R}$,
 $\bu_h \in  \bV_{h}^{k}$ such that
\begin{equation} \label{P2hh}\begin{split} 
 A_h(\bu_h,\bv_h)& =\lambda_h B_h(\bu_h,\bv_h) \quad \text{for all}~\bv_h \in \bV_{h}^{k}, \\
 A_h(\bu_h,\bv_h)&: = a_{h}(\bu_h,\bv_h) + k_h(\bu_h, \bv_h), \\ B_h(\bu_h,\bv_h)&:=(\bu_h, \bv_h)_{L^2(\Gamma_h)}. 
\end{split} \end{equation}

We now briefly address the TraceFEM for vector surface PDEs. 
We assume that the surface $\Gamma$ is respresented as the zero level of a smooth level set function $\phi$ that is defined on a polygonal domain $\Omega \subset \Bbb{R}^3$ that contains the surface $\Gamma$. 
Let $\{ \mathcal{T}_h \}_{h>0}$ be a family of (quasi-uniform) shape regular tetrahedral triangulations of  $\Omega$.  
We construct a geometry approximation $\Gamma_h \approx \Gamma$ that is based on a parametric transformation of the \emph{triangulation}. As input for this transformation we assume a sufficiently accurate approximation $\phi_h \approx \phi$ that is continuous and piecewise polynomial of degree $k_g$ on $\cT_h$. 
Based on the piecewise \emph{linear} nodal interpolation of $\phi_h$, which is denoted by $\hat{\phi}_h$, we define the low order piecewise planar geometry approximation $
\Gamma^{\text{lin}} := \{ x \in \Omega \mid \hat{\phi}_h (x) = 0\}$. 
The  tetrahedra $T \in \cT_h$ that have a nonzero intersection with $\Gamma^{\text{lin}}$ are collected in the set denoted by $\T_h^\Gamma$. The domain formed by all tetrahedra in $\T_h^\Gamma$ is denoted by $\OGamma:= \{ x \in T \mid T \in \T_h^\Gamma \}$. Let $\Theta_h^{k_g}:\OGamma \to \Omega $ be the \emph{mesh} transformation of order $k_g$ as defined in \cite{grande2017higher}. This is a vector valued mapping and each of its components is a standard Lagrangian finite element function on $\Omega_h^\Gamma$ of degree $k_g$.   
We denote the transformed cut mesh domain by $\Omega^{\Gamma}_\Theta := \Theta_h^{k_g}(\Omega^\Gamma_h)$ and the
approximation of $\Gamma$ is defined as
\begin{equation} \label{kg2}
\Gamma_{h}^{k_g} := \Theta_h^{k_g}(\Gamma^{\text{lin}}) = \left\lbrace x \mid \hat{\phi}_h((\Theta_{h}^{k_g})^{-1}(x)) = 0 \right\rbrace \subset \Omega^{\Gamma}_\Theta.
\end{equation}
The finite element  space is defined by
\begin{equation} \label{TraceFEspace} \begin{split} 
V_{h,\Theta}^{k,k_g} & := \left\lbrace v \in H^1(\Omega_h^\Gamma) \mid v=\bar v \circ (\Theta_h^{k_g})^{-1},~\bar v_{|T} \in P_k,~\forall ~T \in \cT_h^\Gamma   \right\rbrace, \\
\bV_{h,\Theta}^{k,k_g} &:= (V_{h,\Theta}^{k,k_g})^3. \end{split}
\end{equation}
Again we delete the superscript $k_g$ and write $V_{h,\Theta}^k$, $\bV_{h,\Theta}^{k}$.
For treating the tangential constraint we use the same penalty term $k_h(\cdot,\cdot)$ as above. Since we use an unfitted finite element method, we need a stabilization that eliminates instabilities caused by the small cuts. For this we use the so-called ``normal derivative volume stabilization'' \cite{grande2017higher}:
\begin{align*} 
  s_h(\bu,\bv)& :=  \int_{\Omega_{\Theta}^{\Gamma}} (\nabla \bu \bn_h) \cdot (\nabla \bv \bn_h)  \, dx.
\end{align*}
The TraceFEM discretization of the eigenvalue  problem  \eqref{varev} is as follows: determine $\lambda_h \in \Bbb{R}$,
 $\bu_h \in  \bV_{h,\Theta}^{k}$ such that
\begin{equation} \label{P2h}\begin{split} 
 A_h(\bu_h,\bv_h)& =\lambda_h B_h(\bu_h,\bv_h) \quad \text{for all}~\bv_h \in \bV_{h,\Theta}^{k}, \\
 A_h(\bu_h,\bv_h)&: = a_{h}(\bu_h,\bv_h) + k_h(\bu_h, \bv_h) + \rho_a  s_h(\bu_h,\bv_h),\\ B_h(\bu_h,\bv_h)&:=(\bu_h, \bv_h)_{L^2(\Gamma_h)} + \rho_b s_h(\bu_h,\bv_h), 
\end{split} \end{equation}
with $a_{h}(\cdot,\cdot),\, k_h(\cdot,\cdot)$ as in \eqref{defall}. Appropriate scalings for the stabilization parameters $\rho_a$, $\rho_b$ are, cf. section~\ref{sectTraceFEM},
\begin{equation} \label{scalings}
 \rho_a \sim h^{-1},\quad \rho_b\sim h.
\end{equation}
The main topic of this paper is an analysis of the discretization accuracy of the eigenproblems \eqref{P2hh}, \eqref{P2h}. We will present an analysis in a general abstract setting, which applies to discretization methods as the ones above.  

 Compared to the continuous problem \eqref{varev}, both discrete problems above have  two \emph{nonconformities} that are related to very different aspects:
 \begin{itemize} \item Instead of the space of \emph{tangential} vector fields $\bV_T$, used in the continuous problem, an \emph{extended} space is used, which allows \emph{non}tangential velocity components. For the TraceFEM, in addition we use functions that are defined not only on $\Gamma$ (or $\Gamma_h$), but in a small (volume) neighborhood $\Omega_h^\Gamma$. These function space extensions are the reason why one uses the \emph{penalization} $k_h(\cdot,\cdot)$ in \eqref{P2hh} and $k_h(\cdot,\cdot)$ and $s_h(\cdot,\cdot)$ in \eqref{P2h}. We interprete the stabilization $s_h(\cdot,\cdot)$ as a \emph{penalization} of variation of functions in the direction normal to the (approximate) surface. 
  \item 
   A very different source of nonconformity comes from the approximation of the exact surface: $\Gamma_h \approx \Gamma$. This we call a \emph{geometric inconsistency}. 
  \end{itemize} 
  We note that the issue related to the tangential condition does not occur in scalar Laplace-Beltrami eigenproblems.
 We will analyze the effect of both \emph{penalization} and \emph{geometric inconsistency} on the accuracy of the discrete eigenproblem.
 To clearly identify the effects of these two types of nonconformities on the discretization error 
we introduce an abstract analysis framework.
Application of the general results to the specific  discretizations \eqref{P2hh} and  \eqref{P2h} is treated in section~\ref{sectTraceFEM}. 

\section{Abstract Hilbert space setting} \label{sectHilbert}
We recall the usual framework of symmetric elliptic eigenvalue problems in Hilbert spaces. Let $ H \subset \hat H$ be two infinite dimensional Hilbert spaces, with $H$ compactly embedded in $\hat H$. Let $a:\, H\times H \to \Bbb{R}$ be a bounded symmetric elliptic bilinear form. For simplicity we equip $H$ with the energy norm $\Ecnorm{u}:=a(u,u)^\frac12$. The scalar product on $\hat H$ is denoted by $b(\cdot,\cdot)$, with norm denoted by $\|u\|=b(u,u)^\frac12$. The spectrum of $a(\cdot,\cdot)$ consists of an infinite sequence $0 < \lambda_1 \leq \lambda_2 \leq \ldots$ of eigenvalues of finite multiplicity, tending to infinity, and a corresponding sequence of eigenvectors $u_1,u_2, \ldots \in H$, such that
\begin{equation} \label{eqevproblem}
  a(u_k,u_\ell)= \lambda_k \delta_{k\ell},~~b(u_k,u_\ell) = \delta_{k\ell}, \quad k,\ell \in \Bbb{N},
\end{equation}
and $\delta_{k\ell}$ the Kronecker delta. 
The aim is to approximate eigenpairs $(\lambda_k,u_k)$, $k=1,\ldots, k_{\max}$ for a \emph{ fixed small number} $k_{\max}$. We introduce another space $\He$ that contains an infinite family of finite dimensional discretization spaces $(V_h)_{h >0}$. The paramater $h$ has strictly positive values with accumulation point 0.  To connect the spaces we assume  a linear injective extension (or embedding)  operator $\cE: H \to \He$. Furthermore there is a ``lifting'' operator that maps elements in $\He$ back to $H$. This operator may depend on the particular discretization and therefore we use a subscript $h$ and denote this lifting by $\cE_{h}^{-\ell} : \He \to H$. We assume that it  is a left inverse of $\cE$, 
i.e., $\cE_{h}^{-\ell} \cE={\rm id}_H$.  For the extension we use the notation $\cE u=:u^e$ (``extension/embedding'' of $u\in H$ in $\He$). The subspace of $\He$ consisting of extended eigenvectors is denoted by $U_j^e:={\rm span}\{u_1^e, \ldots,u_j^e\}$. 
\begin{remark} \label{remexample1}\rm
 For the vector-Laplace eigenvalue  problem we take $H=\bV_T$, $\hat H=\{\bu \in L^2(\Gamma)^3~|~ \bu \cdot \bn=0\,\}$, $a(\cdot,\cdot)$ as in \eqref{defa} and $b(\bu,\bv)= \int_\Gamma \bu \cdot \bv \, ds$. As  extended space $\He$  we take:
 \begin{align*}
   \He & = H^1(\Gamma_h)^3 \quad \text{for SFEM}, \\
   \He & = \{\, \bu \in H^1(\Omega_{\Theta}^\Gamma)^3~|~\bu_{|\Gamma} \in H^1(\Gamma)^3~ \text{and}~\bu_{|\Gamma_h} \in H^1(\Gamma_h)^3\,\} \quad \text{for TraceFEM}.
 \end{align*}
Note that we do not have a tangential condition for vector functions in $\He$.  For the extension operator $\cE$ we take the constant extension of functions on $\Gamma$ to $\Gamma_h$ (for SFEM) or to $\Omega_{\Theta}^\Gamma$ (for TraceFEM) along normals on $\Gamma$. The pull back operator $\cE_h^{-\ell} $ is given by $\cE_h^{-\ell} \bu = \bP (\bu_{|\Gamma_h})^{\ell}$, where  $(\bu_{|\Gamma_h})^{\ell}$ denotes the usual pull back operator used in the analysis of surface PDEs \cite{DEreview}, namely the constant extension of functions on $\Gamma_h$ along normals on $\Gamma$ to obtain values on $\Gamma$. Note that for the discretization spaces we then have $V_h \subset \He$.
\end{remark}

\subsection{Discrete eigenproblem with penalization and  inconsistency} \label{sectabstractdiscrete}
For the discretization of \eqref{eqevproblem} in $V_h$ we introduce the following general abstract setting. 
 We assume    bilinear forms  $ a_h(\cdot,\cdot)$, $ b_h(\cdot,\cdot)$ on $H^{\rm ex} \times \He$ of the form
 \begin{equation} \label{defblfA}
  \begin{split}
    a_h(u,v)&:= \tilde a_h(u,v) + k_a(u,v), \quad u,v \in \He, \\
   b_h(u,v)&:= \tilde  b_h(u,v) + k_b(u,v), \quad u,v \in \He,
  \end{split}
 \end{equation}
 with $\tilde a_h(\cdot,\cdot)$, $\tilde b_h(\cdot,\cdot)$, $k_a(\cdot,\cdot)$, $k_b(\cdot,\cdot)$ symmetric positive \emph{semi}definite bilinear forms on $\He$. In these bilinear forms the parts $\tilde a_h(\cdot,\cdot)$ and $\tilde b_h(\cdot,\cdot)$ are approximations of $a(\cdot,\cdot)$ and $b(\cdot,\cdot)$, respectively, and $k_a(\cdot,\cdot)$, $k_b(\cdot,\cdot)$, correspond to  penalizations. These penalty bilinear forms may depend on $h$, but to simplify the notation, this dependence is not made explicit. 
 The corresponding seminorms are denoted by 
 \[ a_h(\cdot,\cdot)=\Enorm{\cdot}^2, \quad b_h(\cdot,\cdot)=\|\cdot\|_h^2.
 \]
 We \emph{assume} that $a_h(\cdot,\cdot)$ and $b_h(\cdot,\cdot)$ are positive definite on $V_h\times V_h$. The discrete eigenproblem is as follows: determine $\tilde \lambda \in \Bbb{R}$, $ \tilde u \in V_h$ such that
\begin{equation} \label{discrev}
  a_h( \tilde u,\tilde v)= \tilde \lambda b_h(\tilde u,\tilde v) \quad \text{for all}~\tilde v \in V_h.
\end{equation}
The solutions of this problem form an orthogonal basis of eigenvectors $\tilde u_k\in V_h$, $k=1,\ldots,n:=\dim(V_h)$, with corresponding eigenvalues $0< \tlam_1 \leq \tlam_2 \leq \ldots \leq \tlam_n$, such that
\begin{equation} \label{approxEVPa}
 a_h(\tilde u_k,\tilde u_\ell)=\tlam_k \delta_{k\ell},\quad b_h(\tilde u_k,\tilde u_\ell)=\delta_{k\ell}, \quad 1 \leq k,\ell \leq n.
\end{equation}
We define the scaled eigenvectors $\hat u_k:= \tlam_k^{-\frac12} \tilde u_k$, hence, $\Enorm{\hat u_k}=1$, and use the notation $\tilde U_j={\rm span}\{\tilde u_1, \ldots, \tilde u_j\}$.
\begin{remark} \label{remexample2} \rm 
We briefly comment on how the discretizations treated in section~\ref{sectFEM} fit in this setting, cf. Remark~\ref{remexample1}. 
 For the SFEM \eqref{P2hh} we take $\tilde a_h(\cdot,\cdot)=a_h(\cdot,\cdot)$ as in \eqref{defall}, $\tilde b_h(\bu,\bv)= \int_{\Gamma_h} \bP_h \bu \cdot \bP_h \bv \, ds_h$ and the penalty bilinear forms are given by $k_a(\bu,\bv)=k_h(\bu,\bv)$ as in \eqref{defall}, $k_b(\bu,\bv)= \int_{\Gamma_h}(\bu \cdot \bn_h) (\bv \cdot \bn_h) ds_h$. This implies $\tilde b_h(\bu,\bv)+k_b(\bu,\bv)= B_h(\bu,\bv)$ as in \eqref{P2hh}. For the TraceFEM we use the same bilinear forms $\tilde a_h(\cdot,\cdot)$ and $\tilde b_h(\cdot,\cdot)$. The penalty bilinear forms are $k_a(\bu,\bv)=k_h(\bu,\bv) + \rho_a \int_{\Omega_\Theta^\Gamma} (\nabla \bu \bn_h) \cdot (\nabla \bv \bn_h)\, dx$, $k_b(\bu,\bv)=\int_{\Gamma_h} (\bu \cdot \bn_h) (\bv \cdot \bn_h)\, ds_h + \rho_b \int_{\Omega_\Theta^\Gamma} (\nabla \bu \bn_h) \cdot (\nabla \bv \bn_h)\, dx$. Note that the normal derivative volume stabilization terms ($s_h(\cdot,\cdot)$ above) are part of the penalty bilinear forms.
\end{remark}

In the analysis below we use several  orthogonal projections, that we now introduce.
The orthogonal projection w.r.t. the energy seminorm is denoted by $P_h:\, \He \to V_h$, and defined by
\begin{equation} \label{Phdef}
  a_h(w,v_h)=a_h(P_hw,v_h) \quad \text{for all}~v_h \in V_h.
\end{equation}
This projection is well-defined due to the assumption that $a_h(\cdot,\cdot)$ is a scalar product on the subspace $V_h$. 
For this projection we have the representation 
\begin{equation} \label{formprojection} P_hw= \sum_{i=1}^n a_h(w,\hat u_i) \hat u_i . \end{equation}
As stated above, we restrict to a small number $k_{\max} \ll n={\rm dim}(V_h)$ of the smallest eigenvalues and corresponding eigenvectors.  We assume that elements in the space $U_{k_{\max}}^e$ can be accurately approximated in the finite dimensional space $V_h$. More specifically, we assume that  ${\rm dim}(P_h(U_{k_{\max}}^e))=k_{max}$ holds. This implies
\begin{equation} \label{deftheta}
 \Theta_{h,j}:= \max_{w \in U_j^e} \frac{\Enorm{w- P_hw}}{\Enorm{w}} <1, \quad 1 \leq j \leq k_{\max}.
\end{equation}
The parameter $\Theta_{h,j}$ quantifies the \emph{approximability of the (extended) eigenvectors in the discretization space} $V_h$.

We also need orthogonal projections w.r.t. $a_h(\cdot,\cdot)$ and $b_h(\cdot,\cdot)$ onto $U_j^e$, $1 \leq j \leq \kmax$. For these to be well-defined we assume that $a_h(\cdot,\cdot)$ and $b_h(\cdot,\cdot)$ are inner products on $U_{\kmax}^e$. This property will follow from assumptions that are introduced further on, cf. Remark~\ref{remnorm}. An orthogonal projection $\Paj: \He \to U_j^e$ is uniquely defined via
\begin{equation} \label{projectionahj}
 \Enorm{v - \Paj v}= \min_{w \in U_j^e}\Enorm{v-w}, \quad v \in \He.
\end{equation}
Similarly we define $\Pbj$, the orthogonal projection w.r.t. $\|\cdot\|_h$ onto $U_j^e$. Note that for these projections we do not have an analogon of the formula \eqref{formprojection}, because $u_i^e$, $i=1,2,\ldots$, is not necessarily orthogonal w.r.t. $a_h(\cdot,\cdot)$, $b_h(\cdot,\cdot)$.
  
These projections are illustrated in Fig.~\ref{figprojection}.
\begin{figure}[ht!]
\begin{center}
\scalebox{.6}{


\begin{tikzpicture}
\draw [->] (-1, 0) -- (10, 0);
\draw [->] (0, -1) -- (0, 10);
\node [right] at (10, 0) { \Large{$V_h$} };

\draw [-] (0, 0) -- (5, 10);
\node [right] at (5, 10) { \Large{$U_j^e$} };

\fill [black] (3,6)  circle  (.05cm) node [left] {\Large{$P_{a_{h,j}}v$}};
\fill [black] (6,0)  circle  (.05cm) node [below] {\Large{$P_hv$}};
\fill [black] (3.5,7)  circle  (.05cm) node [left] {\Large{$P_{b_{h,j}}v$}};
\fill [black] (7.5,0)  circle  (.05cm) node [below] {\Large{$Q_hv$}};
\fill [black] (6,4.5)  circle  (.05cm) node [right] {\Large{$v$}};

\draw [dashed] (3, 6) -- (6, 4.5) -- (6, 0);
\draw [dotted] (3.5,7) -- (6, 4.5) -- (7.5, 0);

\draw [-] (6, 0.25) -- (6.25, 0.25) -- (6.25 ,0);
\draw [-] (3 + 0.25/2.2360679775, 6+0.25/2.2360679775*2) -- (3 + 0.25/2.2360679775 + 0.25/2.2360679775 *2, 6+0.25/2.2360679775*2-0.25/2.2360679775) -- (3 + 0.25/2.2360679775 *2, 6-0.25/2.2360679775);
\draw [-] (3.5 + 0.25/2.2360679775, 7+0.25/2.2360679775*2) .. controls (3.5 + 0.25/2.2360679775 + 0.25/1.41421356237, 7+0.25/2.2360679775*2 -0.25/1.41421356237) and (3.5 + 0.25/2.2360679775 + 0.25/1.41421356237, 7+0.25/2.2360679775*2 -0.25/1.41421356237) .. (3.5 + 0.25/1.41421356237, 7-0.25/1.41421356237);
\draw [-] (7.5+0.25,0) .. controls (7.5+0.25-0.25*0.31622,0.25*0.94868) and (7.5+0.25-0.25*0.31622,0.25*0.94868) .. (7.5-0.25*0.31622,0.25*0.94868);

\node[] at (8,9) {\Large{$H^{\rm ex}$}};
\draw [-] (9, 6.5) -- (9, 6) -- (9.5, 6);
\draw [-] (9, 6.25) -- (9.25, 6.25) -- (9.25, 6.0);
\node[align=left] at (11.2,6) {{\Large orthogonal}\\\Large w.r.t. $a_h(\cdot,\cdot)$};
\draw [-] (9-0.5/3.16227766017, 3+0.5*3/3.16227766017) -- (9, 3) -- (9.5, 3);
\draw [-] (9+0.25,3) .. controls (9+0.25-0.25*0.31622,3+0.25*0.94868) and (9+0.25-0.25*0.31622,3+0.25*0.94868) .. (9-0.25*0.31622,3+0.25*0.94868);
\node[align=left] at (11.2,3) {{\Large orthogonal}\\ \Large w.r.t. $b_h(\cdot,\cdot)$};

\end{tikzpicture}

\caption{Projections w.r.t. $a_h(\cdot,\cdot)$ and $b_h(\cdot,\cdot)$ onto $U_j^e$ and $V_h$\label{figprojection}}
\end{center}
\end{figure}
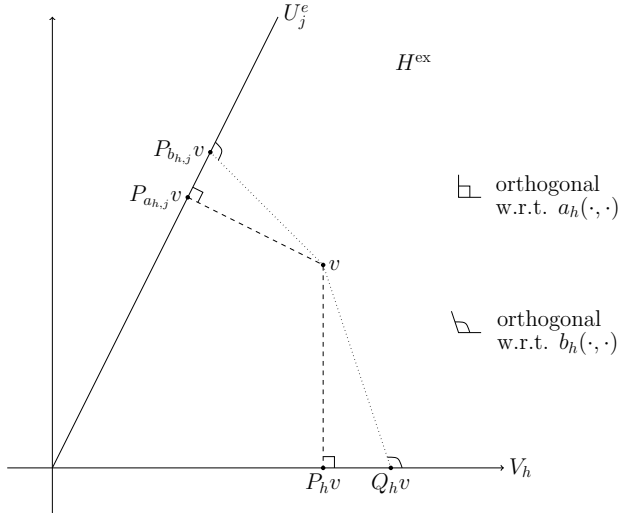

\section{Eigenvalue error analysis of discrete eigenproblem with penalization and  inconsistency} \label{sectRR}
In this section we present  an error analysis for the eigen\emph{value} approximation in the discretization \eqref{discrev}. An error analysis for eigen\emph{vectors} is given in section~\ref{secteigenvector}.
To analyze the errors coming from two very different nonconformities (penalization and  inconsistency) we first restrict to the situation with penalization only (section~\ref{sectpenal}) and then treat the general case (section~\ref{sectgeneral}).
\subsection{Analysis of discretization with penalization} \label{sectpenal}
 To avoid inconsistency, we introduce for the bilinear forms in \eqref{defblfA} the following (strongly) simplifying assumptions:
 \begin{equation} \label{conscond} \begin{split}
  \tilde a_h(u,v) & =a(\cE_h^{-\ell} u, \cE_h^{-\ell} v),~~ \tilde b_h(u,v)= a(\cE_h^{-\ell}  u, \cE_h^{-\ell} v), \quad\text{for all}~ u,v \in \He, \\
   k_a(u^e,v)&=k_a(v,u^e)=0 \quad \text{for all}~u \in H, v \in \He,\\
   k_b(u^e,v)&=k_b(v,u^e)=0 \quad \text{for all}~u \in H, v \in \He.
  \end{split}
 \end{equation}
This implies that $a_h(\cdot,\cdot)$, $b_h(\cdot,\cdot)$ are positive definite on the subspace $\cE(H)$.  
Hence, $\Enorm{\cdot}$ and  $\|\cdot\|_h$ are norms on the subspace $\cE(H)$. 
\begin{remark}  \label{remexample3} \rm 
 Perturbed versions of the SFEM and TraceFEM fulfill these assumptions. For the former we choose the discretization space $V_h$ as the space defined in \eqref{defVh1} lifted to the exact surface $\Gamma$ (by constant extension of functions along the normals $\bn$). Hence, $V_h \subset \He=H^1(\Gamma)^3 \supset \bV_T =H$. In all integrals the surface approximation $\Gamma_h$ is replaced by $\Gamma$ and $\bP_h$ and $\bn_h$ are replaced by $\bP$ and $\bn$. Clearly this yields a discretization that is not feasible in practice. We consider it here, because with this choice there are no geometric inconsistencies and the conditions \eqref{conscond} are satisfied. 
 A variant of the TraceFEM  without geometric errors is as follows. We take the finite element space $ \bV_{h,\Theta}^{k}$ as in ~\eqref{TraceFEspace}. Alternatively we can also take this space with geometry mapping $\Theta_h^{k_g}={\rm id}$. 
 In all integrals the surface approximation $\Gamma_h$ is replaced by $\Gamma$, and $\bP_h$ and $\bn_h$ are replaced by $\bP$ and $\bn$.
 Again, we obtain a method  that is not feasible.  One can check that all conditions introduced in \eqref{conscond} above are satisfied.
\end{remark}

Due to the assumptions \eqref{conscond}  the penalized bilinear forms $a_h(\cdot,\cdot)$ and  $b_h(\cdot,\cdot)$ have the following elementary property:
\begin{equation} a_h(u,v)=a(\CEl u,\CEl v), \quad b_h(u,v)=b(\CEl u,\CEl v)\quad \text{for all}~u,v \in \cE(H).\label{consprop}
\end{equation}
This implies that the extensions of the \emph{exact} eigenvectors are orthogonal w.r.t. $a_h(\cdot,\cdot)$ and $b_h(\cdot,\cdot)$:
\[
  a_h(u_k^e,u_\ell^e)= \lambda_k \delta_{k, \ell}, \quad b_h(u_k^e,u_\ell^e)=  \delta_{k, \ell}, ~k,\ell \in \Bbb{N}.
\]
Due to this, for the orthogonal projections $\Paj$, $\Pbj$, cf. \eqref{projectionahj} we have the representations
\begin{equation} \label{proja}
P_{a_h,j}w=\sum_{i=1}^j \lambda_i^{-1} a_h(w,u_i^e)u_i^e,\quad P_{b_h,j}w=\sum_{i=1}^j  b_h(w,u_i^e)u_i^e.
\end{equation}
Using $a_h(w,u_i^e)=a(\cE_h^{-\ell}w,u_i)=\lambda_i b(\cE_h^{-\ell}w,u_i)=\lambda_i b_h(w,u_i^e)$ we get the fundamental relation
\begin{equation} \label{fundrelation}
      P_{a_h,j}=P_{b_h,j}.
\end{equation}
In this situation we obtain error bounds for the eigenvalues that are the same as classical bounds from the literature for the Galerkin case, i.e., if there is no penalization and $V_h \subset H= \He$.
\begin{theorem} \label{thmEV}
We assume:
\begin{equation} \label{assstab}
 2\lambda_{k_{\max}} (1- \Theta_{k_{\max}}^2)^{-1} k_b(v,v) \leq k_a(v,v) \quad \text{for all}~v \in \tilde U_{k_{\max}}.
\end{equation}
 The following holds:
 \begin{equation} \label{resEVmain}
  0 \leq \frac{\tilde \lambda_j - \lambda_j}{\tilde \lambda_j} \leq \Theta_{h,j}^2, \quad 1 \leq j \leq k_{\max}.
 \end{equation}
\end{theorem}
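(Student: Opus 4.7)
This bound is the classical Galerkin eigenvalue estimate recast in the nonconforming framework of Section~\ref{sectHilbert} under the consistency conditions \eqref{conscond}. My plan is to prove the two inequalities separately via the Courant--Fischer min-max principle, applied once in $V_h$ (for the upper bound on $\tilde\lambda_j$) and once in $H$ (for the lower bound $\tilde\lambda_j \ge \lambda_j$), with the transport between the two spaces carried out by $P_h$ and $\CEl$.

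For the upper bound $\tilde\lambda_j(1-\Theta_{h,j}^2)\le\lambda_j$, I use the $j$-dimensional test subspace $P_h(U_j^e)\subset V_h$, which genuinely has dimension $j$ because $\Theta_{h,j}<1$. For $w=\sum_{k=1}^j c_k u_k^e\in U_j^e$, the $a_h$-orthogonality of $P_h$ combined with the consistency identity \eqref{consprop} yields
\[
a_h(P_h w,P_h w)\le a_h(w,w)=a(\CEl w,\CEl w)\le\lambda_j\, b(\CEl w,\CEl w)=\lambda_j\, b_h(w,w),
\]
reducing the task to the reverse inequality $b_h(P_h w,P_h w)\ge(1-\Theta_{h,j}^2)\, b_h(w,w)$. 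Here I would exploit the fundamental identity $P_{a_h,j}=P_{b_h,j}$ from \eqref{fundrelation} together with the penalty-strength assumption \eqref{assstab} to turn the cross terms arising from a $b_h$-Pythagoras expansion of $b_h(w-P_hw,w-P_hw)$ into controlled multiples of $\Theta_{h,j}^2\, b_h(w,w)$.

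For the lower bound $\tilde\lambda_j\ge\lambda_j$, I apply the continuous min-max principle to the test subspace $\CEl(\tilde U_j)\subset H$, first verifying that $\dim\CEl(\tilde U_j)=j$ by invoking $P_{a_h,j}=P_{b_h,j}$ and $\Theta_{h,\kmax}<1$. For $\tilde v\in\tilde U_j$ one has $a_h(\tilde v,\tilde v)\le\tilde\lambda_j\, b_h(\tilde v,\tilde v)$ by definition of $\tilde U_j$, and then
\[
\frac{a(\CEl\tilde v,\CEl\tilde v)}{b(\CEl\tilde v,\CEl\tilde v)}=\frac{a_h(\tilde v,\tilde v)-k_a(\tilde v,\tilde v)}{b_h(\tilde v,\tilde v)-k_b(\tilde v,\tilde v)}.
\]
The penalty-strength hypothesis \eqref{assstab} (applied on $\tilde U_{\kmax}\supset\tilde U_j$) is precisely what ensures this ratio stays $\le\tilde\lambda_j$, whence the continuous min-max gives $\lambda_j\le\tilde\lambda_j$.

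The main obstacle is the denominator estimate in the upper bound. Because $P_h$ is only $a_h$-orthogonal—not $b_h$-orthogonal—a direct $b_h$-expansion of $b_h(P_h w,P_h w)$ produces cross terms that, without further structure, are out of reach. The fundamental relation $P_{a_h,j}=P_{b_h,j}$, which hinges on the strong consistency assumptions \eqref{conscond}, is the critical tool that lets these cross terms be expressed back in terms of $\Theta_{h,j}$, while the penalty-strength hypothesis \eqref{assstab} absorbs the residual $k_b$-contribution to keep a clean $(1-\Theta_{h,j}^2)$ factor. Once this estimate is in hand, the two min-max arguments combine into the stated two-sided bound.
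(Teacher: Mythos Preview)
Your lower-bound plan is essentially the paper's argument: push $\tilde U_j$ to $H$ via $\CEl$, use the penalty-strength hypothesis \eqref{assstab} (which, combined with the already-established upper bound, gives $2\tilde\lambda_{\kmax}k_b\le k_a$ on $\tilde U_{\kmax}$) to obtain injectivity of $\CEl$ on $\tilde U_j$ and the mediant inequality
\[
\frac{a(\CEl v,\CEl v)+k_a(v,v)}{b(\CEl v,\CEl v)+k_b(v,v)}\ge \frac{a(\CEl v,\CEl v)}{b(\CEl v,\CEl v)},
\]
then apply Courant--Fischer in $H$. One remark: your stated route to injectivity via ``$P_{a_h,j}=P_{b_h,j}$ and $\Theta_{h,\kmax}<1$'' is not the right tool; injectivity comes directly from the penalty inequality as in the paper.

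The upper-bound plan, however, has a genuine gap. The intermediate inequality you reduce to,
\[
b_h(P_h w,P_h w)\ge (1-\Theta_{h,j}^2)\,b_h(w,w)\qquad (w\in U_j^e),
\]
is \emph{false} already in the conforming Galerkin case $k_a=k_b=0$, $a_h=a$, $b_h=b$. Take $H=\mathbb{R}^2$, $a(u,v)=u_1v_1+\lambda_2 u_2v_2$ with $\lambda_2>1$, $b$ the Euclidean inner product, and $V_h=\mathrm{span}\{(\cos\theta,\sin\theta)\}$. For $w=e_1$ one computes $1-\Theta_{h,1}^2=\cos^2\theta/(\cos^2\theta+\lambda_2\sin^2\theta)$ and $b(P_hw,P_hw)=\cos^2\theta/(\cos^2\theta+\lambda_2\sin^2\theta)^2$, so your inequality becomes $\cos^2\theta+\lambda_2\sin^2\theta\le 1$, which fails. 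Invoking \eqref{assstab} cannot rescue this: that hypothesis lives on $\tilde U_{\kmax}$, not on $U_j^e$ or $P_h(U_j^e)$, and the counterexample has no penalties at all. The paper avoids this trap by parametrizing in the opposite direction: write $\lambda_j=\max_{v\in U_j^e}a_h(v,v)/b_h(v,v)$, reparametrize over $W_j:=P_h(U_j^e)$ via the isomorphism $P_{a_h,j}:W_j\to U_j^e$, and factor as
\[
\lambda_j=\max_{w\in W_j}\frac{a_h(w,w)}{b_h(w,w)}\cdot\frac{\Enorm{P_{a_h,j}w}^2}{\Enorm{w}^2}\cdot\frac{\|w\|_h^2}{\|P_{a_h,j}w\|_h^2}.
\]
Now the $(1-\Theta_{h,j}^2)$ factor comes from the \emph{$a_h$-norm} ratio (symmetry of the gap between the equidimensional spaces $W_j$ and $U_j^e$), while the $b_h$-ratio is handled in one stroke by the identity $P_{a_h,j}=P_{b_h,j}$, which gives $\|P_{a_h,j}w\|_h=\|P_{b_h,j}w\|_h\le\|w\|_h$. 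No appeal to \eqref{assstab} is needed for the upper bound.
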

\begin{proof}
For deriving the upper bound we use \eqref{consprop} and standard arguments, e.g.~\cite{Dyakonov} based on the Courant-Fischer eigenvalue characterization. Take $1 \leq j \leq k_{\max}$. Define $P_h(U_j^e):= W_j \subset V_h$. We have ${\rm dim}(W_j)=j$ and $\Paj:   W_j \to U_j^e$ is an isomorphism. Using this we get
\begin{align*}
  \lambda_j &= \max_{v \in U_j} \frac{a(v,v)}{b(v,v)} =\max_{v \in U_j^e} \frac{a_h(v,v)}{b_h(v,v)}\\ &= \max_{w \in W_j} \frac{a_h(\Paj w,\Paj w)}{b_h(\Paj w, \Paj w)}=
   \max_{w \in W_j} \frac{a_h( w, w)}{b_h( w,  w)} \cdot \frac{\Enorm{\Paj w}^2}{\Enorm{w}^2} \cdot \frac{\|w\|_h^2}{\|\Paj w\|_h^2}.
\end{align*}
Elementary properties of orthogonal projections yield $\frac{\Enorm{\Paj w}^2}{\Enorm{w}^2} \geq 1- \Theta_{h,j}^2$. From \eqref{fundrelation} we
get $\|\Paj w\|_h = \|\Pbj w\|_h \leq \|w\|_h$. Using these estimates and the Courant-Fischer theorem we get
\[
   \lambda_j \geq (1- \Theta_{h,j}^2)  \max_{w \in W_j} \frac{a_h( w, w)}{b_h( w,  w)} \geq (1- \Theta_{h,j}^2) \tilde \lambda_j,
\]
which yields the upper bound in \eqref{resEVmain}. We now derive the lower bound. First note that from \eqref{assstab} and the upper bound in  \eqref{resEVmain} it follows that
\begin{equation} \label{assstab1}
 2 \tilde \lambda_{k_{\max}} k_b(v,v) \leq k_a(v,v) \quad \text{for all}~v \in \tilde U_j.
\end{equation}
We  have
 \[
   \tilde \lambda_j = \max_{v \in \tilde U_j} \frac{a_h(v,v)}{b_h(v,v)} = \max_{v \in \tilde U_j} \frac{a(\CEl v ,\CEl v)+ k_a(v,v)}{b(\CEl v,\CEl v) + k_b(v,v)} .
 \]
We now show that $\cE_h^{-\ell}: \tilde U_j \to H$ is injective. Take $0 \neq v \in \tilde U_j$. Assume that $\cE_h^{-\ell} v =0$. This implies
\[
 \tilde \lambda_{k_{max}} \geq \tilde \lambda_j \geq \frac{a(\cE_h^{-\ell} v ,\cE_h^{-\ell} v)+ k_a(v,v)}{b(\cE_h^{-\ell} v,\cE_h^{-\ell} v) + k_b(v,v)} = \frac{ k_a(v,v)}{k_b(v,v)},
\]
hence, $k_b(v,v) > 0$ and $\tilde \lambda_{k_{max}} k_b(v,v) \geq k_a(v,v)$, which contradicts \eqref{assstab1}. For arbitrary $v \in \tilde U_j$ we have $a_h(v,v) \leq \tilde \lambda_j b_h(v,v)$. This implies
\[
  a(\cE_h^{-\ell} v,\cE_h^{-\ell} v) + k_a(v,v) - \tilde \lambda_j k_b(v,v) \leq \tilde \lambda_j b(\cE_h^{-\ell} v,\cE_h^{-\ell} v), 
\]
which, due to \eqref{assstab1}, implies
\begin{equation} \label{hulp4}
  a(\cE_h^{-\ell} v,\cE_h^{-\ell} v)\leq \tilde \lambda_j b(\cE_h^{-\ell} v,\cE_h^{-\ell} v).
\end{equation}
Using this and \eqref{assstab1} yields, for $v \in \tilde U_j$, with $k_b(v,v) >0$:
\[
  \frac{a(\CEl v,\CEl v)}{b(\CEl v,\CEl v)} \leq \tilde \lambda_j \leq \tilde \lambda_{k_{\max}} < \frac{k_a(v,v)}{k_b(v,v)},
\]
and thus for all $v \in \tilde U_j$, $v \neq 0$:
\[
  \frac{a(\cE_h^{-\ell} v,\cE_h^{-\ell} v) +k_a(v,v)}{b(\cE_h^{-\ell} v,\cE_h^{-\ell} v)+k_b(v,v)} \geq \frac{a(\cE_h^{-\ell} v,\cE_h^{-\ell} v) }{b(\cE_h^{-\ell} v,\cE_h^{-\ell} v)}  
\]
holds.
Using this, the injectivity of $\cE_h^{-\ell}$ on $\tilde U_j$ and the Courant-Fischer theorem we finally obtain
\[
  \tilde \lambda_j = \max_{v \in \tilde U_j} \frac{a_h(v,v)}{b_h(v,v)} = \max_{v \in \tilde U_j} \frac{a(\cE_h^{-\ell} v,\cE_h^{-\ell} v) +k_a(v,v)}{b(\cE_h^{-\ell} v,\cE_h^{-\ell} v)+k_b(v,v)} \geq \max_{w \in \cE_h^{-\ell} (\tilde U_j)} \frac{a(w,w) }{b(w,w)} \geq \lambda_j, 
\]
 which is the lower bound in \eqref{resEVmain}.
\end{proof}

A essential assumption for obtaining the (satisfactory) eigenvalue estimates in \eqref{resEVmain} is the penalization condition \eqref{assstab}. This condition requires that the penalization used in the $a_h(\cdot,\cdot)$ part of the discrete eigenproblem ``dominates'' the penalization used in the $b_h(\cdot,\cdot)$ part, cf. also the discussion in section~\ref{sectdiscussion}. 

\subsection{Analysis of a method with penalization and consistency errors} \label{sectgeneral}
In this section we generalize the analysis presented in the previous section in the sense that we consider a larger class of bilinear forms in    \eqref{defblfA} that in particular allows certain inconsistencies.
In the error analysis, besides the approximation quality quantity $\Theta_{h,j}$ defined above, we use \emph{consistency  parameters} introduced in the following assumptions.
\begin{assumption}[Consistency] \label{ass1} We assume that there are $\alpha_{h,i} <1$,  $\beta_{h,i} <1$, $i=1,2$, with $\alpha_{h,i} \downarrow 0,~\beta_{h,i}\downarrow 0$ for $h\downarrow 0$, such that the following holds for all $u,v \in  U_{\kmax}^e$:
\begin{align}
 \left| \tilde a_h( u, v) - a(\CEl u,\CEl v)\right| & \leq \alpha_{h,1} \Enorm{u}\Enorm{v},  \label{alpha1}\\
 | k_a( u, v)| & \leq \alpha_{h,2}\Enorm{u}\Enorm{v} ,  \label{alpha2}\\
 \left| \tilde b_h( u, v) - b(\CEl u,\CEl v)\right| & \leq \beta_{h,1} \|u\|_h\|v\|_h,  \label{beta1}\\
 | k_b( u, v)| & \leq \beta_{h,2} \|u\|_h \|v\|_h.  \label{beta2}
\end{align}
\end{assumption}

These conditions imply, with $\alpha_h:=\alpha_{h,1}+\alpha_{h,2}$, $\beta_h:=\beta_{h,1}+\beta_{h,2}$, the following inequalities for arbitrary $u,v \in  U_{\kmax}^e$:
\begin{align}
 \left| a_h( u, v) - a(\CEl u,\CEl v)\right| & \leq \alpha_h \Enorm{u}\Enorm{v},  \label{deltaa}\\
 \left| b_h ( u, v) - b(\CEl u, \cE_h^{-\ell}v) \right| & \leq \beta_h \|u^e\|_{h} \|v\|_h, \label{deltab} 
\end{align}
and $\alpha_h \downarrow 0$, $\beta_h \downarrow 0$ for $h\downarrow 0$. 
The consistency parameters $\alpha_h$, $\beta_h$ play a key role in the analysis below. We also need bounds for the consistency error if one of the two arguments $u,v$ is in the larger space $V_h+U_{\kmax}^e$. This is quantified in the following assumption.
\begin{assumption}[Consistency] \label{ass2}
 We assume that there are $\tilde \alpha_h<1$, $\tilde \beta_h<1$, with $\tilde \alpha_{h} \downarrow 0,~\tilde \beta_{h}\downarrow 0$ for $h\downarrow 0$, such that the following holds for all $u \in  U_{\kmax}^e$, $v \in V_h+U_{\kmax}^e$:
\begin{align}
 \left| a_h( u, v) - a(\CEl u,\CEl v)\right| & \leq \tilde \alpha_h\Enorm{u}\Enorm{v},  \label{deltaA}\\
 \left| b_h ( u, v) - b(\CEl u, \CEl v) \right| & \leq \tilde \beta_h\|u\|_{h} \|v\|_h. \label{deltaB} 
\end{align}
\end{assumption}
Note that in the simpified setting treated in section~\ref{sectpenal} we have $\tilde \alpha_h=\tilde \beta_h= \alpha_h=\beta_h=0$. Due to the fact that in Assumption~\ref{ass2} we allow $v \in V_h$, the consistency parameters $\tilde \alpha_h$ and $\tilde \beta_h$ can be significantly larger that $\alpha_h$ and $\beta_h$, respectively, cf. also section~\ref{sectTraceFEM}.  \emph{In the remainder we assume that Assumptions~\ref{ass1} and \ref{ass2} are satisfied}.
To simplify the presentation we assume, without loss of generality, that
\[
  \alpha_h<\tfrac12,~~\beta_h<\tfrac12,~~\tilde \alpha_h<\tfrac12,~~\tilde \beta_h<\tfrac12
\]
holds.  
 \begin{remark} \label{rembest}
 \rm  Consider the special case $H=\He$, in which we can avoid penaliziation, i.e. $k_a(\cdot,\cdot)=k_b(\cdot,\cdot)=0$. Furthermore assume $V_h=H$, i.e., there are no discretization errors.  Define  $a_h(\cdot,\cdot):=(1+\delta )a(\cdot,\cdot)$, $b_h(\cdot,\cdot):=(1-\delta)b(\cdot,\cdot)$ for $0 <\delta \ll 1$.  The eigenvalues of the inconsistent generalized eigenvalue problem with bilinear forms $a_h(\cdot,\cdot)$ and $b_h(\cdot,\cdot)$ are given by $\tilde \lambda_j=\lambda_j(1+\delta)(1-\delta)^{-1}=\lambda_j(1+2\delta +\mathcal{O}(\delta^2))$.  The (best possible) consistency parameters in \eqref{deltaa}-\eqref{deltab} are $\alpha_h=\beta_h=\frac{\delta}{1-\delta}=\delta +\mathcal{O}(\delta^2)$. This special case shows that (general) bounds for the eigenvalue error $|\lambda_j-\tilde \lambda_j|$ can not be better than of order $\mathcal{O}(\alpha_h+\beta_h)$.
\end{remark}

 Using \eqref{deltaa}-\eqref{deltab} we obtain a few easy corollaries that we will use in the analysis below.
\begin{corollary}\label{lemCourantF}
   Take $v \in U_{k_{\max}}^e$, $v \neq 0$. Then $a_h(v,v) >0$, $b_h(v,v)>0$ and  the following holds:
  \begin{align}
   \max \left\{ \left|1- \frac{a_h(v,v)}{a(\CEl v,\CEl v)} \right|\, , \, \left|1- \frac{a( \CEl v, \CEl v)}{a_h(v,v)}\right| \right\} & \leq \frac{\alpha_h}{1-\alpha_h} \leq 2 \alpha_h, \label{resCF1} \\
    \max \left\{ \left|1- \frac{b_h(v,v)}{b(\CEl v,\CEl v)} \right|\, , \, \left|1- \frac{b( \CEl v,\CEl v)}{b_h(v,v)}\right| \right\} & \leq \frac{\beta_h}{1-\beta_h} \leq 2 \beta_h. \label{resCF2}
  \end{align}
\end{corollary}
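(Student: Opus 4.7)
The plan is to derive both chains of inequalities directly from Assumption~\ref{ass1}, specialized to $u=v$. Since $v \in U_{k_{\max}}^e$, inequality \eqref{deltaa} gives
\[
 |a_h(v,v) - a(\CEl v,\CEl v)| \leq \alpha_h \Enorm{v}^2 = \alpha_h\, a_h(v,v),
\]
and analogously \eqref{deltab} (applied with $u=v$) yields $|b_h(v,v) - b(\CEl v,\CEl v)| \leq \beta_h\, b_h(v,v)$. These two scalar inequalities will supply everything we need; the work is simply to divide by the appropriate quantity and to justify that the denominators are strictly positive.

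First I would verify positivity of $a_h(v,v)$ and $b_h(v,v)$. Because $\cE$ is injective and $\CEl\cE=\mathrm{id}_H$, the restriction $\CEl\big|_{U_{k_{\max}}^e}$ is a bijection onto $U_{k_{\max}}$; hence $v\neq 0$ implies $\CEl v\neq 0$, and ellipticity of $a(\cdot,\cdot)$ and $b(\cdot,\cdot)$ gives $a(\CEl v,\CEl v)>0$ and $b(\CEl v,\CEl v)>0$. Rewriting the first displayed inequality as
\[
 (1-\alpha_h)\,a_h(v,v) \leq a(\CEl v,\CEl v) \leq (1+\alpha_h)\,a_h(v,v),
\]
and using $\alpha_h<\tfrac12$, both outer terms are strictly positive, so $a_h(v,v)\geq a(\CEl v,\CEl v)/(1+\alpha_h)>0$. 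The same reasoning with $\beta_h$ in place of $\alpha_h$ gives $b_h(v,v)>0$.

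Next I would obtain the four ratio bounds. Dividing $|a_h(v,v)-a(\CEl v,\CEl v)|\leq \alpha_h\, a_h(v,v)$ by $a_h(v,v)$ yields
\[
 \left|1-\frac{a(\CEl v,\CEl v)}{a_h(v,v)}\right|\leq \alpha_h \leq \frac{\alpha_h}{1-\alpha_h}.
\]
Dividing instead by $a(\CEl v,\CEl v)$ and using the two-sided bound above to estimate $a_h(v,v)/a(\CEl v,\CEl v)\leq 1/(1-\alpha_h)$ gives
\[
 \left|1-\frac{a_h(v,v)}{a(\CEl v,\CEl v)}\right|\leq \alpha_h\cdot\frac{a_h(v,v)}{a(\CEl v,\CEl v)}\leq \frac{\alpha_h}{1-\alpha_h}.
\]
The final inequality $\alpha_h/(1-\alpha_h)\leq 2\alpha_h$ is immediate from $\alpha_h<\tfrac12$, which establishes \eqref{resCF1}. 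Repeating the identical three lines with $(a_h,a,\alpha_h,\Enorm{\cdot})$ replaced by $(b_h,b,\beta_h,\|\cdot\|_h)$, and using \eqref{deltab} in place of \eqref{deltaa}, produces \eqref{resCF2}.

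There is no serious obstacle here; the only point that requires a moment of care is that the consistency inequalities \eqref{deltaa}, \eqref{deltab} are stated with the norms $\Enorm{\cdot}$ and $\|\cdot\|_h$ (defined as the square roots of $a_h(\cdot,\cdot)$ and $b_h(\cdot,\cdot)$) on the right, so the bound is automatically relative to the discrete quantity, which is what allows the clean algebraic manipulations above.
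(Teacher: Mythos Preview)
Your proof is correct and follows essentially the same approach as the paper: both start from \eqref{deltaa} with $u=v$ to obtain $|a_h(v,v)-a(\CEl v,\CEl v)|\leq \alpha_h\,a_h(v,v)$, use $a(\CEl v,\CEl v)>0$ (equivalently, $\CEl v\neq 0$) to deduce $a_h(v,v)>0$, and then do elementary algebra with $\alpha_h<\tfrac12$. The paper merely says ``this easily yields the result~\eqref{resCF1}'' where you have written out the two divisions explicitly; your added observation that the right-hand side is already relative to the discrete quantity is exactly the point that makes the manipulation clean.
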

\begin{proof} Take $v \in  U_{\kmax}^e $, $v \neq 0$. The estimate \eqref{deltaa} takes the form
\[
 \left| a_h(v, v) - a(\CEl v,\CEl v)\right| \leq \alpha_ha_h(v, v).
\]
From this and $a(\CEl v,\CEl v) > 0$ it follows that $a_h(v,v)>0$ holds, and
using $\alpha_h\leq \tfrac12$ this  easily yields the result \eqref{resCF1}. The result \eqref{resCF2} can be derived analogously.
 \end{proof}
\ \\
\begin{remark} \label{remnorm} \rm The results above imply that $\Enorm{\cdot}$, $\|\cdot\|_h$ are norms on $U_{\kmax}^e$.
\end{remark}

\begin{corollary} \label{CorolE}
 For the extension operator $\cE$ and its left inverse the following bounds hold:
 \begin{align}
  \Enorm{\cE u}&  \leq (1+\alpha_h) \Ecnorm{u},~~\|\cE u\|_h \leq (1+\beta_h)\|u\|\quad \text{for all}~u \in U_{\kmax}, \label{boundE}\\
  \Ecnorm{\cE_h^{-\ell} v}&  \leq (1+\alpha_h) \Enorm{v},~~\|\cE_h^{-\ell} v\| \leq (1+\beta_h)\|v\|_h\quad \text{for all}~v \in U_{\kmax}^e. \label{boundEinv}
 \end{align}
 \end{corollary}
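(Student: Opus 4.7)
The plan is to obtain all four bounds as direct consequences of Corollary~5.3 (the estimates \eqref{resCF1}–\eqref{resCF2}), combined with the defining property $\CEl \cE = \text{id}_H$ of the lifting operator and the elementary numerical inequality
\[
\frac{1}{\sqrt{1-x}} \leq 1+x \quad \text{for}~x \in [0,\tfrac12],
\]
which is verified by noting that $(1+x)^2(1-x) = 1 + x - x^2 - x^3 \geq 1$ on that interval. Since $\alpha_h, \beta_h < \tfrac12$ by our standing assumption, this inequality applies.

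For the bounds involving $\cE$: fix $u \in U_{\kmax}$ and set $v := \cE u \in U_{\kmax}^e$, so that Corollary~5.3 applies to $v$ and $\CEl v = \CEl \cE u = u$. The estimate \eqref{resCF1} then yields
\[
  a_h(\cE u, \cE u) \leq \frac{1}{1-\alpha_h}\, a(u,u),
\]
and taking square roots together with the numerical inequality above gives $\Enorm{\cE u} \leq (1+\alpha_h)\Ecnorm{u}$. The second inequality in \eqref{boundE} follows identically from \eqref{resCF2} with $\beta_h$ in place of $\alpha_h$.

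For the bounds involving $\CEl$: given $v \in U_{\kmax}^e$, apply the second halves of \eqref{resCF1} and \eqref{resCF2} directly to $v$ to obtain
\[
  a(\CEl v, \CEl v) \leq \frac{1}{1-\alpha_h}\, a_h(v,v), \qquad b(\CEl v, \CEl v) \leq \frac{1}{1-\beta_h}\, b_h(v,v),
\]
and take square roots, again invoking the inequality $\frac{1}{\sqrt{1-x}} \leq 1+x$. This yields \eqref{boundEinv}. No step here presents a serious obstacle — the only point requiring slight care is the replacement of the sharper constant $(1-\alpha_h)^{-1/2}$ (resp.\ $(1-\beta_h)^{-1/2}$) by the cleaner $1+\alpha_h$ (resp.\ $1+\beta_h$), which is valid precisely because the simplifying assumption $\alpha_h, \beta_h < \tfrac12$ was made at the end of Section~5.2.
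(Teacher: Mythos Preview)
Your proof is correct and follows essentially the same approach as the paper: both derive all four bounds directly from Corollary~\ref{lemCourantF} by setting $v=\cE u$ (using $\CEl\cE u=u$) and then applying an elementary square-root inequality. The only cosmetic difference is that the paper uses the looser bound $2\alpha_h$ from \eqref{resCF1} to obtain $\Enorm{\cE u}\leq\sqrt{1+2\alpha_h}\,\Ecnorm{u}\leq(1+\alpha_h)\Ecnorm{u}$, whereas you use the sharper $\frac{\alpha_h}{1-\alpha_h}$ and the inequality $\frac{1}{\sqrt{1-x}}\leq 1+x$; both routes are valid and equivalent in spirit.
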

\begin{proof}
Take $v=\cE u =u^e$ in  \eqref{resCF1} and use $\cE_h^{-\ell}\cE u=u$.  We then get 
\[
  \left| 1- \frac{\Enorm{\cE u}^2}{\Ecnorm{u}^2}\right|\leq 2 \alpha_h.
\]
This yields $\Enorm{\cE u} \leq \sqrt{1+2\alpha_h}\Ecnorm{u}$, which implies the first estimate in \eqref{boundE}. The other results can be derived with the same arguments. 
 \end{proof}
\ \\
On $U_j^e$ the equivalence of the norms $\Enorm{\cdot}$ and $\|\cdot\|_h$ can be made explicit as follows. Using $\lambda_1 b(\CEl w,\CEl w) \leq a(\CEl w,\CEl w) \leq \lambda_j b(\CEl w,\CEl w)$ for all $w \in  U_j^e$ and the estimates in Corollary~\ref{CorolE} we obtain, for $1 \leq j \leq \kmax$,
\begin{equation} \label{normeq}
 \tfrac14 \lambda_1 \|w\|_h^2 \leq \Enorm{w}^2 \leq 4 \lambda_j \|w\|_h^2, \quad \text{for all}~w \in U_j^e.
\end{equation}
The lower inequality in \eqref{normeq} is a Friedrich's type estimate, which is an analogon of the estimate $\lambda_1 \|u\|^2 \leq \Ecnorm{u}^2 $ for all $u \in H$, which holds for the eigenvalue problem \eqref{eqevproblem}. We also need such an estimate  on the discretization space $V_h$. Such a  ``Friedrich's constant'' is introduced in the next assumption.
\begin{assumption}[Friedrich's inequality] \label{Friedrichs}
 We assume that there is $c_F >0$ independent of $h$ such that
 \begin{equation} \label{friedrichs}
    \|v\|_h \leq  c_F \Enorm{v} \quad \text{for all}~v \in V_h + U_{\kmax}^e.
 \end{equation}
\end{assumption}

 Due to the fact that we do not have the property \eqref{consprop},  for the projections $\Paj$ and $\Pbj$ defined as in \eqref{projectionahj}, we  do \emph{not} have a representation as in \eqref{proja}. Furthermore, the relation \eqref{fundrelation} does in general \emph{not} hold.
 In the analysis of in the previous section, for the case without  inconsistencies, we used \eqref{fundrelation} to derive the estimate  $\|\Paj w\|_h = \|\Pbj w\|_h \leq \|w\|_h$, i.e., the projection $\Paj $ has operator norm 1 also w.r.t. $\|\cdot\|_h$. We need a similar estimate for the case with inconsistencies considered in this section. This is derived in the following lemma.
 
\begin{lemma} \label{distP}
Take $v  \in V_h$, $v \neq 0$, and $1 \leq j \leq \kmax$. Assume that $\Enorm{v - \Paj v} \leq \tfrac12 \Enorm{v}$ holds.  With 
\begin{equation} \label{delv} \delta_{h,v}:=\frac{8}{\sqrt{3}} c_F \lambda_{\kmax}^\frac12(\tilde \alpha_h+ \tilde \beta_h)\frac{\Enorm{v - \Paj v}}{\Enorm{v}}
\end{equation}
the following estimates hold:
\begin{equation} \label{estPa1}
 (1-\delta_{h,v})\|P_{a_h,j}v\|_h  \leq  \|\Pbj v\|_h \leq (1+ \delta_{h,v})\|P_{a_h,j}v\|_h. 
 \end{equation}
\end{lemma}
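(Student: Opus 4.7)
The plan is to set $\phi := \Paj v$, $\psi := \Pbj v$, and $w := \phi-\psi \in U_j^e$, and to use the triangle inequality $|\|\phi\|_h - \|\psi\|_h| \le \|w\|_h$ to reduce \eqref{estPa1} to the single estimate $\|w\|_h \le \delta_{h,v}\|\phi\|_h$. The starting identity is $\|w\|_h^2 = -b_h(w,v-\phi)$, obtained by expanding $b_h(w,w) = b_h(w,\phi)-b_h(w,\psi)$ and using $b_h(w,v)=b_h(w,\psi)$, which holds because $w \in U_j^e$ and $\psi=\Pbj v$.

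To estimate $b_h(w,v-\phi)$ I would first invoke \eqref{deltaB}---applicable since $w\in U_{\kmax}^e$ and $v-\phi\in V_h+U_{\kmax}^e$---to pass from $b_h(w,v-\phi)$ to $b(\CEl w,\CEl(v-\phi))$ at the cost of $\tilde\beta_h\|w\|_h\|v-\phi\|_h$. The key step is then to exploit the spectral structure of $U_j$ in order to convert this $b$-pairing into an $a$-pairing, so that the $a_h$-orthogonality of $v-\phi$ to $U_j^e$ can be brought to bear. Concretely, I would expand $\CEl w = \sum_{i=1}^j d_i u_i$ and introduce the twisted auxiliary element $\tilde w := \sum_{i=1}^j \lambda_i^{-1}d_i u_i \in U_j$; the eigenvalue relation $a(u_i,g)=\lambda_i b(u_i,g)$ for $g\in H$ then yields the identity $b(\CEl w,g)=a(\tilde w,g)$ for every $g\in H$. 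Applying this with $g=\CEl(v-\phi)$ and then invoking \eqref{deltaA}, together with $a_h(v-\phi,\cE\tilde w)=0$ (which holds since $\cE\tilde w\in U_j^e$ and $\phi=\Paj v$), yields $|b(\CEl w,\CEl(v-\phi))|\le \tilde\alpha_h\Enorm{v-\phi}\Enorm{\cE\tilde w}$. Combined with the consistency error from \eqref{deltaB}, this produces $\|w\|_h^2 \le \tilde\alpha_h\Enorm{v-\phi}\Enorm{\cE\tilde w}+\tilde\beta_h\|v-\phi\|_h\|w\|_h$.

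The remainder is bookkeeping. Corollary~\ref{CorolE} together with the definition of $\tilde w$ gives $\Enorm{\cE\tilde w} \le (1+\alpha_h)(1+\beta_h)\lambda_1^{-1/2}\|w\|_h$ (using $\Ecnorm{\tilde w}^2=\sum\lambda_i^{-1}d_i^2\le\lambda_1^{-1}\|\CEl w\|^2$); Friedrich's inequality \eqref{friedrichs} gives $\|v-\phi\|_h \le c_F\Enorm{v-\phi}$; and applying \eqref{friedrichs} to the specific element $u_1^e\in U_{\kmax}^e$ combined with Corollary~\ref{lemCourantF} forces $c_F$ to be at least a constant multiple of $\lambda_1^{-1/2}$, so that both the $\tilde\alpha_h$ and $\tilde\beta_h$ terms can be absorbed into a multiple of $c_F(\tilde\alpha_h+\tilde\beta_h)\Enorm{v-\phi}$. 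The hypothesis $\Enorm{v-\phi}\le \tfrac12\Enorm{v}$ together with Pythagoras in $\Enorm{\cdot}$ yields $\Enorm{\phi}\ge(\sqrt3/2)\Enorm{v}$, and \eqref{normeq} gives $\Enorm{\phi}\le 2\lambda_{\kmax}^{1/2}\|\phi\|_h$; combining these turns $\Enorm{v-\phi}$ into $(4\lambda_{\kmax}^{1/2}/\sqrt3)(\Enorm{v-\phi}/\Enorm{v})\|\phi\|_h$ and delivers $\|w\|_h \le \delta_{h,v}\|\phi\|_h$. The hard part is the construction of $\tilde w$: without this spectral twist one cannot invoke the $a_h$-orthogonality of $v-\phi$, which is the only exact orthogonality available on $V_h$. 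The device plays exactly the role, in the inconsistent setting, of the identity $\Paj=\Pbj$ from \eqref{fundrelation} that holds in the consistent case of section~\ref{sectpenal}.
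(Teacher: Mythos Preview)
Your argument is correct in substance and proceeds along a genuinely different route from the paper's proof.

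The paper introduces auxiliary ``exact'' bilinear forms $a^e(u,v):=a(\CEl u,\CEl v)$, $b^e(u,v):=b(\CEl u,\CEl v)$ on $\He$ and the corresponding orthogonal projections $P_{a^e,j}$, $P_{b^e,j}$ onto $U_j^e$. For these the extended eigenvectors are orthogonal, so the identity $P_{a^e,j}=P_{b^e,j}$ holds exactly as in the consistent case of section~\ref{sectpenal}. The paper then bounds $\|\Paj v-\Pbj v\|_h$ by the triangle inequality through this common intermediate projection, establishing separately $\Enorm{\Paj v-P_{a^e,j}v}\le 2\tilde\alpha_h\Enorm{v-\Paj v}$ and $\|\Pbj v-P_{b^e,j}v\|_h\le 2\tilde\beta_h\|v-\Pbj v\|_h$ via \eqref{deltaA}, \eqref{deltaB} and the respective orthogonalities of $P_{a^e,j}$, $P_{b^e,j}$. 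The Friedrich constant $c_F$ enters when the first of these is converted to the $\|\cdot\|_h$ norm.

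Your approach bypasses the auxiliary projections entirely: you work directly with $w=\Paj v-\Pbj v$, exploit the $b_h$-orthogonality of $\Pbj$ to obtain the identity $\|w\|_h^2=-b_h(w,v-\Paj v)$, and then use the spectrally twisted element $\tilde w$ to convert the $b$-pairing into an $a$-pairing where the $a_h$-orthogonality of $\Paj$ applies. This is more economical --- one identity plus two consistency estimates, rather than two auxiliary projections and two perturbation bounds --- and your construction of $\tilde w$ plays exactly the role that the identity $P_{a^e,j}=P_{b^e,j}$ plays in the paper's argument. One minor caveat on constants: your $\tilde\alpha_h$ term naturally carries the factor $\lambda_1^{-1/2}$ (from $\Enorm{\cE\tilde w}$) rather than $c_F$, and your appeal to Friedrich on $u_1^e$ combined with Corollary~\ref{lemCourantF} only yields $c_F\ge \sqrt{(1-2\beta_h)/(1+2\alpha_h)}\,\lambda_1^{-1/2}$. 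This is enough to recover the stated constant $\tfrac{8}{\sqrt3}$ for $h$ sufficiently small, but not uniformly under the standing assumption $\alpha_h,\beta_h<\tfrac12$. In the paper's route the constant $c_F$ arises directly from applying \eqref{friedrichs} to the difference $\Paj v-P_{a^e,j}v$, so the stated constant is obtained without this restriction.
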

\begin{proof}
It is convenient to introduce the following ``extended'' bilinear forms $a^e(\cdot,\cdot)$ and $b^e(\cdot,\cdot)$, cf. \eqref{alpha1} and \eqref{beta1}, that are defined on $ \He \times \He$:
\begin{equation} \label{defaebe} \begin{split}
  a^e(u, v)& :=a(\cE_h^{-\ell}u,\cE_h^{-\ell}v),\\
   b^e(u, v)& :=b(\cE_h^{-\ell}u,\cE_h^{-\ell}v).
\end{split}
\end{equation}
For these bilinear forms the extended eigenvectors $u_k^e$, $k=1,2,\ldots$ are orthogonal eigenvectors in the subspace ${\rm range}(\cE)   \subset \He$, with the same eigenvalues $\lambda_k$ as in \eqref{eqevproblem}: $a^e(u_k^e,u_\ell^e)= \lambda_k \delta_{k\ell}$, $b^e(u_k^e,u_\ell^e) = \delta_{k\ell}$,  $k,\ell \in \Bbb{N}$. We define corresponding orthogonal projections onto $U_j^e$:
\[
  P_{b^e,j} v := \sum_{k=1}^j b^e(v,u_k^e)u_k^e,~ P_{a^e,j} v := \sum_{k=1}^j \lambda_k^{-1} a^e(v,u_k^e)u_k^e,
\]
for which $
   P_{b^e,j}=P_{a^e,j}
$
holds.
 Take $v \in V_h$ and define $w:=P_{a_h,j}v-P_{a^e,j}v \in U_j^e$. Using \eqref{deltaA} we get
 \begin{align*}
  \Enorm{w}^2 &= a_h(P_{a_h,j}v-P_{a^e,j}v,w) = a_h(v-P_{a^e,j}v,w)\\
     & = a_h(v-P_{a^e,j}v,w)- a^e(v-P_{a^e,j}v,w) \\
     &= a_h(w,v-P_{a^e,j}v)- a(\CEl w,\CEl (v-P_{a^e,j}v)) \leq \tilde \alpha_h\Enorm{w} \Enorm{v-P_{a^e,j}v} \\
     & \leq \tilde \alpha_h\Enorm{w}\big( \Enorm{w} +\Enorm{v-P_{a_h,j}v}\big).
 \end{align*}
This yields 
\begin{equation}
\Enorm{P_{a_h,j}v-P_{a^e,j}v}  \leq  2  \tilde \alpha_h\Enorm{v-P_{a_h,j}v}. \label{estP1}
\end{equation}
With similar arguments we get
\begin{equation}\label{estP2}
 \|P_{b_h,j}v-P_{b^e,j}v\|_h \leq  2 \tilde \beta_h\|v-P_{b_h,j}v\|_h. 
\end{equation}
Note that
\begin{equation} \label{hhh}
 - \|P_{a_h,j} v-P_{b_h,j} v\|_h \leq  \|P_{a_h,j} v\|_h - \|P_{b_h,j} v\|_h \leq \|P_{a_h,j} v-P_{b_h,j} v\|_h.
\end{equation}
We derive a bound for $\|P_{a_h,j} v-P_{b_h,j} v\|_h$. For this we
define $\epsilon_{h,v}:=\frac{\Enorm{v - \Paj v}}{\Enorm{v}} \leq \tfrac12$. Due to orthogonality of the projection $\Paj$ we have $\Enorm{v} = (1-\epsilon_{h,v}^2)^{-\frac12} \Enorm{\Paj v} \leq \frac{2}{\sqrt{3}}\Enorm{\Paj v}$.
 Using this,  $P_{a^e,j}=P_{b^e,j}$, \eqref{estP1}, \eqref{estP2}, \eqref{friedrichs} and \eqref{normeq} we get
 \begin{equation}\label{eq2}\begin{split}
                 \|P_{a_h,j} v-P_{b_h,j} v\|_h 
                 & \leq \|P_{a_h,j} v-P_{a^e,j} v\|_h +\|P_{b_h,j} v-P_{b^e,j} v\|_h \\
                 &  \leq  c_F \Enorm{P_{a_h,j}v-P_{a^e,j}v} + 2 \tilde \beta_h\|v-P_{b_h,j}v\|_h \\
                 & \leq 2 c_F \tilde \alpha_h\Enorm{v-P_{a_h,j}v}+2 \tilde \beta_h\|v-P_{b_h,j}v\|_h \\
                 & \leq 2 c_F (\tilde \alpha_h+ \tilde \beta_h)\Enorm{v-P_{a_h,j}v}=2 c_F (\tilde \alpha_h+ \tilde \beta_h)\epsilon_{h,v} \Enorm{v} \\
                 & \leq \frac{4}{\sqrt{3}}c_F (\tilde \alpha_h+ \tilde \beta_h)\epsilon_{h,v} \Enorm{\Paj v}\\
                 & \leq \frac{8}{\sqrt{3}}c_F \lambda_{\kmax}^\frac12 (\tilde \alpha_h+ \tilde \beta_h)\epsilon_{h,v} \|\Paj v\|_h = \delta_{h,v}\|\Paj v\|_h. 
                  \end{split}
                  \end{equation}
    Using this in \eqref{hhh} yields the estimates in  \eqref{estPa1}.
\end{proof}

 We consider the discrete eigenproblem \eqref{discrev} and derive bounds for the approximations $\tilde \lambda_j \approx \lambda_j$, $j=1,\ldots, \kmax$.  In the proofs of the results below we combine the arguments used in the proof of Theorem~\ref{thmEV} with perturbation arguments, based on Corollary~\ref{lemCourantF} and Lemma~\ref{distP}. To simplify the presentation, we assume (without loss of generality) that the approximabilty parameter $\Theta_{h,j}$,  defined in \eqref{deftheta} satisfies $\Theta_{h,j} \leq \tfrac12$, $j=1,\ldots, \kmax$. 
\begin{theorem} \label{Thmeigenvalues1}
For $1 \leq j \leq \kmax $ the following holds:
\begin{align} 
 \lambda_j & \geq (1-2 \alpha_h)(1-2 \beta_h)(1- \Theta_{h,j}^2)\big(1- 2 \hat c (\tilde \alpha_h+\tilde \beta_h) \Theta_{h,j}\big) \tilde \lambda_j \label{resEVmainA}
\end{align}  with $\hat c:= \frac{8}{\sqrt{3}}c_F \lambda_{\kmax}^\frac12$.
\end{theorem}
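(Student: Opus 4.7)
The strategy is to follow the same Courant--Fischer chain used for the upper bound in Theorem~\ref{thmEV} but to track how the consistency errors $\alpha_h,\beta_h$ (on $U_{\kmax}^e$) and $\tilde\alpha_h,\tilde\beta_h$ (on $V_h+U_{\kmax}^e$) propagate. The four ingredients are Corollary~\ref{lemCourantF} (to swap continuous and perturbed discrete bilinear forms on $U_j^e$), the subspace $W_j:=P_h(U_j^e)\subset V_h$, Lemma~\ref{distP} (to replace the now-invalid identity $\Paj=\Pbj$), and the Courant--Fischer min--max principle on $V_h$.

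\emph{Step 1 (swap bilinear forms on $U_j^e$).} Starting from $\lambda_j=\max_{v\in U_j^e}\frac{a(\CEl v,\CEl v)}{b(\CEl v,\CEl v)}$, Corollary~\ref{lemCourantF} gives $a(\CEl v,\CEl v)\geq(1-2\alpha_h)a_h(v,v)$ and $b(\CEl v,\CEl v)\leq(1+2\beta_h)b_h(v,v)$; the elementary inequality $(1+2\beta_h)^{-1}\geq 1-2\beta_h$ then yields
\[
\lambda_j \;\geq\; (1-2\alpha_h)(1-2\beta_h)\,\max_{v\in U_j^e}\frac{a_h(v,v)}{b_h(v,v)}.
\]

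\emph{Step 2 (reparameterise via $W_j$ and factor the Rayleigh quotient).} Since $\Theta_{h,j}<1$, $P_h|_{U_j^e}$ is injective, hence $\dim W_j=j$. The classical equality of apertures between equi-dimensional subspaces of the Hilbert space $(\He,a_h(\cdot,\cdot))$, combined with the defining bound $\Enorm{u-P_hu}\leq\Theta_{h,j}\Enorm u$ on $U_j^e$, gives $\Enorm{w-\Paj w}\leq\Theta_{h,j}\Enorm w$ for every $w\in W_j$; in particular $\Paj:W_j\to U_j^e$ is a bijection. Writing $v=\Paj w$ and using the Pythagorean identity $\Enorm{\Paj w}^2=\Enorm w^2-\Enorm{w-\Paj w}^2$,
\[
\frac{a_h(\Paj w,\Paj w)}{b_h(\Paj w,\Paj w)}\;=\;\frac{a_h(w,w)}{b_h(w,w)}\cdot\frac{\Enorm{\Paj w}^2}{\Enorm w^2}\cdot\frac{\|w\|_h^2}{\|\Paj w\|_h^2},
\]
and the first auxiliary factor is $\geq 1-\Theta_{h,j}^2$ by the aperture estimate.

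\emph{Step 3 (norm ratio via Lemma~\ref{distP}, then min--max).} The identity $\Paj=\Pbj$ that was used in Theorem~\ref{thmEV} is no longer available, so I invoke Lemma~\ref{distP} to handle $\|w\|_h/\|\Paj w\|_h$. The hypothesis $\Enorm{w-\Paj w}\leq\tfrac12\Enorm w$ holds on $W_j$ because $\Theta_{h,j}\leq\tfrac12$; the lemma, together with $\|\Pbj w\|_h\leq\|w\|_h$, gives $\|w\|_h\geq(1-\delta_{h,w})\|\Paj w\|_h$, while the aperture bound above forces $\delta_{h,w}\leq\hat c(\tilde\alpha_h+\tilde\beta_h)\Theta_{h,j}$. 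Applying $(1-\delta_{h,w})^2\geq 1-2\delta_{h,w}$ supplies the final factor. Finally, Courant--Fischer min--max for the discrete problem \eqref{discrev} on the $j$-dimensional subspace $W_j\subset V_h$ yields $\max_{w\in W_j}a_h(w,w)/b_h(w,w)\geq\tilde\lambda_j$, and assembling Steps 1--3 produces \eqref{resEVmainA}.

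The main technical obstacle is the aperture/gap argument in Step 2, which is needed both for the factor $1-\Theta_{h,j}^2$ and to legitimately apply Lemma~\ref{distP} uniformly on $W_j$; once the standing assumption $\Theta_{h,j}\leq\tfrac12$ delivers this, the remainder is careful bookkeeping of the small-parameter factors.
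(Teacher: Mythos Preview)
Your proof is correct and follows essentially the same route as the paper's own argument: both start from Corollary~\ref{lemCourantF} to pass from $a,b$ to $a_h,b_h$ on $U_j^e$, reparameterise the maximum over $U_j^e$ via $W_j=P_h(U_j^e)$ using the bijection $\Paj:W_j\to U_j^e$, factor the Rayleigh quotient, bound $\Enorm{\Paj w}^2/\Enorm{w}^2\geq 1-\Theta_{h,j}^2$ by the gap symmetry, invoke Lemma~\ref{distP} for $\|w\|_h/\|\Paj w\|_h$, and finish with Courant--Fischer on $V_h$. The only cosmetic difference is that the paper pulls out the consistency factors as $\frac{a(\CEl v,\CEl v)}{a_h(v,v)}\cdot\frac{b_h(v,v)}{b(\CEl v,\CEl v)}\geq(1-2\alpha_h)(1-2\beta_h)$ directly, whereas you route the $\beta_h$ factor through $(1+2\beta_h)^{-1}\geq 1-2\beta_h$; and your aperture justification is spelled out where the paper simply appeals to ``elementary properties of orthogonal projections.''
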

\begin{proof} 
Take $1 \leq j \leq k_{\max}$. Define $P_h(U_j^e)= : W_j \subset V_h$. We have ${\rm dim}(W_j)=j$ and $\Paj:   W_j \to U_j^e$ is an isomorphism. Using this and Corollary~\ref{lemCourantF} we get
\begin{align}
  \lambda_j &= \max_{v \in U_j} \frac{a(v,v)}{b(v,v)} =\max_{v \in U_j^e} \frac{a(\CEl v,\CEl v)}{b(\CEl v,\CEl v)} \nonumber \\ &=\max_{v \in U_j^e} \frac{a(\CEl v,\CEl v)}{a_h( v, v)}\cdot \frac{b_h( v, v)}{b(\CEl v,\CEl v)}\cdot \frac{a_h(v,v)}{b_h(v,v)} \nonumber
  \\ & \geq (1-2 \alpha_h)(1-2 \beta_h)\max_{v \in U_j^e}\frac{a_h(v,v)}{b_h(v,v)} \nonumber \\
  & = (1-2 \alpha_h)(1-2 \beta_h) \max_{w \in W_j} \frac{a_h(\Paj w,\Paj w)}{b_h(\Paj w, \Paj w)} \nonumber \\ & =
   (1-2 \alpha_h)(1-2 \beta_h)\max_{w \in W_j} \frac{a_h( w, w)}{b_h( w,  w)} \cdot \frac{\Enorm{\Paj w}^2}{\Enorm{w}^2} \cdot \frac{\|w\|_h^2}{\|\Paj w\|_h^2}. \label{hulp0}
\end{align}
Elementary properties of orthogonal projections yield, for $w \in W_j$,  $\frac{\Enorm{\Paj w}^2}{\Enorm{w}^2} \geq 1- \Theta_{h,j}^2$ and $\Enorm{w- \Paj w} \leq \Theta_{h,j} \Enorm{w}$.
Using  Lemma~\ref{distP} we get
\[
  (1-\delta_{h,w})\|\Paj w\|_h \leq \|\Pbj w\|_h \leq \|w\|_h,
\]
hence, 
$
 \frac{\|w\|_h}{\|\Paj w\|_h} \geq 1- \hat c (\tilde \alpha_h+\tilde \beta_h) \Theta_{h,j}
$, with $\hat c:= \frac{8}{\sqrt{3}}c_F \lambda_{\kmax}^\frac12$. 
Using this in \eqref{hulp0} and with the Courant-Fischer eigenvalue characterization we obtain
\begin{align*}
   \lambda_j & \geq (1-2 \alpha_h)(1-2 \beta_h)(1- \Theta_{h,j}^2)\big(1- \hat c (\tilde \alpha_h+\tilde \beta_h) \Theta_{h,j}\big)^2  \max_{w \in W_j} \frac{a_h( w, w)}{b_h( w,  w)}  \\ &
   \geq (1-2 \alpha_h)(1-2 \beta_h)(1- \Theta_{h,j}^2)\big(1- \hat c (\tilde \alpha_h+\tilde \beta_h) \Theta_{h,j}\big)^2 \tilde \lambda_j,
\end{align*}
which yields the result in \eqref{resEVmainA}.
\end{proof}
\ \\
For the eigenvalue estimate in the other direction $ \lambda_j \leq c \tilde \lambda_j$ we present two different results. In the lemma below we introduce a consistency condition on a subspace of \emph{discrete} eigenvectors. As we will see in our applications, the corresponding consistency parameters ($\hat \alpha_h$ and $\hat \beta_h$ below) can be significantly larger then $\alpha_{h,1}$ and $\beta_{h,1}$ used in Assumption~\ref{ass1}, and the resulting eigenvalue error estimate is not optimal. The reason why we derive the result in Lemma~\ref{Thmeigenvalues2} is that we need a \emph{convergence} of discrete eigenvalues result (given in Corollary~\ref{corolconvergence}) in the eigenvector error analysis in section~\ref{secteigenvector}. In Theorem~\ref{Thmeigenvalues3} we derive a result that avoids the consistency parameters $\hat \alpha_h$, $\hat \beta_h$ and instead uses a quantity that measures an  error in the eigenvector approximation.  
\begin{lemma} \label{Thmeigenvalues2}
We assume
\begin{equation} \label{assstabA}
 2 \tilde \lambda_{k_{\max}}  k_b(v,v) \leq k_a(v,v) \quad \text{for all}~v \in \tilde U_{k_{\max}},
\end{equation}
and that there are $\hat \alpha_h<1$, $\hat \beta_h<1$, with $\hat \alpha_h \downarrow 0$, $\hat \beta_h \downarrow 0$ for $h \downarrow 0$,   such that
\begin{equation} \label{consdiscrete}
\begin{split}
 \left|  \tilde a_h(v,v) - a(\CEl v, \CEl v)\right| & \leq \hat \alpha_h \Enorm{v}^2,~~
 \text{for all}~v \in \tilde U_{k_{\max}},\\
 \left|\tilde b_h(v,v)-  b(\CEl v,\CEl v)\right| & \leq \hat \beta_h \|v\|_h^2,\quad \text{for all}~v \in \tilde U_{k_{\max}}.
 \end{split}
\end{equation}
 Assume $ \hat \beta_h < \tfrac12$ is satisfied. The following holds:
 \begin{equation} \label{upper1}
 \tilde \lambda_j \geq  \frac {1-2 \hat \beta_h}{1 + 2 \hat \alpha_h} \lambda_j, \quad 1 \leq j \leq k_{\max}.
 \end{equation}
\end{lemma}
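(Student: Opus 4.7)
The plan is to adapt the structure of the lower-bound argument in the proof of Theorem~\ref{thmEV}, letting the consistency parameters $\hat\alpha_h$ and $\hat\beta_h$ account for the fact that $\tilde a_h$ and $\tilde b_h$ no longer reproduce $a$ and $b$ exactly on $\cE(H)$. As before, the strategy is to exhibit a $j$-dimensional subspace of $H$ on which the continuous Rayleigh quotient is controlled by $\tilde\lambda_j$, and then invoke the Courant-Fischer characterization of $\lambda_j$.

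First I would verify that the lifting $\cE_h^{-\ell}$ restricted to $\tilde U_j$ is injective, so that $\cE_h^{-\ell}(\tilde U_j)$ really has dimension $j$. If $\cE_h^{-\ell}v=0$ for some $0\neq v\in \tilde U_j$, then $a(\cE_h^{-\ell}v,\cE_h^{-\ell}v)=b(\cE_h^{-\ell}v,\cE_h^{-\ell}v)=0$, so the consistency estimates \eqref{consdiscrete} reduce to $\tilde a_h(v,v)\le \hat\alpha_h\, a_h(v,v)$ and $\tilde b_h(v,v)\le \hat\beta_h\, b_h(v,v)$. Plugging these into $a_h=\tilde a_h+k_a$ and $b_h=\tilde b_h+k_b$ gives $(1-\hat\alpha_h)a_h(v,v)\le k_a(v,v)$ and $(1-\hat\beta_h)b_h(v,v)\le k_b(v,v)$; combined with the penalization stability \eqref{assstabA}, this forces the discrete Rayleigh quotient $a_h(v,v)/b_h(v,v)$ to exceed $\tilde\lambda_{k_{\max}}$, contradicting $v\in\tilde U_j\subset\tilde U_{k_{\max}}$.

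Next, for arbitrary $v\in\tilde U_j$ the Courant-Fischer identity gives $a_h(v,v)\le \tilde\lambda_j\, b_h(v,v)$. I would unfold the $a_h$ and $b_h$ splittings, replace $\tilde a_h(v,v)$ by $a(\cE_h^{-\ell}v,\cE_h^{-\ell}v)$ from below and $\tilde b_h(v,v)$ by $b(\cE_h^{-\ell}v,\cE_h^{-\ell}v)$ from above using \eqref{consdiscrete}, and re-use $a_h(v,v)\le\tilde\lambda_j b_h(v,v)$ to control the resulting consistency error by $\tilde\lambda_j(\hat\alpha_h+\hat\beta_h)b_h(v,v)$. The penalty contributions are then absorbed exactly as in Theorem~\ref{thmEV}: \eqref{assstabA} combined with $\tilde\lambda_j\le\tilde\lambda_{k_{\max}}$ yields $\tilde\lambda_j k_b(v,v)\le \tfrac12 k_a(v,v)$, so the term $\tilde\lambda_j k_b(v,v)-k_a(v,v)$ is nonpositive and may be dropped. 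This leaves
\[
  a(\cE_h^{-\ell}v,\cE_h^{-\ell}v)-\tilde\lambda_j\, b(\cE_h^{-\ell}v,\cE_h^{-\ell}v)\le \tilde\lambda_j(\hat\alpha_h+\hat\beta_h)\, b_h(v,v).
\]

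To finish I need to pass from $b_h(v,v)$ back to $b(\cE_h^{-\ell}v,\cE_h^{-\ell}v)$ on the right-hand side. The lower bound $\tilde b_h(v,v)\ge b(\cE_h^{-\ell}v,\cE_h^{-\ell}v)-\hat\beta_h b_h(v,v)$ from \eqref{consdiscrete}, together with the bound $k_b(v,v)\le \tfrac12 b_h(v,v)$ obtained from \eqref{assstabA} and $k_a(v,v)\le a_h(v,v)\le \tilde\lambda_{k_{\max}} b_h(v,v)$, yields $(1-2\hat\beta_h)b_h(v,v)\le 2\, b(\cE_h^{-\ell}v,\cE_h^{-\ell}v)$. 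Substituting produces the continuous Rayleigh-quotient bound $a(\cE_h^{-\ell}v,\cE_h^{-\ell}v)/b(\cE_h^{-\ell}v,\cE_h^{-\ell}v)\le \frac{1+2\hat\alpha_h}{1-2\hat\beta_h}\tilde\lambda_j$ on $\cE_h^{-\ell}(\tilde U_j)$, and the Courant-Fischer minimax for $\lambda_j$ in $H$ gives the desired estimate \eqref{upper1}. The main obstacle is the bookkeeping around the constant: it is easy to produce a weaker ratio like $\frac{2(1+\hat\alpha_h)}{1-2\hat\beta_h}$ if one simply drops $k_a(v,v)$ on the $a$-side, and one has to carefully use \eqref{assstabA} so that the $-k_a$ term cancels $\tilde\lambda_j k_b$ instead, yielding the sharper numerator $1+2\hat\alpha_h$.
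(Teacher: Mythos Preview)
Your proposal is correct and follows essentially the same strategy as the paper: establish injectivity of $\CEl$ on $\tilde U_j$ via \eqref{assstabA} and \eqref{consdiscrete}, use $a_h(v,v)\le\tilde\lambda_j b_h(v,v)$ together with the penalty domination $\tilde\lambda_j k_b\le\tfrac12 k_a$ and the key bound $b_h(v,v)\le 2\tilde b_h(v,v)$ (equivalently $k_b\le\tfrac12 b_h$), then apply Courant--Fischer on the $j$-dimensional subspace $\CEl(\tilde U_j)\subset H$. The only organisational difference is that the paper first isolates the intermediate inequality $\tilde\lambda_j\ge\max_{v\in\tilde U_j}\tilde a_h(v,v)/\tilde b_h(v,v)$ via a mediant argument and then applies consistency separately to numerator and denominator, whereas you work directly with the linear inequality $a_h\le\tilde\lambda_j b_h$, insert the consistency estimates inline, and only at the end convert $b_h$ to $b(\CEl v,\CEl v)$; your final substitution indeed yields exactly $1+\frac{2(\hat\alpha_h+\hat\beta_h)}{1-2\hat\beta_h}=\frac{1+2\hat\alpha_h}{1-2\hat\beta_h}$, so the constants match.
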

\begin{proof} 
The proof is along the same lines as in the second part of the proof of Theorem~\ref{thmEV}.
Note that
 \[
   \tilde \lambda_j = \max_{v \in \tilde U_j} \frac{a_h(v,v)}{b_h(v,v)} = \max_{v \in \tilde U_j} \frac{\tilde a_h( v , v)+ k_a(v,v)}{\tilde b_h( v, v) + k_b(v,v)} .
 \]
 Using \eqref{assstabA}  we get for $v \in \tilde U_j$:
 \[
  2 \tilde \lambda_{\kmax} k_b(v,v) \leq k_a(v,v) \leq a_h(v,v)\leq\tilde \lambda_{\kmax} (\tilde b_h(v,v) + k_b(v,v)),
 \]
which implies $k_b(v,v) \leq \tilde b_h(v,v)$, hence, 
\begin{equation} \label{77}
 b_h(v,v) = \tilde b_h(v,v) + k_b(v,v) \leq 2 \tilde b_h(v,v).
\end{equation}
Thus we have $ \tilde b_h(v,v) >0$ for all $v \in \tilde U_j \setminus\{0\}$. Furthermore, this yields injectivity of $\CEl$ on $\tilde U_j$, as follows. Take $ v\in \tilde U_j$ with $\CEl v=0$. From \eqref{consdiscrete} and \eqref{77} we get $|\tilde b_h(v,v)| \leq \hat \beta_h b_h(v,v) \leq 2\hat \beta_h \tilde b_h(v,v)$. This yields $\tilde b_h(v,v)=0$ and thus, due to \eqref{77}, $b_h(v,v)=0$, hence, $v=0$. Thus $\CEl$ is injective. 
For arbitrary $v \in \tilde U_j$ we have $a_h(v,v) \leq \tilde \lambda_j b_h(v,v)$. This implies
\[
  \tilde a_h(v, v) + k_a(v,v) - \tilde \lambda_j k_b(v,v) \leq \tilde \lambda_j \tilde b_h( v, v), 
\]
which, due to \eqref{assstabA}, implies
$
 \tilde  a_h( v, v)\leq \tilde \lambda_j \tilde b_h( v, v)$.
Using this and \eqref{assstabA} yields, for $v \in \tilde U_j$ with $k_b(v,v)>0$:
$
  \frac{\tilde a_h( v, v)}{\tilde b_h( v, v)} \leq \tilde \lambda_j \leq \tilde \lambda_{k_{\max}} < \frac{k_a(v,v)}{k_b(v,v)},
$
and thus
$
  \frac{\tilde a_h( v, v) +k_a(v,v)}{\tilde b_h( v,v)+k_b(v,v)} \geq \frac{\tilde a_h( v,v) }{\tilde b_h( v, v)}  
$
holds for all $v \in \tilde U_j$, $v \neq 0$.
 Using this we  obtain
\begin{equation} \label{44} 
  \tilde \lambda_j = \max_{v \in \tilde U_j} \frac{a_h(v,v)}{b_h(v,v)}  \geq  \max_{v \in \tilde U_j} \frac{\tilde a_h(v,v) }{\tilde b_h(v,v)} .
\end{equation}
Note that for $v \in \tilde U_j$ we have, using \eqref{consdiscrete},
\[ \begin{split}
\tilde a_h(v,v)  & \geq a(\CEl v, \CEl v)- \hat \alpha_h a_h(v,v) \geq a(\CEl v, \CEl v)-\hat \alpha_h \tilde \lambda_j b_h(v,v) \\ & \geq a(\CEl v, \CEl v)- 2\hat \alpha_h \tilde \lambda_j \tilde b_h(v,v).
\end{split} \]
Using this in \eqref{44} yields
\begin{equation} \label{45}
  (1 + 2 \hat \alpha_h) \tilde \lambda_j \geq \max_{v \in \tilde U_j} \frac{ a(\CEl v,\CEl v) }{\tilde b_h(v,v)}.
\end{equation}
Also note 
\[
  \tilde b_h(v,v) \leq b(\CEl v , \CEl v)+ \hat \beta_h b_h(v,v) \leq b(\CEl v , \CEl v)+ 2 \hat  \beta_h \tilde b_h(v,v), 
\]
hence, $(1- 2 \hat \beta_h) \tilde b_h(v,v) \leq b(\CEl v , \CEl v)$. Using this in \eqref{45} and with the Courant-Fischer theorem we finally obtain
\[\begin{split}
   (1 + 2 \hat \alpha_h) \tilde \lambda_j & \geq (1- 2 \hat \beta_h)\max_{v \in \tilde U_j} \frac{ a(\CEl v,\CEl v) }{ b(\CEl v,\CEl v)} \\  & = (1-2 \hat \beta_h)\max_{w \in \CEl (\tilde U_j)}\frac{ a(w,w) }{b(w,w)} \geq  (1-2 \hat \beta_h)\lambda_j, 
 \end{split}\]
which completes the proof.
\end{proof}

\begin{corollary}[convergence of eigenvalues] \label{corolconvergence}
\rm Assume that the conditions \eqref{assstabA} and \eqref{consdiscrete} are fulfilled and that  $\Theta_{h,j} \downarrow 0$  for $h \downarrow 0$. For simplicity we make the assumption (which holds in our applications) $\tilde \alpha_h^2 \leq c \alpha_h$, $\tilde \beta_h^2 \leq c \beta_h$ for a suitable constant $c$. The results \eqref{resEVmainA} and \eqref{upper1} imply the error estimate
\begin{equation} \label{error1}
 \frac{|\lambda_j -\tilde \lambda_j|}{\lambda_j} \leq c \max \{ \hat \alpha_h, \hat \beta_h, \alpha_h, \beta_h, \Theta_{h,j}^2\} + \text{h.o. terms}  \quad \text{($h \downarrow 0$)}.
\end{equation}
Hence, for $j \leq \kmax$, we have \emph{convergence} of the discrete eigenvalue $\tilde \lambda_j \to \lambda_j$ for $h \downarrow 0$, with an upper bound for the rate of convergence determined by $\max \{ \hat \alpha_h, \hat \beta_h, \alpha_h, \beta_h, \Theta_{h,j}^2\}$.
\end{corollary}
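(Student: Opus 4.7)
The plan is to combine the two-sided bounds from Theorem~\ref{Thmeigenvalues1} and Lemma~\ref{Thmeigenvalues2}: the former controls $\tilde\lambda_j - \lambda_j$ from above, and the latter controls $\lambda_j - \tilde\lambda_j$ from above. Each side is then massaged into the form claimed in \eqref{error1} by expanding products and collecting higher-order contributions.

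First, I would use Theorem~\ref{Thmeigenvalues1}. Rewriting
\[
\lambda_j \geq (1-2\alpha_h)(1-2\beta_h)(1-\Theta_{h,j}^2)\bigl(1 - 2\hat c (\tilde\alpha_h+\tilde\beta_h)\Theta_{h,j}\bigr)\tilde\lambda_j,
\]
and expanding the product (under the standing assumption that all of $\alpha_h, \beta_h, \tilde\alpha_h,\tilde\beta_h, \Theta_{h,j}$ are $\leq \tfrac12$), I would obtain
\[
\tilde\lambda_j - \lambda_j \leq \bigl[2\alpha_h + 2\beta_h + \Theta_{h,j}^2 + 2\hat c(\tilde\alpha_h+\tilde\beta_h)\Theta_{h,j}\bigr]\tilde\lambda_j + \text{h.o.\ terms},
\]
where the higher-order terms are products of at least two of the small parameters. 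Using the elementary inequality $(\tilde\alpha_h+\tilde\beta_h)\Theta_{h,j} \leq \tilde\alpha_h^2 + \tilde\beta_h^2 + \tfrac12 \Theta_{h,j}^2$ together with the hypothesis $\tilde\alpha_h^2 \leq c\alpha_h$, $\tilde\beta_h^2 \leq c\beta_h$ (absorbing constants), the cross term collapses into the quantities $\alpha_h$, $\beta_h$, $\Theta_{h,j}^2$ already present. This yields
\[
\frac{\tilde\lambda_j - \lambda_j}{\tilde\lambda_j} \leq c \max\{\alpha_h,\beta_h,\Theta_{h,j}^2\} + \text{h.o.\ terms},
\]
and since $\tilde\lambda_j/\lambda_j$ is bounded as $h \downarrow 0$ (by the very bound we are deriving, used bootstrap-style on the leading order), one may equivalently divide by $\lambda_j$.

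Second, I would apply Lemma~\ref{Thmeigenvalues2}: $\tilde\lambda_j \geq \frac{1-2\hat\beta_h}{1+2\hat\alpha_h}\lambda_j$ rearranges to
\[
\lambda_j - \tilde\lambda_j \leq \lambda_j - \frac{1-2\hat\beta_h}{1+2\hat\alpha_h}\lambda_j = \frac{2\hat\alpha_h + 2\hat\beta_h}{1+2\hat\alpha_h}\lambda_j \leq 2(\hat\alpha_h+\hat\beta_h)\lambda_j,
\]
so $(\lambda_j-\tilde\lambda_j)/\lambda_j \leq 2(\hat\alpha_h+\hat\beta_h)$, contributing the $\hat\alpha_h$, $\hat\beta_h$ terms to the final bound. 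Combining both directions yields \eqref{error1}, and convergence $\tilde\lambda_j \to \lambda_j$ as $h \downarrow 0$ follows immediately from the assumed decay $\Theta_{h,j}, \alpha_h, \beta_h, \hat\alpha_h, \hat\beta_h \downarrow 0$.

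No single step is a serious obstacle; the minor bookkeeping annoyance is keeping track of which terms are genuinely leading order versus higher order when expanding the product of four factors in Theorem~\ref{Thmeigenvalues1} and handling the mixed term $(\tilde\alpha_h+\tilde\beta_h)\Theta_{h,j}$. The assumption $\tilde\alpha_h^2 \leq c\alpha_h$, $\tilde\beta_h^2 \leq c\beta_h$ is exactly what makes this mixed term disappear into the quantities listed inside the maximum, rather than forcing $\tilde\alpha_h,\tilde\beta_h$ to appear in the rate.
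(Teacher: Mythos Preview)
Your proposal is correct and follows exactly the route the paper intends: the corollary is stated without a separate proof, simply as an immediate consequence of \eqref{resEVmainA} and \eqref{upper1}, and your argument supplies precisely the product expansion and Young-inequality bookkeeping that makes this consequence explicit. The use of $\tilde\alpha_h^2 \leq c\alpha_h$, $\tilde\beta_h^2 \leq c\beta_h$ to absorb the mixed term $(\tilde\alpha_h+\tilde\beta_h)\Theta_{h,j}$ is exactly why that hypothesis is included in the statement.
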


We now derive another eigenvalue error estimate in which instead of the consistency parameters $\hat \alpha_h$, $\hat \beta_h$ introduced above we use $\alpha_h$, $\beta_h$ (cf. \eqref{deltaa}-\eqref{deltab}) and a quantity that measures (in $\Enorm{\cdot}$) the distance between the discrete invariant space $\tilde U_m$ and the corresponding continuous one $U_m^e$:
\begin{equation} \label{defPhi}
 \Phi_{h,m}:= \max_{w \in \tilde U_m} \frac{\Enorm{w - P_{a_h,m}w}}{\Enorm{w}}, \quad 1 \leq m \leq \kmax.
\end{equation}
\begin{theorem} \label{Thmeigenvalues3}
 Assume that for an $m$ with $1 \leq m \leq \kmax$ the condition
 \begin{equation} \label{condPhi}
  \max \{\Phi_{h,m}, c_F \tilde \lambda_{\kmax} \Phi_{h,m}\} \leq \tfrac12
 \end{equation}
is satisfied. The following holds for al $1 \leq j \leq m$:
\begin{equation} \label{estEVmain}
 \tilde \lambda_j \geq   (1-2 \alpha_h)(1-2 \beta_h)\big(1+\hat c (\tilde \alpha_h+\tilde \beta_h)\Phi_{h,m}\big)^{-2}(1-c_F^2 \tilde \lambda_{\kmax}\Phi_{h,m}^2)\lambda_j, 
\end{equation}
with $\hat c:=\frac{8}{\sqrt{3}}c_F \lambda_{\kmax}^\frac12$.
\end{theorem}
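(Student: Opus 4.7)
The plan is to mirror the proof of Theorem~\ref{Thmeigenvalues1} with the roles of the continuous and discrete invariant subspaces interchanged: apply the Courant-Fischer min-max characterization to $\lambda_j$ using the test subspace $S := \CEl(P_{a_h,m}(\tilde U_j)) \subset H$. First one verifies $\dim S = j$. Under the hypothesis $\Phi_{h,m}\leq\tfrac12$, the map $P_{a_h,m}|_{\tilde U_m}$ is injective, since $P_{a_h,m}v=0$ would force $\Enorm{v-P_{a_h,m}v}=\Enorm{v}$, contradicting \eqref{defPhi}. Because $\CEl\cE=\mathrm{id}_H$, the operator $\CEl$ is injective on $U_m^e=\cE(U_m)$. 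Hence $v\mapsto \CEl P_{a_h,m}v$ is injective on $\tilde U_j\subset\tilde U_m$, and $S$ has dimension $j$. Courant-Fischer then gives $\lambda_j \leq \max_{v\in\tilde U_j} a(\CEl P_{a_h,m}v,\CEl P_{a_h,m}v)/b(\CEl P_{a_h,m}v,\CEl P_{a_h,m}v)$.

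Pick $v\in\tilde U_j$ and set $w:=P_{a_h,m}v\in U_m^e\subset U_{\kmax}^e$. Corollary~\ref{lemCourantF} supplies $a_h(w,w)\geq(1-2\alpha_h)a(\CEl w,\CEl w)$ and $b_h(w,w)\leq(1+2\beta_h)b(\CEl w,\CEl w)$. Dividing and using $(1+2\beta_h)(1-2\beta_h)\leq 1$, i.e.\ $1+2\beta_h\leq(1-2\beta_h)^{-1}$, yields the consistency step
\begin{equation*}
(1-2\alpha_h)(1-2\beta_h)\frac{a(\CEl w,\CEl w)}{b(\CEl w,\CEl w)} \leq \frac{a_h(w,w)}{b_h(w,w)} = \frac{a_h(P_{a_h,m}v,P_{a_h,m}v)}{b_h(P_{a_h,m}v,P_{a_h,m}v)}.
\end{equation*}

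It remains to dominate $a_h(P_{a_h,m}v,P_{a_h,m}v)/b_h(P_{a_h,m}v,P_{a_h,m}v)$ by $\tilde\lambda_j$ times the stated correction. Orthogonality of $P_{a_h,m}$ with respect to $a_h(\cdot,\cdot)$ gives $a_h(P_{a_h,m}v,P_{a_h,m}v)\leq a_h(v,v)$ for free. The denominator is the main technical obstacle: since $P_{a_h,m}$ is not $b_h$-orthogonal, no direct Pythagorean identity in $\|\cdot\|_h$ is available. The remedy is to route through the $b_h$-orthogonal projection $P_{b_h,m}$: Lemma~\ref{distP} (applied with $j=m$, whose hypothesis is ensured by $\Phi_{h,m}\leq\tfrac12$) provides $\|P_{b_h,m}v\|_h\leq(1+\hat c(\tilde\alpha_h+\tilde\beta_h)\Phi_{h,m})\|P_{a_h,m}v\|_h$. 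Pythagoras for $P_{b_h,m}$ combined with the minimization property $\|v-P_{b_h,m}v\|_h\leq\|v-P_{a_h,m}v\|_h$, the Friedrich inequality \eqref{friedrichs}, the definition of $\Phi_{h,m}$, and the bound $\Enorm{v}^2\leq\tilde\lambda_{\kmax}\|v\|_h^2$ for $v\in\tilde U_m$ (coming from the orthonormality in \eqref{approxEVPa}) yields
\begin{equation*}
\|P_{b_h,m}v\|_h^2 = \|v\|_h^2 - \|v-P_{b_h,m}v\|_h^2 \geq (1-c_F^2\tilde\lambda_{\kmax}\Phi_{h,m}^2)\|v\|_h^2.
\end{equation*}
Combining these two bounds with the Courant-Fischer inequality $a_h(v,v)/b_h(v,v)\leq\tilde\lambda_j$ for $v\in\tilde U_j$ and substituting into the previous display yields \eqref{estEVmain} after rearrangement. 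The hardest step is the passage through $P_{b_h,m}$: it is here that both the perturbative factor $(1+\hat c(\tilde\alpha_h+\tilde\beta_h)\Phi_{h,m})^{-2}$ (from the $a_h$/$b_h$ inconsistency encoded in Lemma~\ref{distP}) and the quadratic factor $(1-c_F^2\tilde\lambda_{\kmax}\Phi_{h,m}^2)$ (from the only genuinely Pythagorean identity we have) simultaneously appear.
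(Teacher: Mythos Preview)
Your argument is correct and follows essentially the same route as the paper's proof: both use the test space $P_{a_h,m}(\tilde U_j)\subset U_m^e$ (the paper calls it $W_j$), apply Corollary~\ref{lemCourantF} for the consistency step, use the $a_h$-orthogonality of $P_{a_h,m}$ for the numerator, and handle the denominator by passing through $P_{b_h,m}$ via Lemma~\ref{distP} together with the Friedrich inequality and $\Enorm{v}^2\leq\tilde\lambda_{\kmax}\|v\|_h^2$ on $\tilde U_m$. The only differences are cosmetic: you invoke Courant--Fischer at the start and then bound each Rayleigh quotient, whereas the paper first decomposes $\tilde\lambda_j$ into three factors and applies Courant--Fischer at the end; you also spell out the injectivity of $P_{a_h,m}|_{\tilde U_m}$ and of $\CEl|_{U_m^e}$ to justify $\dim S=j$, which the paper states without elaboration.
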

\begin{proof}
Take $1 \leq j \leq m$. Note that 
 \begin{equation} \label{eveq1} \begin{split}
  \tilde \lambda_j & = \max_{v \in \tilde U_{j}} \frac{a_h(v,v)}{b_h(v,v)} = \max_{v \in \tilde U_j} \frac{a_h(P_{a_h,m} v, P_{a_h,m}v)}{b_h(P_{a_h,m} v , P_{a_h,m}v)} \cdot \frac{ \|P_{a_h,m}v\|_h^2}{\|v\|_h^2}\cdot \frac{\Enorm{v}^2}{\Enorm{P_{a_h,m}v}^2} \\ & \geq 
    \max_{v \in \tilde U_j} \frac{a_h(P_{a_h,m} v, P_{a_h,m}v)}{b_h(P_{a_h,m} v , P_{a_h,m}v)} \cdot \frac{ \|P_{a_h,m}v\|_h^2}{\|v\|_h^2}.
\end{split} \end{equation}
From Lemma~\ref{distP} we obtain
\[
  \frac{ \|P_{a_h,m}v\|_h^2}{\|v\|_h^2}\geq \big(1+\hat c (\tilde \alpha_h+\tilde \beta_h)\Phi_{h,m}\big)^{-2} \frac{ \|P_{b_h,m}v\|_h^2}{\|v\|_h^2},\quad \hat c:=\frac{8}{\sqrt{3}}c_F \lambda_{\kmax}^\frac12.
\]
For the projection error in $P_{b_h,m} v$ we get, for $v \in \tilde U_j$,
\[ \begin{split}
 \|v-P_{b_h,m} v\|_h & \leq \|v-P_{a_h,m} v\|_h \leq c_F \Enorm{v-P_{a_h,m} v} \\ & \leq c_F \Phi_{h,m} \Enorm{v}  \leq c_F \tilde \lambda_{\kmax}^\frac12 \Phi_{h,m}\|v\|_h. 
\end{split} \]
Thus we get
\[ \frac{ \|P_{a_h,m}v\|_h^2}{\|v\|_h^2 }\geq \big(1+\hat c (\tilde \alpha_h+\tilde \beta_h)\Phi_{h,m}\big)^{-2}(1-c_F^2 \tilde \lambda_{\kmax}\Phi_{h,m}^2).
\]
Define $W_j:=P_{a_h,m}( \tilde U_j) \subset U_m^e$. Note that $\dim (W_j)=j$. Using the consistency estimates in Corollary~\ref{lemCourantF} we get
\[ \begin{split}
\max_{v \in \tilde U_j} \frac{a_h(P_{a_h,m} v, P_{a_h,m}v)}{b_h(P_{a_h,m} v , P_{a_h,m}v)}
 & = \max_{w \in W_j} \frac{a_h(w,w)}{b_h(w,w)}
 \\ & \geq (1-2 \alpha_h)(1-2 \beta_h)\max_{w \in \CEl (W_j)} \frac{a(w,w)}{b(w,w)} \\
& \geq (1-2 \alpha_h)(1-2 \beta_h) \lambda_j.
\end{split} \]
Combining these results yields the estimate in \eqref{estEVmain}. 
\end{proof}
\begin{corollary} \label{corolconvergence2} \rm
 Assume that $\Theta_{h,j} \downarrow 0$ and $\Phi_{h,m} \downarrow 0$ for $h \downarrow 0$. The results \eqref{resEVmainA} and \eqref{estEVmain} imply the error estimate
\begin{equation} \label{error2}
 \frac{|\lambda_j -\tilde \lambda_j|}{\lambda_j} \leq c \max \{\alpha_h, \beta_h, \Theta_{h,j}^2,\Phi_{h,m}^2 \} + \text{h.o. terms}  \quad \text{($h \downarrow 0$)}, 1 \leq j \leq m.
\end{equation}
Hence, we obtain a rate of convergence determined by  $\max \{ \alpha_h, \beta_h, \Theta_{h,j}^2,  \Phi_{h,m}^2 \}$. An important difference between the results \eqref{error1} and \eqref{error2} is that in the latter the consistency parameters $\hat \alpha_h$ and $\hat \beta_h$ do not occur.  Furthermore, note that in \eqref{error2} the approximability parameters $\Theta_{h,j}$ and $\Phi_{h,m}$ occur in \emph{squared} form, whereas the consistency parameters $\alpha_h$, $\beta_h$ occur \emph{linearly}. 
\end{corollary}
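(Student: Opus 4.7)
The plan is to algebraically invert and Taylor-expand the two one-sided inequalities \eqref{resEVmainA} (which gives a lower bound on $\lambda_j$ in terms of $\tilde\lambda_j$, hence an upper bound on $\tilde\lambda_j/\lambda_j$) and \eqref{estEVmain} (which directly provides a lower bound on $\tilde\lambda_j/\lambda_j$), then combine the two. All factors appearing are of the form $(1-x)$ or $(1+x)^{-2}$ with $x$ a small parameter tending to zero as $h \downarrow 0$; the assumptions $\alpha_h, \beta_h, \tilde\alpha_h, \tilde\beta_h < \tfrac12$, $\Theta_{h,j} \downarrow 0$, $\Phi_{h,m} \downarrow 0$ ensure that the elementary estimates $(1-x)^{-1} \leq 1 + 2x$ and $(1+x)^{-2} \geq 1 - 2x$ can be applied in the small-$h$ regime.

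First I would process \eqref{resEVmainA}. Rewriting it as
\[
 \frac{\tilde\lambda_j}{\lambda_j} \leq (1-2\alpha_h)^{-1}(1-2\beta_h)^{-1}(1-\Theta_{h,j}^2)^{-1}\bigl(1- 2\hat c(\tilde\alpha_h+\tilde\beta_h)\Theta_{h,j}\bigr)^{-1}
\]
and expanding each factor yields an upper estimate $\tilde\lambda_j/\lambda_j \leq 1 + 4\alpha_h + 4\beta_h + 2\Theta_{h,j}^2 + 4\hat c(\tilde\alpha_h+\tilde\beta_h)\Theta_{h,j} + R_1$, where $R_1$ collects all products of at least two of the small parameters. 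Crucially, the cross term $4\hat c(\tilde\alpha_h+\tilde\beta_h)\Theta_{h,j}$ is itself a product of two quantities tending to zero and therefore qualifies as higher order with respect to the leading quantities listed in the max. This gives the one-sided bound
\[
 \frac{\tilde\lambda_j - \lambda_j}{\lambda_j} \leq 4\max\{\alpha_h,\beta_h\} + 2\Theta_{h,j}^2 + \text{h.o.~terms}.
\]

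Second I would apply the same procedure to \eqref{estEVmain}, expanding $(1+\hat c(\tilde\alpha_h+\tilde\beta_h)\Phi_{h,m})^{-2} \geq 1 - 2\hat c(\tilde\alpha_h+\tilde\beta_h)\Phi_{h,m} + \text{h.o.}$ and distributing the four factors. Retaining only the linear terms in $\alpha_h, \beta_h, \Phi_{h,m}^2$ (and absorbing the mixed product $(\tilde\alpha_h+\tilde\beta_h)\Phi_{h,m}$ into higher-order terms by the same argument as above) produces
\[
 \frac{\lambda_j - \tilde\lambda_j}{\lambda_j} \leq 2\alpha_h + 2\beta_h + c_F^2 \tilde\lambda_{\kmax}\Phi_{h,m}^2 + \text{h.o.~terms}.
\]
To handle the factor $\tilde\lambda_{\kmax}$ cleanly I would invoke the already-derived upper bound from the first step (applied with $j=\kmax$) to obtain $\tilde\lambda_{\kmax} \leq \lambda_{\kmax}(1+o(1))$, so that this prefactor is uniformly bounded as $h \downarrow 0$ and can be folded into the constant $c$. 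Combining the two one-sided bounds yields \eqref{error2}.

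I do not expect a serious obstacle; the argument is bookkeeping. The one point that requires explicit justification is the claim that every cross term involving $\tilde\alpha_h$ or $\tilde\beta_h$ (which do not appear in the final max) is legitimately absorbed into the ``higher-order terms.'' This is valid because each such cross term contains at least one of $\tilde\alpha_h$ or $\tilde\beta_h$ (both $\downarrow 0$) multiplied by another small factor $\Theta_{h,j}$ or $\Phi_{h,m}$, hence tends to zero faster than any single one of $\alpha_h,\beta_h,\Theta_{h,j}^2,\Phi_{h,m}^2$ with an extra vanishing multiplier, consistent with the qualitative nature of the h.o.-terms notation in \eqref{error2}.
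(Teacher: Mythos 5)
Your overall route is exactly what the paper intends: Corollary~\ref{corolconvergence2} is stated without a separate proof and is meant to follow by precisely the Taylor-expansion bookkeeping you describe, combining the lower bound \eqref{resEVmainA} for $\lambda_j$ in terms of $\tilde\lambda_j$ with the lower bound \eqref{estEVmain} for $\tilde\lambda_j$ in terms of $\lambda_j$; treating $\tilde\lambda_{\kmax}$ as an $h$-uniformly bounded prefactor via the first one-sided estimate is also fine.

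There is, however, one genuine flaw in your justification, and it is exactly at the point you single out. Your argument that the cross terms $(\tilde\alpha_h+\tilde\beta_h)\Theta_{h,j}$ and $(\tilde\alpha_h+\tilde\beta_h)\Phi_{h,m}$ are higher order because they are products of two vanishing quantities is not valid: a product of two quantities tending to zero need not tend to zero faster than the maximum of a \emph{different} collection of vanishing quantities. The parameters $\tilde\alpha_h,\tilde\beta_h$ from Assumption~\ref{ass2} can be substantially larger than $\alpha_h,\beta_h$ from Assumption~\ref{ass1} (in the applications $\tilde\alpha_h\sim h^{k_g}$ versus $\alpha_h\sim h^{k_g+1}$), and for instance $\tilde\alpha_h=h^{1/2}$, $\alpha_h=h^{2}$, $\Theta_{h,j}=h$ gives $\tilde\alpha_h\Theta_{h,j}=h^{3/2}$, which dominates $\max\{\alpha_h,\Theta_{h,j}^2\}=h^{2}$. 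The correct repair is Young's inequality, $(\tilde\alpha_h+\tilde\beta_h)\Theta_{h,j}\le (\tilde\alpha_h+\tilde\beta_h)^2+\tfrac14\Theta_{h,j}^2 \le 2\tilde\alpha_h^2+2\tilde\beta_h^2+\tfrac14\Theta_{h,j}^2$, combined with the relation $\tilde\alpha_h^2\le c\,\alpha_h$, $\tilde\beta_h^2\le c\,\beta_h$ that the paper assumes explicitly in Corollary~\ref{corolconvergence} and invokes again in section~\ref{sectdiscussion} when referring to Corollary~\ref{corolconvergence2} (and which holds in the applications of section~\ref{sectTraceFEM} since $h^{2k_g}\le c\,h^{k_g+1}$ for $k_g\ge 1$). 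With this the cross terms are bounded by $c\max\{\alpha_h,\beta_h,\Theta_{h,j}^2\}$ (respectively with $\Phi_{h,m}^2$) and are folded into the constant $c$ of \eqref{error2} rather than into the higher-order remainder. Apart from this point, which requires either stating the extra hypothesis or the Young-inequality argument, your bookkeeping is complete and matches the intended derivation.
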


\section{Eigenvector error analysis of the discrete eigenproblem with penalization and  inconsistency} \label{secteigenvector}
In this section we present an analysis of the errors in the eigenvector approximations resulting from the discrete problem \eqref{discrev}.
Our analysisis is based on the theory presented in  \cite{Yserentant}. In that paper an error analysis of the Rayleigh-Ritz method \emph{without penalization or consistency errors} is presented that shows how the error in an eigenvector (and eigenvalue) approximation can be bounded in terms of its best approximation in the ansatz space, in the same spirit as the more general results in \cite{Knyazev}. We generalize the results of  \cite{Yserentant} in the sense that we allow penalization and consistency errors, i.e., we generalize the analysis of \cite{Yserentant} to the abstract setting presented in section~\ref{sectabstractdiscrete}.

Besides the projection w.r.t. the energy norm \eqref{Phdef} we also need the orthogonal projection on $V_h$ w.r.t.  $b_h(\cdot,\cdot)$,  denoted by $Q_h:\, \He \to V_h$, i.e.,   (cf. Fig.~\ref{figprojection}) 
\[
  b_h(w,v_h)=b_h(Q_hw,v_h) \quad \text{for all}~v_h \in V_h, 
\]
and is given by $Q_hw= \sum_{i=1}^n b_h(w,\tilde u_i) \tilde u_i \in V_h$. 

Take a \emph{fixed}  $k \leq \kmax$ and let $(\lambda_k,u_k)=:(\lambda, u)$ be the (exact) eigenpair that one is interested in. Note that $\lambda$ may be a multiple eigenvalue, in which case we have $\lambda_j=\lambda$ for certain $j \neq k$ and the eigenspace corresponding to $\lambda$ has dimension larger than one.  We assume a given (small) neighborhood $\Lambda$ of $\lambda$, i.e., $\lambda \in \Lambda$. Corresponding to this neighborhood we define the $b_h$-orthogonal projection onto ${\rm span}\{\, \tilde u_i~|~\tlam_i \in \Lambda\,\}$:
\begin{equation} \label{defQhg}
  Q_h^\Lambda:\, \He \to V_h,  \quad Q_h^\Lambda w = \sum_{\tlam_i \in \Lambda} b_h(w,\tilde u_i) \tilde u_i,
\end{equation}
and the linear mapping
\[
  R_h^\Lambda:\, \He \to V_h,  \quad R_h^\Lambda w = \sum_{\tlam_i \notin \Lambda} \frac{\tlam_i}{\tlam_i -\lambda}b_h(w,\tilde u_i) \tilde u_i.
\]
 Finally, for measuring nonconformity we introduce the natural defect quantity
\begin{equation} \label{defdincon}
 d_\lambda(w,v):= a_h(w,v)- \lambda b_h(w,v),\quad w, v \in \He.
\end{equation}
Note that in the conforming Ritz-Galerkin case, i.e., $\He=H$, $a_h(\cdot,\cdot)=a(\cdot,\cdot)$, $b_h(\cdot,\cdot)=b(\cdot,\cdot)$, we have $d_\lambda(u,v)=0$ for all $v \in H$.

We assume a linear operator $I_{V_h}: \, {\rm range}(\cE) \to V_h$. In the applications, this will be a (quasi-)interpolation operator.

We present a result which is a variant of Lemma 3.1 in \cite{Yserentant}.
\begin{lemma} \label{lemmaA}
 Let $u=u_k \in H$ be an eigenvector corresponding to an eigenvalue $\lambda=\lambda_k$. For $u^e=\cE u$ the following relation holds:
 \begin{equation} \label{iderror}
  u^e- Q_h^\Lambda u^e= R_h^\Lambda (u^e-P_hu^e) +(I-Q_h)(u^e-P_hu^e) + \sum_{\tlam_i \notin \Lambda} \frac{1}{\tlam_i- \lambda} d_\lambda (u^e,\tilde u_i) \tilde u_i.
 \end{equation}
\end{lemma}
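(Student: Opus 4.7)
The plan is to reduce everything to a telescoping identity by expanding the three terms on the right-hand side of \eqref{iderror} using the definitions of $Q_h^\Lambda$, $R_h^\Lambda$, $P_h$ and the discrete eigenvalue relation. Crucially, I expect that the hypothesis that $u$ is an exact eigenvector will not actually be needed for the identity itself; only the definition of $P_h$ and the fact that the $\tilde u_i$ are $a_h$/$b_h$-orthogonal eigenvectors of the discrete problem enter. The central algebraic observation I will exploit is that for every $i$,
\[
 a_h(u^e,\tilde u_i)=a_h(P_hu^e,\tilde u_i)=\tlam_i\, b_h(P_hu^e,\tilde u_i),
\]
where the first equality is the definition of $P_h$ and the second uses \eqref{approxEVPa} together with $P_hu^e\in V_h$.

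Using this, I will rewrite the defect coefficient appearing in the last sum of \eqref{iderror}. Substituting the above identity into $d_\lambda(u^e,\tilde u_i)=a_h(u^e,\tilde u_i)-\lambda b_h(u^e,\tilde u_i)$ and adding and subtracting $\tlam_i b_h(u^e,\tilde u_i)$ in the numerator, I obtain
\[
 \frac{d_\lambda(u^e,\tilde u_i)}{\tlam_i-\lambda}=b_h(u^e,\tilde u_i)-\frac{\tlam_i}{\tlam_i-\lambda}\,b_h(u^e-P_hu^e,\tilde u_i).
\]
Summing over $\tlam_i\notin\Lambda$ and using the definition of $R_h^\Lambda$, this gives
\[
 \sum_{\tlam_i\notin\Lambda}\frac{d_\lambda(u^e,\tilde u_i)}{\tlam_i-\lambda}\tilde u_i
 =\sum_{\tlam_i\notin\Lambda} b_h(u^e,\tilde u_i)\tilde u_i \;-\; R_h^\Lambda(u^e-P_hu^e).
\]
The $R_h^\Lambda(u^e-P_hu^e)$ term on the right of \eqref{iderror} then cancels with the one just produced.

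Next I handle the middle term. Since $P_hu^e\in V_h$ and $Q_h$ is the $b_h$-orthogonal projection onto $V_h$, we have $Q_hP_hu^e=P_hu^e$, so
\[
 (I-Q_h)(u^e-P_hu^e)=u^e-Q_hu^e.
\]
After the cancellation above, the right-hand side of \eqref{iderror} therefore collapses to
\[
 u^e-Q_hu^e\;+\;\sum_{\tlam_i\notin\Lambda} b_h(u^e,\tilde u_i)\tilde u_i.
\]
Finally, the very definition of $Q_h^\Lambda$ in \eqref{defQhg} together with the spectral expansion $Q_hu^e=\sum_i b_h(u^e,\tilde u_i)\tilde u_i$ gives $Q_hu^e-Q_h^\Lambda u^e=\sum_{\tlam_i\notin\Lambda}b_h(u^e,\tilde u_i)\tilde u_i$, so the expression simplifies to $u^e-Q_h^\Lambda u^e$, which is the left-hand side of \eqref{iderror}.

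The main (modest) obstacle is bookkeeping: organising the add-and-subtract identity for $d_\lambda(u^e,\tilde u_i)/(\tlam_i-\lambda)$ so that exactly the term $R_h^\Lambda(u^e-P_hu^e)$ appears with the correct sign, and then recognising that the leftover $b_h$-coefficients build up precisely the missing part $Q_hu^e-Q_h^\Lambda u^e$ needed to convert $u^e-Q_hu^e$ into $u^e-Q_h^\Lambda u^e$. No further assumption on $u$ beyond $u^e\in \He$ is needed for this algebraic identity; the eigenvector property of $u_k$ will matter only when the identity is used to obtain quantitative error bounds in subsequent estimates.
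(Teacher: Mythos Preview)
Your proof is correct and follows essentially the same route as the paper: both arguments hinge on the coefficient identity obtained from $a_h(u^e,\tilde u_i)=a_h(P_hu^e,\tilde u_i)=\tlam_i\,b_h(P_hu^e,\tilde u_i)$, which yields the relation between $b_h(u^e,\tilde u_i)$, $b_h(u^e-P_hu^e,\tilde u_i)$ and $d_\lambda(u^e,\tilde u_i)$; the paper derives \eqref{iderror} forward from this, while you verify it by expanding the right-hand side, but the algebraic content is identical. Your remark that the exact eigenvector property of $u$ is not actually used in the identity is also correct (the paper's proof likewise uses only the definitions of $P_h$, $Q_h$, $d_\lambda$ and the discrete eigenrelation).
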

\begin{proof}
Using the definitions we obtain
\begin{align*}
 \lambda b_h(u^e,\tilde u_i) &= a_h(u^e,\tilde u_i) -\big(a_h(u^e,\tilde u_i)-\lambda b_h(u^e,\tilde u_i)\big) \\
   &= a_h(u^e,\tilde u_i)- d_\lambda(u^e,\tilde u_i) = a_h(\tilde u_i, P_h u^e) -d_\lambda(u^e,\tilde u_i) \\
    &= \tlam_i b_h(\tilde u_i, P_h u^e)- d_\lambda(u^e,\tilde u_i) \\
    & =  \tlam_i b_h(P_h u^e -u^e, \tilde u_i) +\tlam_i b_h(u^e,\tilde u_i) -d_\lambda(u^e,\tilde u_i).
\end{align*}
This yields $(\lambda - \tlam_i) b_h(u^e,\tilde u_i)=\tlam_i b_h(P_h u^e -u^e, \tilde u_i)-d_\lambda(u^e,\tilde u_i)$, and thus for $\tlam_i \notin \Lambda$:
\[
 b_h(u^e,\tilde u_i)= \frac{\tlam_i}{\tlam_i-\lambda}b_h(u^e-P_h u^e, \tilde u_i)+ \frac{1}{\tlam_i-\lambda}d_\lambda(u^e,\tilde u_i). 
\]
This yields
\begin{equation} \label{h1}
 \begin{split}
  (Q_h-Q_h^\Lambda) u^e &= \sum_{\tlam_i \notin \Lambda} b_h(u^e,\tilde u_i)\tilde u_i \\ 
   &= \sum_{\tlam_i \notin \Lambda}\frac{\tlam_i}{\tlam_i-\lambda}b_h(u^e-P_h u^e, \tilde u_i)\tilde u_i + \sum_{\tlam_i \notin \Lambda}\frac{1}{\tlam_i-\lambda}d_\lambda(u^e,\tilde u_i) \tilde u_i \\
   & = R_h^\Lambda (u^e-P_hu^e) + \sum_{\tlam_i \notin \Lambda}\frac{1}{\tlam_i-\lambda}d_\lambda(u^e,\tilde u_i) \tilde u_i.
 \end{split}
\end{equation}
Note that
\begin{align*}
 u^e - Q_h^\Lambda u^e &= (Q_h-Q_h^\Lambda) u^e + (I-Q_h) u^e \\
  &= (Q_h-Q_h^\Lambda) u^e + (I-Q_h) (u^e -  P_hu^e), 
\end{align*}
and  combining this with the result \eqref{h1} completes the proof.
\end{proof}

Note that $Q_h^\Lambda u^e$ ist the $b_h$-orthogonal projection of $u^e$ on the subspace spanned by the approximate eigenvectors $\tilde u_i$ with $\tlam_i \in \Lambda$. Hence the expression of the right handside in \eqref{iderror} describes how well the eigenvector ``extension'' $u^e =\cE u$ can be approximated in this subspace. This expression contains the projections $P_h$, $Q_h$ and the term  $d_\lambda (u^e, \cdot)$, which is related to nonconformity. We now derive bounds for this expression in terms of projection errors and consistency errors.
As shown in e.g. \cite{Knyazev,BonitoEV,Yserentant}  the rate of convergence of the Rayleigh-Ritz method critically depends on whether the considered eigenvalue $\lambda$ is well separated from the other eigenvalues or is  part of a cluster of eigenvalues. To measure this, the quantity 
\begin{equation} \label{gapparameter}
  \gamma_\Lambda:= \max_{\tlam_i \notin \Lambda} \frac{\tlam_i}{|\tlam_i - \lambda|}
\end{equation}
is introduced and will be used in the bounds derived below. First we give a bound in the norm $\|\cdot\|_h$ and then an error bound in the energy norm $\Enorm{\cdot}$ is derived. We will need a dual norm on $V_h$. For $L \in V_h'$ we define
\[
  \|L\|_{V_h'}:= \max_{v_h \in V_h} \frac{L(v_h)}{\Enorm{v_h}},
\]
i.e., we consider duality w.r.t. the scalar product $a_h(\cdot,\cdot)$ on $V_h$. Using the fact that $\hat u_i=\tilde \lambda_i^{-\frac12} \tilde u_i$, $i=1,\ldots,n$, is an  $a_h$-orthonormal basis of $V_h$ we obtain $\|L\|_{V_h'}= \left( \sum_{i=1}^n L(\hat u_i)^2\right)^\frac12$. 
\begin{theorem} \label{thm1} For $(\lambda,u)$ as in Lemma~\ref{lemmaA} the following holds:
\begin{equation} \label{estthm1}
 \| u^e- Q_h^\Lambda u^e\|_h \leq \max \{1,\gamma_\Lambda\} \| u^e- P_h u^e\|_h + \gamma_\Lambda \tlam_1^{-\frac12}  \|d_\lambda(u^e,\cdot)\|_{V_h'}.
\end{equation}
\end{theorem}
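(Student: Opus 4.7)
The plan is to start from the identity \eqref{iderror} in Lemma~\ref{lemmaA} and bound its three summands separately in the $\|\cdot\|_h$-norm, exploiting the $b_h$-orthonormality of $\{\tilde u_i\}$ and the definition of $Q_h$ as the $b_h$-orthogonal projection onto $V_h$. Set $w := u^e - P_h u^e$ and write $u^e - Q_h^\Lambda u^e = A + B + C$ with
\[ A := R_h^\Lambda w,\quad B := (I - Q_h) w, \quad C := \sum_{\tlam_i \notin \Lambda} \frac{1}{\tlam_i - \lambda} d_\lambda(u^e, \tilde u_i)\,\tilde u_i. \]
Note $A, C \in V_h$ while $B \in V_h^{\perp_{b_h}}$, so by the triangle inequality and Pythagoras, $\| u^e - Q_h^\Lambda u^e \|_h \leq \|A + B\|_h + \|C\|_h$ with $\|A + B\|_h^2 = \|A\|_h^2 + \|B\|_h^2$.

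For the first summand, since $\{\tilde u_i\}$ is $b_h$-orthonormal, expand
\[ \|A\|_h^2 = \sum_{\tlam_i \notin \Lambda} \Bigl(\frac{\tlam_i}{\tlam_i - \lambda}\Bigr)^2 b_h(w,\tilde u_i)^2 \leq \gamma_\Lambda^2 \|(Q_h - Q_h^\Lambda) w\|_h^2, \]
by the definition \eqref{gapparameter} of $\gamma_\Lambda$ and the identity $(Q_h - Q_h^\Lambda) w = \sum_{\tlam_i \notin \Lambda} b_h(w,\tilde u_i)\tilde u_i$. The trivial estimate $\|B\|_h^2 = \|(I - Q_h) w\|_h^2$ combined with the $b_h$-Pythagorean decomposition $\|w\|_h^2 = \|Q_h^\Lambda w\|_h^2 + \|(Q_h - Q_h^\Lambda) w\|_h^2 + \|(I - Q_h) w\|_h^2$ then yields
\[ \|A+B\|_h^2 \leq \max\{1,\gamma_\Lambda\}^2\bigl(\|(Q_h - Q_h^\Lambda) w\|_h^2 + \|(I - Q_h) w\|_h^2\bigr) \leq \max\{1,\gamma_\Lambda\}^2 \|w\|_h^2, \]
which gives the first term in the desired bound.

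The main technical point is the bound on $\|C\|_h$. Again by $b_h$-orthonormality,
\[ \|C\|_h^2 = \sum_{\tlam_i \notin \Lambda} \frac{d_\lambda(u^e,\tilde u_i)^2}{(\tlam_i - \lambda)^2} = \sum_{\tlam_i \notin \Lambda} \frac{\tlam_i}{(\tlam_i - \lambda)^2}\,d_\lambda(u^e,\hat u_i)^2, \]
where we used $\hat u_i = \tlam_i^{-1/2}\tilde u_i$. The key observation is that the defining inequality $\tlam_i/|\tlam_i - \lambda| \leq \gamma_\Lambda$ rearranges to $|\tlam_i - \lambda| \geq \tlam_i/\gamma_\Lambda$, so
\[ \frac{\tlam_i}{(\tlam_i - \lambda)^2} = \frac{1}{|\tlam_i - \lambda|}\cdot \frac{\tlam_i}{|\tlam_i - \lambda|} \leq \frac{\gamma_\Lambda}{\tlam_i/\gamma_\Lambda} = \frac{\gamma_\Lambda^2}{\tlam_i} \leq \frac{\gamma_\Lambda^2}{\tlam_1}. \]
Using that $\{\hat u_i\}$ is an $a_h$-orthonormal basis of $V_h$ and hence $\|d_\lambda(u^e,\cdot)\|_{V_h'}^2 = \sum_i d_\lambda(u^e,\hat u_i)^2$, this gives $\|C\|_h \leq \gamma_\Lambda \tlam_1^{-1/2} \|d_\lambda(u^e,\cdot)\|_{V_h'}$. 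Combining the bounds on $\|A+B\|_h$ and $\|C\|_h$ via the triangle inequality produces \eqref{estthm1}. The only subtle step is the trick $\tlam_i/(\tlam_i - \lambda)^2 \leq \gamma_\Lambda^2/\tlam_1$; the rest is orthogonal bookkeeping.
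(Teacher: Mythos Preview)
Your proof is correct and follows essentially the same approach as the paper: split the identity \eqref{iderror} into the projection part $A+B$ and the defect part $C$, exploit $b_h$-orthogonality of $A$ and $B$ to get $\|A+B\|_h \leq \max\{1,\gamma_\Lambda\}\|u^e-P_hu^e\|_h$, and bound $\|C\|_h$ via $\tlam_i/(\tlam_i-\lambda)^2 = (\tlam_i/(\tlam_i-\lambda))^2\,\tlam_i^{-1} \leq \gamma_\Lambda^2\,\tlam_1^{-1}$ together with the Parseval identity for $\|\cdot\|_{V_h'}$. The only cosmetic difference is that the paper bounds $\|A\|_h^2 \leq \gamma_\Lambda^2\|Q_hw\|_h^2$ directly, whereas you use the slightly sharper $\|A\|_h^2 \leq \gamma_\Lambda^2\|(Q_h-Q_h^\Lambda)w\|_h^2$ and the three-way Pythagorean split; both lead to the same final estimate.
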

\begin{proof}
 For $w \in \He$ we have
 \begin{align*}
  \|R_h^\Lambda w\|_h^2 &= \sum_{\tlam_i \notin \Lambda} \left(\frac{\tlam_i}{\tlam_i -\lambda}\right)^2 b_h(w,\tilde u_i)^2 = \sum_{\tlam_i \notin \Lambda} \left(\frac{\tlam_i}{\tlam_i -\lambda}\right)^2 b_h(Q_hw,\tilde u_i)^2 \\
   & \leq \gamma_\Lambda^2 \sum_{i=1}^n b_h(Q_h w,\tilde u_i)^2 = \gamma_\Lambda^2 \|Q_hw\|_h^2.
 \end{align*}
Combining this with orthogonality properties we obtain
\begin{align*}
  & \|R_h^\Lambda(u^e - P_h u^e) +(I-Q_h)(u^e - P_h u^e)\|_h \\ & = \left(\|R_h^\Lambda(u^e - P_h u^e)\|_h^2
  + \|(I-Q_h)(u^e - P_h u^e)\|_h^2 \right)^\frac12 \\
   & \leq \max \{1,\gamma_\Lambda\} \left(\|Q_h(u^e - P_h u^e)\|_h^2 + \|(I-Q_h)(u^e - P_h u^e)\|_h^2 \right)^\frac12 \\ & = \max \{1,\gamma_\Lambda\} \|u^e - P_hu^e\|_h.
\end{align*}
For the nonconformity term in \eqref{iderror} we obtain
\begin{align}
&  \|\sum_{\tlam_i \notin \Lambda} \frac{1}{\tlam_i- \lambda} d_\lambda (u^e,\tilde u_i) \tilde u_i\|_h^2 =
 \sum_{\tlam_i \notin \Lambda} \left(\frac{1}{\tlam_i- \lambda}\right)^2 d_\lambda(u^e,\tilde u_i)^2 \nonumber \\
  & =\sum_{\tlam_i \notin \Lambda} \left(\frac{\tlam_i}{\tlam_i- \lambda}\right)^2 \tlam_i^{-1} d_\lambda(u^e,\hat u_i)^2  \label{NCterm}\\
   & \leq \gamma_\Lambda^2 \tlam_1^{-1} \sum_{i=1}^n d_\lambda(u^e, \hat u_i)^2 = \gamma_\Lambda^2 \tlam_1^{-1} \|d_\lambda(u^e,\cdot)\|_{V_h'}^2.\label{NCterm1}
\end{align}
Combining these estimates completes the proof.
\end{proof}

\begin{remark} \label{remNonopt} \rm
 We comment on how the term $\gamma_\Lambda \tlam_1^{-\frac12} \|d_\lambda(u^e,\cdot)\|_{V_h'}$ that occurs in the bound \eqref{estthm1} can be improved. First note that from the estimate $\frac{\tlam_i}{|\tlam_i -\lambda|} \geq \frac{\tlam_1}{\tlam_1+\lambda}$, in which the lower bound for $\lambda=\lambda_k$, $k \leq \kmax$, is bounded away from zero, it follows that using  $\frac{\tlam_i}{|\tlam_i -\lambda|} \leq \gamma_\Lambda$ in \eqref{NCterm}-\eqref{NCterm1} is acceptable. This leads to the term $q^2:=\sum_{\tlam_i \notin \Lambda}  \tlam_i^{-1} d_\lambda(u^e,\hat u_i)^2$. In the estimate \eqref{NCterm1} we used $\tlam_i^{-1} \leq \tlam_1^{-1}$ and replaced $\sum_{\tlam_i \notin \Lambda}$ by the larger sum $\sum_{i=1}^n$. 
 We use the operator representation of the discrete surface Laplacian $L_h: V_h \to V_h$ defined by $a_h(u_h,v_h)=b_h(L_h u_h,v_h)$ for all $u_h,v_h \in V_h$. Hence $\Enorm{u_h}=\|L_h^\frac12 u_h\|_h$.
 In \eqref{NCterm1} we used the estimate
 \begin{equation} \label{st}
  q \leq \tlam_1^{-\frac12} \|d_\lambda (u^e, \cdot )\|_{V_h'}=  \tlam_1^{-\frac12} \max_{v_h \in V_h} \frac{d_\lambda (u^e, v_h )}{\|L_h^\frac12 v_h\|_h}.
 \end{equation}
This possibly too pessimistic estimate can be avoided as follows. We introduce the subspace $W_h = {\rm span}\{\tilde u_i~|~ \tlam_i \notin \Lambda\,\}$. Elementary arguments show that 
\begin{equation} \label{st1} 
q = \max_{v_h \in W_h}\frac{d_\lambda (u^e, v_h )}{\|L_h v_h\|_h}
\end{equation}
holds. Comparing this with \eqref{st} we observe that in \eqref{st1} we have the smaller space $W_h$ and the significantly stronger norm $\|L_h v_h\|_h$. In our analysis we use \eqref{st}, because we are not able to derive bounds for \eqref{st1} that are significantly better than the bounds for \eqref{st} derived in Lemma~\ref{boundd} below. These bounds lead to optimal eigenvector error bounds in the energy norm, but to suboptimal estimates in the norm $\|\cdot\|_h$, cf. the discussion after Corollary~\ref{corolmain}. 
\end{remark}

We now derive an error bound in the energy norm.
\begin{theorem} \label{thm2} For $(\lambda,u)$ as in Lemma~\ref{lemmaA} the following holds:
\begin{equation} \label{estthm2}
\begin{split}
 \Enorm{ u^e- Q_h^\Lambda u^e} & \leq (\gamma_\Lambda +1) \left( \tlam_n^{\frac12} \| u^e- I_{V_h} u^e\|_h + 3 \Enorm{u^e -I_{V_h}u^e} \right) \\ & + \gamma_\Lambda   \|d_\lambda(u^e,\cdot)\|_{V_h'}.
 \end{split}
\end{equation}
\end{theorem}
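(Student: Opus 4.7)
The plan is to mimic the derivation of Lemma~\ref{lemmaA}, but with $I_{V_h}u^e$ playing the role of $P_h u^e$, since the target bound is expressed in terms of the interpolation error. Because $I_{V_h}$ is \emph{not} the $a_h$-orthogonal projection onto $V_h$, the resulting identity will contain one additional correction term beyond the three appearing in \eqref{iderror}.

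Concretely, starting from $\lambda b_h(u^e,\tilde u_i) = a_h(u^e,\tilde u_i) - d_\lambda(u^e,\tilde u_i)$ and splitting
\[
a_h(u^e,\tilde u_i) = a_h(I_{V_h}u^e,\tilde u_i) + a_h(u^e - I_{V_h}u^e,\tilde u_i) = \tlam_i b_h(I_{V_h}u^e,\tilde u_i) + a_h(u^e - I_{V_h}u^e,\tilde u_i),
\]
rearranging for $\tlam_i \notin \Lambda$ and summing yields, with $\tilde w:= u^e - I_{V_h}u^e$ and the auxiliary operator $\hat R_h^\Lambda w := \sum_{\tlam_i \notin \Lambda} (\tlam_i - \lambda)^{-1} a_h(w,\tilde u_i)\tilde u_i$, the identity
\[
u^e - Q_h^\Lambda u^e = R_h^\Lambda \tilde w - \hat R_h^\Lambda \tilde w + (I - Q_h)\tilde w + \sum_{\tlam_i \notin \Lambda} \frac{1}{\tlam_i - \lambda} d_\lambda(u^e,\tilde u_i)\tilde u_i,
\]
using $Q_h I_{V_h}u^e = I_{V_h}u^e$ for the $(I-Q_h)$ term. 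This identity is the analogue of Lemma~\ref{lemmaA}, with the extra term $-\hat R_h^\Lambda\tilde w$ accounting for the non-orthogonality of $I_{V_h}$.

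Next I would bound each of the four pieces in the energy norm. Expanding in the $a_h$-orthonormal basis $\hat u_i = \tlam_i^{-1/2}\tilde u_i$ gives $\Enorm{R_h^\Lambda \tilde w} \leq \gamma_\Lambda \Enorm{Q_h \tilde w}$ and, by the same spectral argument, $\Enorm{\hat R_h^\Lambda \tilde w}^2 = \sum_{\tlam_i \notin \Lambda} \bigl(\tfrac{\tlam_i}{\tlam_i - \lambda}\bigr)^2 a_h(\tilde w,\hat u_i)^2 \leq \gamma_\Lambda^2 \Enorm{P_h \tilde w}^2 \leq \gamma_\Lambda^2 \Enorm{\tilde w}^2$. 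The nonconformity sum is estimated exactly as in the proof of Theorem~\ref{thm1} to give $\gamma_\Lambda \|d_\lambda(u^e,\cdot)\|_{V_h'}$. For the remaining piece I use triangle inequality, $\Enorm{(I-Q_h)\tilde w} \leq \Enorm{\tilde w} + \Enorm{Q_h\tilde w}$. The terms containing $\Enorm{Q_h\tilde w}$ are handled by the elementary inequality $\Enorm{v_h} \leq \tlam_n^{1/2} \|v_h\|_h$ for $v_h \in V_h$ (which follows from $\Enorm{v_h}^2 = \sum_i \tlam_i b_h(v_h,\tilde u_i)^2$) combined with the $b_h$-orthogonal projection bound $\|Q_h\tilde w\|_h \leq \|\tilde w\|_h$, yielding $\Enorm{Q_h\tilde w} \leq \tlam_n^{1/2}\|\tilde w\|_h$.

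Summing the four contributions gives
\[
\Enorm{u^e - Q_h^\Lambda u^e} \leq (\gamma_\Lambda + 1)\tlam_n^{1/2}\|\tilde w\|_h + (\gamma_\Lambda + 1)\Enorm{\tilde w} + \gamma_\Lambda \|d_\lambda(u^e,\cdot)\|_{V_h'},
\]
which is majorized by the right-hand side of \eqref{estthm2} (with slack in the constant 3). The only real obstacle is the bookkeeping in the first step: one must correctly track the new $a_h(u^e - I_{V_h}u^e, \tilde u_i)$ contribution, which was absent in Lemma~\ref{lemmaA} thanks to $a_h$-orthogonality of $u^e - P_h u^e$ to $V_h$. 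Once the modified identity is in hand, the four norm estimates are direct adaptations of the spectral calculations already carried out in Theorem~\ref{thm1}.
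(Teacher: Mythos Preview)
Your argument is correct and in fact yields the sharper constant $1$ in place of $3$ in front of $\Enorm{u^e-I_{V_h}u^e}$. The route, however, differs from the paper's. The paper does \emph{not} re-derive the basic identity with $I_{V_h}$; it applies Lemma~\ref{lemmaA} as stated (with $P_h$), bounds $\Enorm{R_h^\Lambda(u^e-P_hu^e)}\le\gamma_\Lambda\Enorm{Q_h(u^e-P_hu^e)}$, and only then inserts $I_{V_h}u^e$ via the triangle inequality $Q_h(u^e-P_hu^e)=Q_h(u^e-I_{V_h}u^e)+(I_{V_h}u^e-P_hu^e)$, using that $I_{V_h}u^e-P_hu^e\in V_h$ is fixed by $Q_h$ and that $\Enorm{I_{V_h}u^e-P_hu^e}\le 2\Enorm{u^e-I_{V_h}u^e}$ by optimality of $P_h$. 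This is what produces the factor $3$. Your approach trades this post-processing step for an extra operator $\hat R_h^\Lambda$ in the identity itself, which you then bound by the same spectral calculation as for $R_h^\Lambda$; the gain is a cleaner final estimate, the cost is that you cannot simply invoke Lemma~\ref{lemmaA} off the shelf but must redo its derivation.
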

\begin{proof}
For $w \in \He$ we have
 \begin{align*}
  \Enorm{R_h^\Lambda w}^2 &= \Enorm{\sum_{\tlam_i \notin \Lambda} \frac{\tlam_i}{\tlam_i -\lambda} b_h(Q_h w, \tilde u_i) \tilde u_i }^2 
    = \Enorm{\sum_{\tlam_i \notin \Lambda} \frac{\tlam_i}{\tlam_i -\lambda} \tlam_i^{-1} a_h(Q_h w, \tilde u_i) \tilde u_i }^2 \\
    & = \Enorm{\sum_{\tlam_i \notin \Lambda} \frac{\tlam_i}{\tlam_i -\lambda} a_h(Q_h w, \hat u_i) \hat u_i }^2   
   = \sum_{\tlam_i \notin \Lambda} \left(\frac{\tlam_i}{\tlam_i -\lambda}\right)^2 a_h(Q_hw , \hat u_i)^2  \\
   & \leq \gamma_\Lambda^2 \sum_{i=1}^n a_h(Q_h w,\hat u_i)^2 = \gamma_\Lambda^2 \Enorm{Q_hw}^2.
 \end{align*}
 Note that for $v_h \in V_h$ we have $\Enorm{v_h}^2 =a_h(v_h,v_h) \leq \tlam_n b_h(v_h,v_h) = \tlam_n \|v_h\|_h^2$ and
 $\Enorm{u^e-P_h u^e} \leq \Enorm{u^e- I_{V_h}}$. Using this and the estimate above we get
 \begin{align}
  & \Enorm{R_h^\Lambda(u^e-P_hu^e) + (I-Q_h)(u^e-P_hu^e)} \nonumber \\ & \leq \Enorm{R_h^\Lambda(u^e-P_hu^e)} +\Enorm{(I-Q_h)(u^e-P_hu^e)} \nonumber\\
  & \leq (\gamma_\Lambda +1)\Enorm{Q_h(u^e -P_hu^e)} + \Enorm{u^e-P_hu^e} \nonumber\\
  & \leq (\gamma_\Lambda +1) \left(\Enorm{Q_h(u^e -I_{V_h}u^e)} + \Enorm{Q_h(I_{V_h}u^e -P_h u^e)}\right) + \Enorm{u^e-I_{V_h}u^e} \nonumber\\
  & \leq  (\gamma_\Lambda +1) \left(\tlam_n^\frac12 \|Q_h(u^e -I_{V_h}u^e)\|_h + \Enorm{I_{V_h}u^e -P_h u^e}\right) + \Enorm{u^e-I_{V_h}u^e} \nonumber \\
  & \leq  (\gamma_\Lambda +1) \left(\tlam_n^\frac12 \|u^e -I_{V_h}u^e\|_h + \Enorm{I_{V_h}u^e -P_h u^e}\right) + \Enorm{u^e-I_{V_h}u^e} \nonumber \\
  & \leq  (\gamma_\Lambda +1) \left(\tlam_n^\frac12 \|u^e -I_{V_h}u^e\|_h +3 \Enorm{u^e -I_{V_h} u^e}\right). \label{est7} 
 \end{align}
We now consider the nonconformity term in \eqref{iderror}:
\begin{align*}
 & \Enorm{\sum_{\tlam_i\notin \Lambda} \frac{1}{\tlam_i-\lambda}d_\lambda (u^e,\tilde u_i)\tilde u_i}  =
  \Enorm{\sum_{\tlam_i\notin \Lambda} \frac{\tlam_i}{\tlam_i-\lambda}d_\lambda (u^e,\hat u_i)\hat u_i}\\
  & = \left(\sum_{\tlam_i\notin \Lambda} \left(\frac{\tlam_i}{\tlam_i-\lambda}\right)^2 d_\lambda (u^e,\hat u_i)^2 \right)^\frac12 
 \leq \gamma_\Lambda \left( \sum_{i=1}^n d_\lambda (u^e,\hat u_i)^2 \right)^\frac12 = \gamma_\Lambda \|d_\lambda(u^e,\cdot)\|_{V_h'}.
\end{align*}
Combining this with the estimate \eqref{est7} completes the proof.
 \end{proof}

\begin{remark}\rm 
Even for the conforming case $d_\lambda(\cdot,\cdot)=0$, the bound in Theorem~\ref{thm2} differs from the one derived in \cite{Yserentant}. In that paper stability (i.e., uniform boundedness) of the $b_h$-orthogonal projection $Q_h$ in the energy norm is assumed. We prefer the formulation above with an (interpolation) operator $I_{V_h}$ and the factor $\tlam_n^\frac12$ in the error bound. In \cite{Yserentant} a suitable interpolation operator $I_{V_h}$ is used to show that the stability assumption is satisfied in a finite element setting.
\end{remark}

In both results in Theorem~\ref{thm1} and Theorem~\ref{thm2}, in the error bound for the eigenvector approximation  we have a subspace (i.e., $V_h$) \emph{approximation} part and a \emph{nonconformity} part. In both theorems,  the nonconformity is quantified by the same term $\|d_{\lambda}(u^e,\cdot)\|_{V_h'}$. The approximation error is determined by the term $\|u^e- P_h u^e\|_h$ (Theorem~\ref{thm1}) and by $\|u^e - I_{V_h} u^e\|_h$ and $\Enorm{u^e - I_{V_h} u^e}$ (Theorem~\ref{thm2}). In our applications, bounds for these approximation terms are derived from (surface) finite element error analysis, cf. section~\ref{sectTraceFEM}. The  ``constants'' used in the two theorems are very explicit and depend essentially only on the gap parameter $\gamma_{\Lambda}$ and the largest discrete eigenvalue $\tilde \lambda_n$. Concerning the latter we note the following.  In our finite element applications $\tlam_n^{\frac12}$ scales like  $\tlam_n^{\frac12} \sim h^{-1}$. The growth of the factor $\tlam_n^{\frac12}$ (for $ h \downarrow 0$) can be compensated by the higher order (interpolation) error $\|u^e - I_{V_h}u^e\|_h$ compared to $\Enorm{u^e - I_{V_h}u^e}$, cf. \eqref{estthm2}. 

The  gap parameter $\gamma_\Lambda$, which is the same  as in the literature \cite{Knyazev,BonitoEV,Yserentant}, plays an important role. An elaborate discussion of this parameter is given in \cite[Section 3.2]{Knyazev}, cf. also \cite[Remark 3.4]{BonitoEV}. 
We briefly address this gap parameter below. First we derive a bound for the nonconformity term. For this we 
 use the consistency conditions formulated in Assumption~\ref{ass2}.

\begin{lemma} \label{boundd}
 Let $u = u_k \in H$ be an eigenvector corresponding to $\lambda= \lambda_k$, $k \leq \kmax$. 
 The following holds, with $\tilde \alpha_h$, $\tilde \beta_h$ as in Assumption~\ref{ass2}:
 \begin{equation} \label{estdl}
  \left\| d_\lambda(u^e,\cdot)\right\|_{V_h'} \leq \sqrt{2 \lambda} \left( \tilde \alpha_h+  \lambda c_F^2 \tilde \beta_h\right).
 \end{equation}
\end{lemma}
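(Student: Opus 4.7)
The plan is to expand $d_\lambda(u^e, v_h)$ for an arbitrary $v_h \in V_h$, replace the discrete bilinear forms by the continuous ones using Assumption~\ref{ass2}, and exploit the fact that $u$ is an exact eigenfunction to cancel the leading terms.

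More precisely, for $v_h \in V_h \subset V_h + U_{\kmax}^e$, I would apply \eqref{deltaA} with the pair $(u^e, v_h)$ and \eqref{deltaB} with the same pair to write
\begin{align*}
 a_h(u^e, v_h) &= a(\CEl u^e, \CEl v_h) + E_a(v_h), \quad |E_a(v_h)| \leq \tilde\alpha_h \Enorm{u^e}\Enorm{v_h}, \\
 b_h(u^e, v_h) &= b(\CEl u^e, \CEl v_h) + E_b(v_h), \quad |E_b(v_h)| \leq \tilde\beta_h \|u^e\|_h \|v_h\|_h.
\end{align*}
Since $\cE$ is a right inverse of $\CEl$ we have $\CEl u^e = u$, and $\CEl v_h \in H$, so the eigenvalue equation $a(u, \cdot) = \lambda b(u, \cdot)$ on $H$ gives $a(u, \CEl v_h) - \lambda b(u, \CEl v_h) = 0$. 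Therefore
\[
 d_\lambda(u^e, v_h) = E_a(v_h) - \lambda E_b(v_h),
\]
and the bound reduces to controlling $\Enorm{u^e}$ and $\|u^e\|_h$ and converting $\|v_h\|_h$ into $\Enorm{v_h}$.

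For the first norm, I would use Corollary~\ref{CorolE} together with $\Ecnorm{u}^2 = a(u,u) = \lambda b(u,u) = \lambda$ to get $\Enorm{u^e} \leq \sqrt{1+2\alpha_h}\sqrt{\lambda} \leq \sqrt{2\lambda}$ (since $\alpha_h \leq \tfrac12$). For the $\|u^e\|_h$ term, noting that $u^e \in U_{\kmax}^e$, Assumption~\ref{Friedrichs} yields $\|u^e\|_h \leq c_F \Enorm{u^e} \leq c_F \sqrt{2\lambda}$; similarly $\|v_h\|_h \leq c_F \Enorm{v_h}$. Plugging these in gives
\[
 |d_\lambda(u^e,v_h)| \leq \tilde\alpha_h \sqrt{2\lambda}\,\Enorm{v_h} + \lambda\, \tilde\beta_h \cdot c_F\sqrt{2\lambda}\cdot c_F \Enorm{v_h} = \sqrt{2\lambda}\bigl(\tilde\alpha_h + \lambda c_F^2 \tilde\beta_h\bigr)\Enorm{v_h},
\]
and dividing by $\Enorm{v_h}$ and taking the supremum over $v_h \in V_h$ yields \eqref{estdl}.

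The whole argument is essentially bookkeeping; the only mildly delicate step is the choice to route $\|u^e\|_h$ through the Friedrichs inequality rather than through the direct bound $\|u^e\|_h \leq (1+\beta_h)\|u\| = 1+\beta_h$ of Corollary~\ref{CorolE}. Using Friedrichs produces the $\lambda c_F^2$ factor in the stated estimate (which is what the subsequent eigenvector bounds in Theorem~\ref{thm1} and Theorem~\ref{thm2} are calibrated for), whereas the alternative bound would give a different — and for the applications less convenient — $\lambda$-dependence. No real obstacle arises beyond making sure Assumption~\ref{ass2} and Assumption~\ref{Friedrichs} are both applicable to the pair $(u^e, v_h)$, which holds because $u^e \in U_{\kmax}^e$ and $v_h \in V_h \subset V_h + U_{\kmax}^e$.
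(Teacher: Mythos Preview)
Your proof is correct and follows essentially the same route as the paper's: split $d_\lambda(u^e,v_h)$ using Assumption~\ref{ass2}, cancel the continuous terms via the eigenvalue equation $a(u,\cdot)=\lambda b(u,\cdot)$, bound $\|u^e\|_h$ and $\|v_h\|_h$ by the Friedrichs inequality, and use $\Enorm{u^e}\le\sqrt{2\lambda}$ from Corollary~\ref{CorolE}. The only differences are cosmetic (you name the error terms $E_a,E_b$ explicitly), and your closing remark about the alternative bound $\|u^e\|_h\le(1+\beta_h)$ is a valid side observation not present in the paper.
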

\begin{proof}
 For the eigenpair $(u,\lambda)$ we have $a(u,v)=\lambda b(u,v)$ for all $v \in H$. For $u^e=\cE u$  and $v_h \in V_h$ we obtain, using Assumption~\ref{ass2} and the Friedrich's inequality \eqref{friedrichs}:
 \begin{align*}
   |d_\lambda(u^e, v_h)|&= |a_h(u^e,v_h)-\lambda b_h(u^e,v_h)| \\
    & \leq |a_h(u^e,v_h) -a(\CEl u^e, \cE_h^{-\ell}v_h) | + \lambda | b_h(u^e,v_h) - b(\CEl u^e,\cE_h^{-\ell}v_h))| \\
     & \leq \tilde \alpha_h \Enorm{u^e} \Enorm{v_h} + \lambda \tilde \beta_h \|u^e\|_{h} \|v_h\|_h \\
     & \leq  \big( \tilde \alpha_h + c_F^2 \lambda)\Enorm{u^e} \Enorm{v_h}.
 \end{align*}
 Using \eqref{resCF1} and $a(u,u)=\lambda$ we get $\Enorm{u^e} \leq \sqrt{1+2 \alpha_h} \Ecnorm{u} \leq \sqrt{2 \lambda}$. 
 Combining these results yields the estimate \eqref{estdl}.
\end{proof}
\ \\[1ex]
{\bf The gap parameter $\gamma_\Lambda$}\\
We briefly discuss this gap parameter. For this discussion it is essential that we have \emph{convergence of eigenvalues}, i.e. 
\begin{equation} \label{convEV}
\lim_{h \downarrow 0} \tilde \lambda_j = \lambda_j,\quad 1 \leq j \leq \kmax. 
\end{equation}
 From Corollary~\ref{corolconvergence} it follows that, under the assumptions formulated in that corollary, we indeed have this convergence of eigenvalues property. 

As a first example, consider the case  of a simple eigenvalue $\lambda=\lambda_k$ that is well separated from the other ones, say $\min_{i \neq k}|\lambda-\lambda_i| \geq \delta\lambda$, with $\delta>0$. Hence, $\delta$ is a measure for the separation between $\lambda$ and neighboring eigenvalues. One can take the neighborhood $\Lambda=[\lambda - \tfrac12 \delta\lambda,\lambda + \tfrac12 \delta \lambda]$ and due to the convergence of eigenvalues property \eqref{convEV} it follows that, for $h$ sufficiently small,  $\gamma_\Lambda=\max_{i \neq k}\frac{\tlam_i}{|\tlam_i-\lambda|} \leq
1 +\max_{i \neq k} \frac{\lambda}{|\tlam_i-\lambda|} \leq 1+2 \delta^{-1}$. In this situation $Q_h^\Lambda$ is a projection on the \emph{one}-dimensional subspace spanned by $\tilde u_k$. 

If the eigenvalue $\lambda$ is multiple or belongs to a cluster of very close eigenvalues one has to chose $\Lambda$ accordingly.
To illustrate this, we consider the case that the first $k$ eigenvalues form a cluster (some or all of these may  be multiple) that is well separated from $\lambda_{k+1}$, with separation parameter $\delta$:
\[
  0 \leq \lambda_1 \leq \ldots \leq \lambda_{k} < \lambda_{k+1}, \quad \delta:= \frac{\lambda_{k+1}- \lambda_k}{\lambda_k}.
\]
For approximation of the eigenspace ${\rm span}(u_j^e)$, $1 \leq j \leq k$, we choose the neighborhood $\Lambda:=[0,\lambda_k+ \tfrac12 \delta \lambda_k]$ of $\lambda= \lambda_j$. Due to the convergence of eigenvalues property it follows that, for $h$ sufficiently small,  $\gamma_\Lambda=\max_{i >k }\frac{\tlam_i}{|\tlam_i-\lambda_j|} \leq
1 + \frac{\lambda_j}{\tlam_{k+1}-\lambda_j} \leq 1+2 \delta^{-1}$. 
In this  case $Q_h^\Lambda$ is the $b_h$-orthogonal projection on the discrete invariant space $\tilde U_k={\rm span}\{\tilde u_1, \ldots, \tilde u_k\}$. The results in Theorems~\ref{thm1} and \ref{thm2} should be interpreted as errors in the approximation of $u_j^e$, $1 \leq j \leq k$, by an element from this $k$-dimensional  space spanned by discrete eigenvectors.  

We consider one further case that we need for the approximability parameter $\Phi_{h,m}$ defined in \eqref{defPhi}. We assume that $\lambda_m$ (for an $m \leq \kmax$) is well-separated from $\lambda_{m+1}$, with separation parameter $\delta$ defined as above. We do not make any assumptions concerning separations  between the eigenvalues $\lambda_1, \dots, \lambda_m$. Define $\Lambda:=[0,\lambda_m+ \tfrac12 \delta \lambda_m]$.  For $(u,\lambda)=(u_j,\lambda_j)$, $1 \leq j \leq m$,  the result of Theorem~\ref{thm2} and $\gamma_\Lambda \leq 1+2 \delta^{-1}$ yields:
\begin{align} \label{estk}
 & \Enorm{ u_j^e- Q_h^\Lambda u^e}  \leq \epsilon_j \\
 & \epsilon_j:=2(1+\delta^{-1}) \left( \tlam_n^{\frac12} \| u_j^e- I_{V_h} u_j^e\|_h + 3 \Enorm{u_j^e -I_{V_h}u_j^e} \right)  + (1+2 \delta^{-1})   \|d_{\lambda_j}(u_j^e,\cdot)\|_{V_h'}. \nonumber
\end{align}
 The projection $Q_h^\Lambda$ maps onto  the space $\tilde U_m= {\rm span}\{\tilde u_1, \ldots,\tilde u_m\}$. Hence, the result \eqref{estk} implies
\[
 {\rm dist}_{\Enorm{\cdot}} ( u_j^e, \tilde U_m) \leq \epsilon_j, \quad 1 \leq j \leq m.
\]
Linear combination and  $\sqrt{1+2 \alpha_h} \Enorm{\sum_{j=1}^m \xi_j u_j^e} \geq \Ecnorm{\sum_{j=1}^m \xi_j u_j}= \sqrt{\sum_{j=1}^m \xi_j^2 \lambda_j}$ yield
\[
 {\rm dist}_{\Enorm{\cdot}} (w, \tilde U_m) \leq \sqrt{2 \sum_{j=1}^m \lambda_j^{-1} \epsilon_j^2} \,\, \Enorm{w}=:E_m \Enorm{w} \quad \text{for all}~w \in U_m^e.  
\]
Now assume that $h$ is sufficiently small such that $E_m <1$. Since ${\rm dim}(\tilde U_m)= {\rm dim}(U_m^e)=m$ we obtain for the approximability parameter $\Phi_{h,m}$, which is a measure for the distance between the subspaces  $\tilde U_m$ and $U_m^e$:
\begin{equation} \label{defPhi2}
 \Phi_{h,m} \leq E_m.
\end{equation}
Note that $E_m$ essentially depends only $\epsilon_j$, $1 \leq j \leq m$, i.e. on approximabilty of the extended eigenvectors $u_j^e$ in $V_h$ and on the defect quantity $\|d_{\lambda_j}(u_j^e,\cdot)\|_{V_h'}$, $1 \leq j \leq m$. 

\subsection{Discussion of results} \label{sectdiscussion}
We discuss a few key points of our abstract error analysis.\\[1ex]
\emph{Penalization used in $a_h(\cdot,\cdot)$, $b_h(\cdot,\cdot)$}. The bilinear forms $a_h(\cdot,\cdot)$, $b_h(\cdot,\cdot)$ used in the discetization in the space $V_h$ \eqref{discrev} must be stable. A minimal condition is that both are positive definite on $V_h$. For these stability properties the penalty terms $k_a(\cdot,\cdot)$, $k_b(\cdot,\cdot)$ are essential. A further important property is the Friedrich's inequality \eqref{friedrichs}, which mimics the property $\|u\| \leq \lambda_1^{- \frac12} \Ecnorm{u}$ for all $u \in H$ on the continous level. An ``appropriate scaling'' of the penalty terms is essential. For stability, these penalty terms should be ``sufficiently large''. On the other hand, we need good approximability properties in these norms, e.g. small values for the parameter $\Theta_{h,j}$ is \eqref{deftheta} and optimal interpolation error estimates in $\|\cdot\|_h$ and $\Enorm{\cdot}$ in the estimate \eqref{estthm2}. One further aspect, related to stability is an appropriate \emph{relative} scaling of the penalty terms, expressed in the conditions \eqref{assstab}, \eqref{assstabA}. In the pure Galerkin setting we have the stabilitty property $\tilde \lambda_j \geq \lambda_j$. For a similar property in the conconforming case with penalization, cf. the lower estimate in \eqref{resEVmain}, we need  a relative scaling condition as in \eqref{assstab}. These different conditions related to penalization lead in our specific discretization methods to a scaling $h^{-2}$ and 1 for the penalization of the normal component on $\Gamma_h$ in $k_a(\cdot,\cdot)$ and $k_b(\cdot,\cdot)$, respectively, and, for the TraceFEM,  scalings $h^{-1}$ and $h$ (cf. \eqref{scalings}) for the normal derivative volume stabilization terms in $k_a(\cdot,\cdot)$ and $k_b(\cdot,\cdot)$, respectively.  
\\
\emph{Different types of consistency conditions.} In the assumptions~\ref{ass1}, \ref{ass2} and \eqref{consdiscrete} we introduced different consistency conditions. The weakest are those in Assumptions~\ref{ass1}. These involve \emph{only} elements from the space spanned by the (extended) eigenvectors of the continuous problem. In our applications these are smooth functions, and the smoothness property leads to ``higher order'' estimates for the parameters $\alpha_h$, $\beta_h$ in Assumption~\ref{ass1}. In Assumption~\ref{ass2} and \eqref{consdiscrete}, also elements from the discretization space $V_h$ are involved, leading to worse consistency bounds. We need Assumption~\ref{ass2} to derive (sharp)  bounds for the projection operators in Lemma~\ref{distP} an for the eigenvector defect quantity $\|d_\lambda(u^e,\cdot)\|_{V_h'}$ in Lemma~\ref{boundd}. In the eigenvalue error estimates, e.g. \eqref{resEVmainA}, these ``worse'' consistency parameters $\tilde \alpha_h$, $\tilde \beta_h$ are multiplied with the ``small'' approximability parameter $\Theta_{h,j}$, which then results in satisfactory error bounds. \\
\emph{Resulting eigenvalue error bounds}. The main error estimates for the eigenvalues, derived in Theorems~\ref{Thmeigenvalues1} and \ref{Thmeigenvalues3} are explicit in the sense that all constants and relevant parameters are specified. If we make  the simplifying assumption $\tilde \alpha_h^2 \leq c \alpha_h$, $\tilde \beta_h^2 \leq c \beta_h$, the key quantities that determine the error bound are the consistency parameters $\alpha_h$, $\beta_h$, and the approximability parameters $\Theta_{h,j}$, $\Phi_{h,m}$, cf. Corollary~\ref{corolconvergence2}. Note that the error bound depends linearly on  the consistency parameters but quadraticly on the approximability parameters. This linear dependene can not be improved, cf. Remark~\ref{rembest}. The parameter $\Theta_{h,j}$ depends on approximabilty of \emph{all} eigenvectors $u_i^e$, $1 \leq i \leq j$, in the finite element space $V_h$. This dependence on all eigenvector approximations is \emph{sub}optimal in the sense as discussed in \cite{Knyazev}. In the setting of our applications it is reasonable to assume that all eigenvectors corresponding to the smallest $j \leq \kmax$ eigenvalues have comparable approximability properties. Hence, this may justify the use of $\Theta_{h,j}$.  The approximability parameter $\Phi_{h,m}$ measures the distance between a continuous and corresponding discrete invariant space of dimension $m \geq j$. This quantity is avoided in the eigenvalue error estimate in Lemma~\ref{Thmeigenvalues2}. The result in that lemma, however, involves the relatively large consistency parameters $\hat \alpha_h$, $\hat \beta_h$, which leads to a suboptimal error bound, cf. Corollary~\ref{corolconvergence}. \\
\emph{Resulting eigenvector error bounds}. The main error estimates for the eigenvector approximations, derived in Theorems~\ref{thm1} and \ref{thm2} are explicit in the sense that all constants and relevant parameters are specified. An important ``constant'' is the gap parameter $\gamma_\Lambda$. Apart from this gap paramater these error bounds are determined by natural interpolation or projection errors and the defect quantity $\|d_\lambda(u^e,\cdot)\|_{V_h'}$. The latter can be bounded in terms of the consistency parameters $\tilde \alpha_h$, $\tilde \beta_h$.

\section{Application to finite element discretizations of the surface Laplace eigenproblem} \label{sectTraceFEM}
We show how  the variational eigenvalue problem \eqref{varev} and its finite element discretizations \eqref{P2hh} (SFEM) and \eqref{P2h} (TraceFEM) can be analyzed in the general abstract analysis presented above. For both discretizations the choice of the spaces $H$, $\hat H$, $\He$, the bilinear forms $a(\cdot,\cdot)$, $b(\cdot,\cdot)$,  the extension operator $\cE$ and the lifting operator $\CEl$ are explained in Remark~\ref{remexample1}. For the eigenproblem discretization \eqref{discrev} the  bilinear forms
\[
  a_h(\bu,\bv)= \tilde a_h(\bu,\bv)+ k_a(\bu,\bv), \quad b_h(\bu,\bv)= \tilde b_h(\bu,\bv) + k_b(\bu,\bv),
\]
are as follows , cf. section~\ref{sectFEM}, with $E_h(\bu)  := \frac12 \big(\gradGh \bu + \gradGh \bu^T\big)$, $\quad E_{T,h}(\bu):=E_h(\bu) - u_N \bH_h$:
\begin{align*}
\tilde a_h(\bu,\bv) &:= \int_{\Gamma_h} \tr (E_{T,h}\big(\bu)^T E_{T,h}(\bv)\big)\, ds_h + \int_{\Gamma_h}\bP_h \bu_h \cdot \bP_h \bv_h \, ds_h, \\
 k_a(\bu,\bv)&:= h^{-2} \int_{\Gamma_h} (\bu \cdot \tilde{\bn}_h) (\bv \cdot \tilde{\bn}_h)  \, ds_h \quad \text{(SFEM),} \\
 k_a(\bu,\bv)&:= h^{-2} \int_{\Gamma_h} (\bu \cdot \tilde{\bn}_h) (\bv \cdot \tilde{\bn}_h)  \, ds_h + 
 h^{-1} \int_{\Omega_{\Theta}^{\Gamma}} (\nabla \bu \bn_h) \cdot (\nabla \bv \bn_h)  \, dx ~ \text{(TraceFEM),}
\\
 \tilde b_h(\bu,\bv) &= \int_{\Gamma_h} \bP_h \bu \cdot \bP_h \bv \,ds_h, \\
 k_b(\bu,\bv) &= \int_{\Gamma_h} (\bu \cdot \bn_h) (\bv \cdot \bn_h)\,ds_h\quad \text{(SFEM),} \\
 k_b(\bu,\bv) &= \int_{\Gamma_h} (\bu \cdot \bn_h) (\bv \cdot \bn_h)\,ds_h + h\int_{\Omega_{\Theta}^{\Gamma}} (\nabla \bu \bn_h) \cdot (\nabla \bv \bn_h)  \, dx ~ \text{(TraceFEM).}
\end{align*}
In both cases (SFEM and TraceFEM) the bilinear forms $a_h(\cdot,\cdot)$, $b_h(\cdot,\cdot)$ are scalar products on the finite element space $V_h$.  Furthermore, there is a constant $c_F$, independent of $h$ such that the Friedrich's inequality \eqref{friedrichs} holds. These properties are easy to derive, cf. \cite{hansbo2016analysis} (for SFEM) and \cite{Jankuhn4} (for TraceFEM). Recall that $k_g$ denotes the degree of the finite element  polynomials used in the geometry approximation, cf. \eqref{kg1} and \eqref{kg2}. Concerning the \emph{consistency parameters} we have the following result.
\begin{lemma} \label{lemconsistres}
 For both methods (SFEM and TraceFEM) the following estimates hold for the consistency parameters defined in Assumptions~\ref{ass1}, \ref{ass2} and \eqref{consdiscrete}:
 \begin{align}
  \max \{ \tilde \alpha_h, \tilde \beta_h,\hat \alpha_h, \hat \beta_h\} &  \leq c h^{k_g}, \label{estA}\\
  \max \{ \alpha_h,  \beta_h  \} & \leq c h^{k_g+1}, \label{estB}
 \end{align}
 with a suitable constant $c$ independent of $h$.
\end{lemma}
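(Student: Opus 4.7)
The plan is to reduce all four consistency bounds to standard pointwise geometric estimates for the surface approximation $\Gamma_h$, combined with the key observation that the extensions $u^e$ of continuous eigenvectors $u \in U_{\kmax}$ inherit the tangentiality property $u^e \cdot \bn = 0$ and the constancy $\nabla u^e \, \bn = 0$ along normals, which produces an extra power of $h$ in several critical terms. The needed pointwise bounds are the classical ones from \cite{hansbo2016analysis,grande2017higher,Jankuhn2,Jankuhn4}:
\[
 \|\bn-\bn_h\|_{\infty}+\|\bP-\bP_h\|_{\infty}+\|\bH-\bH_h\|_{\infty}\leq c h^{k_g},\quad
 \|\bn-\tilde \bn_h\|_{\infty}+|1-\mu_h|_{\infty}\leq c h^{k_g+1},
\]
together with the tubular-neighbourhood trace bound $\|w^e\|_{L^2(\Omega_\Theta^\Gamma)}^2 \leq c\, h\, \|w\|_{L^2(\Gamma)}^2$ valid for normal extensions.

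To bound $\tilde\alpha_h,\tilde\beta_h,\hat\alpha_h,\hat\beta_h$, I would transform the integrals from $\Gamma_h$ to $\Gamma$ via the closest point projection and from $\Omega_\Theta^\Gamma$ to the tubular neighbourhood of $\Gamma$, then decompose the difference of integrands into terms involving $\bP-\bP_h$, $\bH-\bH_h$, $1-\mu_h$, etc. Since one argument is now in $V_h$ or $\tilde U_{\kmax}$ and carries no tangential or normal-constancy structure, no cancellation is available and Cauchy--Schwarz in the appropriate $H^1$-type norms on $\Gamma_h$ or $\Omega_\Theta^\Gamma$ gives a factor $h^{k_g}$ from the geometric perturbations. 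For the penalty terms in $k_a,k_b$, one uses only $\|\tilde\bn_h\|_\infty\leq 1+ch^{k_g+1}$ and the Friedrich's inequality \eqref{friedrichs} to match the required seminorms; the volume stabilization contributes $\rho_a s_h\leq c h^{-1}\cdot h\cdot h^0\cdot \Enorm{u}\Enorm{v}$ which after the overall analysis delivers the stated $h^{k_g}$.

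To upgrade to $\alpha_h,\beta_h\leq ch^{k_g+1}$, both arguments are smooth normal extensions of tangential eigenfunctions. For $\alpha_{h,2}$: since $u^e\cdot \bn =0$ pointwise, we have $u^e\cdot \tilde\bn_h = u^e\cdot (\tilde\bn_h-\bn)$, hence
\[
 k_a(u^e,v^e)=h^{-2}\!\int_{\Gamma_h}(u^e\cdot(\tilde\bn_h-\bn))(v^e\cdot(\tilde\bn_h-\bn))ds_h \leq c h^{-2}\|\tilde\bn_h-\bn\|_\infty^2\|u^e\|\|v^e\|\leq c h^{2k_g}\Enorm{u^e}\Enorm{v^e},
\]
which is $\leq ch^{k_g+1}$ for $k_g\geq 1$. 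The analogous reasoning for the normal-derivative stabilization, using $\nabla u^e\,\bn=0$, yields $\rho_a s_h(u^e,v^e)\leq c h^{2k_g}\Enorm{u^e}\Enorm{v^e}$. For $\alpha_{h,1}$, the smoothness of $u,v$ and their tangentiality allow identities of the form $E_{T,h}(u^e)=E(u)+O(h^{k_g})$-terms whose leading contributions cancel against $(1-\mu_h)$ and $(\bP-\bP_h)$ terms when tested against tangential $v$; the same mechanism underlying the $O(h^{k_g+1})$ geometric-consistency result in \cite{Jankuhn4,grossvectorlaplace} for the bulk vector-Laplace bilinear form applies verbatim. The argument for $\beta_{h,1},\beta_{h,2}$ is entirely parallel and simpler, since $\tilde b_h$ involves only $\bP_h$ and $\bn_h$.

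The main obstacle is the careful algebraic bookkeeping in the $\alpha_{h,1}$ estimate for $\tilde a_h$: the integrand $\operatorname{tr}(E_{T,h}(u^e)^T E_{T,h}(v^e))$ expands into numerous products of $\bP_h$, $\gradGh$ and $\bH_h$ terms, and one must identify at each occurrence which factor, together with the tangentiality $u^e\cdot\bn=\nabla u^e\,\bn=0$, produces the necessary cancellation that upgrades the naive $h^{k_g}$ bound to $h^{k_g+1}$. This is precisely the analysis carried out in \cite[Sec.~4--5]{Jankuhn4} for the TraceFEM and in \cite{hansbo2016analysis,grossvectorlaplace} for the SFEM, and the proof can be assembled by assembling those component estimates.
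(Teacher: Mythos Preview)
Your proposal follows essentially the same route as the paper's proof: both defer the full technical details to \cite{hansbo2016analysis,Jankuhn4} and sketch the key mechanism, namely that tangentiality $u^e\cdot\bn=0$ and normal-constancy $\nabla u^e\,\bn=0$ of extended eigenvectors yield an extra factor of $h$ in the penalty and geometric-error terms when both arguments lie in $U_{\kmax}^e$. Two technical points the paper makes explicit that your sketch glosses over: (i) the cited consistency estimates are stated in $H^1(\Gamma_h)$-norms, and converting these to $\Enorm{\cdot}$ requires a discrete Korn inequality on $V_h$ together with the continuous Korn inequality on $\bV_T$; (ii) the improved bound $\alpha_{h,1}\leq ch^{k_g+1}$ actually needs $H^2$-regularity of the eigenfunctions, so the constant absorbs a factor $\|u\|_{H^2(\Gamma)}$ controlled via elliptic regularity, not merely $\Enorm{u^e}$.
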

\begin{proof}
 Proofs of these results are given in the papers \cite{hansbo2016analysis} (for SFEM) and \cite{Jankuhn4} (for TraceFEM). These proofs are rather long and technical. For comparing derivatives on $\Gamma_h$ and $\Gamma$, transformation rules are used, e.g. $\nabla_{\Gamma_h} w= \bB^T \nabla_\Gamma w^\ell$ for a scalar valued function on $\Gamma_h$ with lifting denoted by $w^\ell$, and with a matrix $\bB$ that satisfies $\|\bB- \bP \bP_h\|_{L^\infty(\Gamma_h)} \leq c h^{k_g +1}$. We sketch  how bounds for $\tilde \beta_h$, $\hat \beta_h$, $\beta_h$ can be derived to illustrate the improvement of the estimate in \eqref{estB} compared to \eqref{estA}. We restrict to the SFEM (TraceFEM can be treated very similar). The difference in surface measure $ds_h$ on  $\Gamma_h$ and $ds$ on $\Gamma$ is described by $ds = \mu_h ds_h$, with $\|1- \mu_h\|_{L^\infty(\Gamma_h)} \leq c h^{k_g+1}$. For $\bu,\bv \in H^1(\Gamma_h)^3$ we have, with $\bu^\ell,\, \bv^\ell \in H^1(\Gamma)^3$ the lifting to $\Gamma$, 
 \begin{align}
| \tilde b_h(\bu,\bv) - b(\CEl \bu,\CEl \bv)| & = \big| \int_{\Gamma_h} \bP_h \bu \cdot \bP_h \bv \, ds_h
-  \int_{\Gamma} \bP \bu^\ell  \cdot \bP \bv^\ell  \, ds\big| \nonumber \\
 & = \big| \int_{\Gamma_h} \bP_h \bu \cdot \bP_h \bv \, ds_h
-  \int_{\Gamma_h} \bP \bu  \cdot \bP \bv\mu_h \, ds_h\big| \nonumber \\
& = \big| \int_{\Gamma_h} (\bP_h - \mu_h \bP)\bu \cdot  \bv \, ds_h \big| \label{pp} \\
 & \leq \|\bP_h - \mu_h \bP\|_{L^\infty(\Gamma_h)} \|\bu\|_{L^2(\Gamma_h)} \|\bv\|_{L^2(\Gamma_h)} \nonumber \\ & \leq c h^{k_g} \|\bu\|_{L^2(\Gamma_h)} \|\bv\|_{L^2(\Gamma_h)}. \nonumber
 \end{align}
For the consistency parameter $\hat \beta_h$, cf. \eqref{consdiscrete}, this yields $\hat \beta_h \leq c h^{k_g}$, i.e., the estimate in \eqref{estA} for $\hat \beta_h$. For the penalty term $k_b(\bu,\bv)$, with $\bu \in \cE(\bV_T)$, hence, $\bu \cdot \bn =0$, and $ \bv \in H^1(\Gamma_h)^3$ we get
\begin{equation} \label{bp1}
  k_b(\bu,\bv)= \int_{\Gamma_h} \bu\cdot(\bn_h -  \bn) \bv \cdot \bn_h \, ds_h \leq \|\bn_h -  \bn\|_{L^\infty(\Gamma_h)} \|\bu\|_{L^2(\Gamma_h)} \|\bv\|_{L^2(\Gamma_h)}.
\end{equation}
Combining this with $\|\bn_h -  \bn\|_{L^\infty(\Gamma_h)} \leq c h^{k_g}$ and the result \eqref{pp} this yields the estimate $\tilde \beta_h \leq c h^{k_g}$ for the consistency parameter $\tilde \beta_h$ defined in Assumption~\ref{ass2}. This yields the result \eqref{estA} for the parameter $\tilde \beta_h$. We now consider the case, as in Assumption~\ref{ass1}, that both arguments are in the space of extended eigenvectors, i.e., $\bu,\bv  \in \cE(U_{\kmax})$. This implies $\bP \bu = \bu$, $\bP \bv= \bv$. In that case the term $\bP_h - \mu_h \bP$ in \eqref{pp} can be replaced by $\bP(\bP_h - \mu_h \bP)\bP= \bP(\bP_h - \mu_h \bI)\bP$, for which an (improved) estimate $\|\bP(\bP_h - \mu_h \bI)\bP\|_{L^\infty(\Gamma_h)} \leq ch^{k_g+1}$ holds. This yields $\beta_{h,1} \leq ch^{k_g+1}$ in \eqref{beta1}. For the penalty term we obtain an (improved) estimate
\begin{equation} \label{bp2}\begin{split}
   k_b(\bu,\bv)& = \int_{\Gamma_h} \bu\cdot(\bn_h -  \bn) \bv \cdot (\bn_h- \bn) \, ds_h \leq \|\bn_h -  \bn\|_{L^\infty(\Gamma_h)}^2 \|\bu\|_{L^2(\Gamma_h)} \|\bv\|_{L^2(\Gamma_h)} \\ & \leq c h^{2 k_g}\|\bu\|_{L^2(\Gamma_h)} \|\bv\|_{L^2(\Gamma_h)}.
\end{split} \end{equation}
This yields $\beta_{h,2} \leq  c h^{2 k_g}$ for the parameter in \eqref{beta2}. Hence, we get $\beta_h = \beta_{h,1}+\beta_{h,2} \leq c h^{k_g+1}$, which is the result \eqref{estB} for the parameter $\beta_h$. 

The results \eqref{estA} and \eqref{estB} for the $\alpha$-parameters are more difficult to derive, due to the transformation rules for derives that are needed. For proofs we refer to \cite{hansbo2016analysis} and \cite{Jankuhn4}. We give a few comments concerning the proofs given in these papers. Estimates for the consistency of the $\tilde a_h(\cdot,\cdot)$ bilinear form are given in Lemma 5.5 in \cite{hansbo2016analysis} and Lemma 5.15 in \cite{Jankuhn4}. In the upper bounds derived there, instead of the desired norm $\Enorm{\bu}$  a term of the form $\|\bu\|_{H^1(\Gamma_h)} + k_a(\bu,\bu)^\frac12$ occurs. Note that by definition $k_a(\bu,\bu)^\frac12 \leq\Enorm{\bu}$ holds.  Hence, it remains to bound $\|\bu\|_{H^1(\Gamma_h)}$ in terms of $\Enorm{\bu}$ for $\bu \in V_h+ U_{\kmax}^e$. For $\bu \in V_h$ such a result follows from a ``discrete Korn's inequality'', cf. Lemma 5.16 in \cite{Jankuhn4} and Lemma 5.7 in \cite{hansbo2016analysis}. For $\bu^e= \cE(\bu) \in U_{\kmax}^e$ we can use the Korn's inequality in $\bV_T$, cf. \cite{Jankuhn1}, and \eqref{resCF1}: $\|\bu^e\|_{H^1(\Gamma_h)} \leq c \|\bu\|_{H^1(\Gamma)} \leq c \, a(\bu,\bu)^\frac12 \leq c \, a_h(\bu^e,\bu^e)^\frac12 = c \Enorm{\bu^e}$. For $\bu, \bv \in U_{\kmax}^e$ an  improved consistency bound $\sim h^{k_g+1}$ for $\tilde \alpha_h$  (for SFEM) is derived in Lemma 5.5 in \cite{hansbo2016analysis}. For  this bound to hold, one needs $H^2$-regularity of the arguments $\bu, \bv \in U_{\kmax}^e$, and in the resulting consistency estimates  factors $\|\bu^\ell \|_{H^2(\Gamma)}$ and $\|\bv^\ell \|_{H^2(\Gamma)}$ occur. For the eigenfunctions of the vector Laplacian, on a sufficiently smooth surface $\Gamma$,  we have an $H^2$-regularity property that can be used to control $\|\bu^\ell \|_{H^2(\Gamma)}$ in terms of $\|\bu^\ell\|_{L^2(\Gamma)} \leq c \Enorm{\bu}$. Thus we get a bound $\alpha_{h,1} \leq c h^{k_g+1}$ for the consistency parameter in \eqref{alpha1}. For deriving a bound for the penalty term $k_a(\cdot,\cdot)$ one can proceed as in \eqref{bp1}-\eqref{bp2}. Note, however, that in $k_a(\bu,\bv)$ we have a scaling of $\int_{\Gamma_h} (\bu\cdot \tilde \bn_h) (\bv \cdot \tilde \bn_h) \, ds_h$   with $\eta \sim h^{-2}$. To obtain satisfactory (optimal) consistency error bounds one needs an improved normal $\tilde \bn_h$ such that $\|\tilde \bn_h - \bn\|_{L^\infty(\Gamma_h)} \leq c h^{k_g+1}$ holds.
\end{proof}

The results above imply that the key Assumptions~\ref{ass1}, \ref{ass2} and \ref{Friedrichs} needed in our general error analysis are satisfied. In the eigenvalue and eigenvector bounds that were derived, 
besides the consistency parameters treated in Lemma~\ref{lemconsistres} also certain \emph{approximability} parameters are used. We now study these parameters. In the eigenvalue error estimates the approximability parameters $\Theta_{h,j}$ and $\Phi_{h,m}$ occur. In Lemma~5.3 in \cite{hansbo2016analysis} and Lemma 5.1 in \cite{Jankuhn4} interpolation error estimates 
of the form
\begin{equation} \label{intest}
 \Enorm{\bu^e - I_{V_h} \bu^e} \leq c h^k \|\bu\|_{H^{k+1}(\Gamma)}, \quad \bu \in V_T \cap H^{k+1}(\Gamma)^3
\end{equation}
for the surface and trace finite element spaces $V_h$ are derived. These interpolation operators are also optimal w.r.t. the weaker $\|\cdot\|_h$-norm:
\begin{equation} \label{intestL2}
\|\bu^e - I_{V_h} \bu^e \|_h \leq c h^{k+1} \|\bu\|_{H^{k+1}(\Gamma)}, \quad \bu \in V_T \cap H^{k+1}(\Gamma)^3.
\end{equation}
We assume that the surface has sufficient smoothness such that for the range of $k$ values that we consider the vector-Laplace eigenfunctions have $H^{k+1}$ regularity. This implies that for a given eigenfunction $\bu $ the regularity quantity $\|\bu\|_{H^{k+1}(\Gamma)}/\Enorm{\bu^e}$ is finite. These results imply an estimate
\begin{equation} \label{b1}
  \Theta_{h,j} \leq c_j h^k,    \quad 1 \leq j \leq \kmax,
\end{equation}
with $c_j$ depending on the regularity quantity of the first $j$ eigenfunctions. For obtaining an estimate for $\Phi_{h,m}$ we use the result \eqref{defPhi2}. For the term $E_m$ we need (only) bounds for the quantities $\epsilon_j$ defined in \eqref{estk}. For this we can use the interpolation error bounds \eqref{intest}-\eqref{intestL2}, the estimate $\tilde \lambda_n \leq c h^{-2}$ for the largest eigenvalue of the discrete problem and the result in Lemma~\ref{boundd} for the defect quantity $\|d_\lambda(\bu^e,\cdot)\|_{V_h'}$. Thus we get
\begin{equation} \label{estI}
 \epsilon_j \leq c h^{k} + c( \tilde \alpha_h + \tilde \beta_h) \leq c_\delta (h^k +h^{k_g}),\quad 1 \leq j \leq m,
\end{equation}
with a constant $c_\delta$ that depends on the gap parameter $\delta=\delta_m=\frac{\lambda_{m+1}-\lambda_m}{\lambda_m}$. Thus we obtain the estimate
\begin{equation} \label{b2}
  \Phi_{h,m} \leq c_\delta (h^k +h^{k_g}) 
\end{equation}
(with a possibly different constant $c_\delta$). 
Using the bounds above for the consistency and approximability parameters we obtain the following main result.
\begin{corollary} \label{corolmain}
 For the SFEM and TraceFEM defined in section~\ref{sectFEM} the following eigenvalue error bound holds:
 \begin{equation} \label{R1}
  \frac{|\lambda_j - \tilde \lambda_j|}{\lambda_j} \leq c_1 h^{k_g+1} +c_2 h^{2k}, \quad 1 \leq j \leq m \leq \kmax.
 \end{equation}
Take $1 \leq j \leq \kmax$ and $\Lambda$ a small neighborhood on $\lambda_j$, cf. section~\ref{secteigenvector}. For the $b_h$-orthogonal projection $Q_h^\Lambda \bu_j^e$ of $\bu_j^e$ onto the discrete invariant space ${\rm span}\{\, \tilde \bu_i~|~ \tilde \lambda_i \in \Lambda\,\}$ the following holds:
\begin{equation} \label{R2}
 \Enorm{\bu_j^e - Q_h^\Lambda \bu_j^e} \leq c \gamma_\Lambda (h^k + h^{k_g}).
\end{equation}
with the gap parameter $\gamma_\Lambda$ as in \eqref{gapparameter}.
\end{corollary}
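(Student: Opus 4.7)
\textbf{Proof proposal for Corollary~\ref{corolmain}.}

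The plan is to simply plug the bounds from Lemma~\ref{lemconsistres} and the approximability estimates \eqref{b1}, \eqref{b2} into the abstract error bounds of section~\ref{sectRR} (for the eigenvalues) and section~\ref{secteigenvector} (for the eigenvectors). The only real work is to check that every hypothesis of the abstract theorems is fulfilled by the SFEM and TraceFEM, that the resulting $h$-powers combine as claimed, and to pick the sharper of the two available eigenvalue bounds so as to avoid the spurious factor $\hat\alpha_h,\hat\beta_h$.

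\emph{Eigenvalue bound \eqref{R1}.} I would combine the lower estimate of Theorem~\ref{Thmeigenvalues1} with the upper estimate of Theorem~\ref{Thmeigenvalues3}; the resulting relative eigenvalue error is summarised in Corollary~\ref{corolconvergence2} as
\[
\frac{|\lambda_j-\tilde\lambda_j|}{\lambda_j}\le c\max\{\alpha_h,\beta_h,\Theta_{h,j}^2,\Phi_{h,m}^2\}+\text{h.o.t.},\qquad 1\le j\le m.
\]
Before applying it I must verify (i) Assumptions~\ref{ass1}, \ref{ass2} and the Friedrich's inequality \eqref{friedrichs}, which hold by Lemma~\ref{lemconsistres} and by the stability results cited from \cite{hansbo2016analysis,Jankuhn4}; (ii) the smallness hypothesis \eqref{condPhi} for Theorem~\ref{Thmeigenvalues3}, which is automatic for $h$ small since $\Phi_{h,m}\to 0$ via \eqref{b2}. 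Now I substitute: Lemma~\ref{lemconsistres} gives $\alpha_h+\beta_h\le ch^{k_g+1}$, \eqref{b1} gives $\Theta_{h,j}^2\le ch^{2k}$, and \eqref{b2} gives $\Phi_{h,m}^2\le c(h^{2k}+h^{2k_g})$. Since $k_g\ge 1$ we have $h^{2k_g}\le h^{k_g+1}$, so all four quantities are bounded by $c_1h^{k_g+1}+c_2h^{2k}$, which is \eqref{R1}. The higher-order terms in Corollary~\ref{corolconvergence2} are products of two such small quantities and are absorbed.

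\emph{Eigenvector bound \eqref{R2}.} I would start from the estimate of Theorem~\ref{thm2},
\[
\Enorm{u_j^e-Q_h^\Lambda u_j^e}\le(\gamma_\Lambda+1)\bigl(\tilde\lambda_n^{1/2}\|u_j^e-I_{V_h}u_j^e\|_h+3\Enorm{u_j^e-I_{V_h}u_j^e}\bigr)+\gamma_\Lambda\|d_{\lambda_j}(u_j^e,\cdot)\|_{V_h'}.
\]
The three ingredients are then bounded separately: the interpolation estimates \eqref{intest}, \eqref{intestL2} give $\Enorm{u_j^e-I_{V_h}u_j^e}\le ch^k$ and $\|u_j^e-I_{V_h}u_j^e\|_h\le ch^{k+1}$; the standard inverse-type bound $\tilde\lambda_n\le ch^{-2}$ (used already in \eqref{estI}) turns the first summand into $ch^{-1}\cdot h^{k+1}=ch^k$; and Lemma~\ref{boundd} combined with the bound $\tilde\alpha_h+\tilde\beta_h\le ch^{k_g}$ from Lemma~\ref{lemconsistres} gives $\|d_{\lambda_j}(u_j^e,\cdot)\|_{V_h'}\le ch^{k_g}$. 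Adding these contributions yields the claimed $c\gamma_\Lambda(h^k+h^{k_g})$.

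\emph{Where the friction lies.} The algebra is routine once the abstract machinery is in place; the only genuine point of care is to confirm that \emph{all} hypotheses of the abstract theorems (in particular \eqref{condPhi} and the smallness conditions $\alpha_h,\beta_h,\tilde\alpha_h,\tilde\beta_h<\tfrac12$) hold uniformly for $j\le \kmax$ and $m\le \kmax$, which forces an implicit ``$h$ sufficiently small'' hypothesis, and to record that the regularity-dependent constants $c_j$ in \eqref{b1} and the gap-dependent constant $c_\delta$ in \eqref{b2} are finite under the $H^{k+1}$-regularity assumption on the eigenfunctions. I would also remark that using Theorem~\ref{Thmeigenvalues3} rather than Lemma~\ref{Thmeigenvalues2} is essential: the latter would replace $h^{k_g+1}$ in \eqref{R1} by the weaker $h^{k_g}$ coming from $\hat\alpha_h,\hat\beta_h$, and the linear-in-$\alpha_h,\beta_h$ behaviour of the optimal bound (sharp already in the toy setting of Remark~\ref{rembest}) would be lost.
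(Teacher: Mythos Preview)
Your proposal is correct and follows essentially the same route as the paper's own proof: Corollary~\ref{corolconvergence2} (i.e., Theorems~\ref{Thmeigenvalues1} and \ref{Thmeigenvalues3}) combined with \eqref{estB}, \eqref{b1}, \eqref{b2} for \eqref{R1}, and Theorem~\ref{thm2} together with Lemma~\ref{boundd}, \eqref{estA}, \eqref{intest}, \eqref{intestL2} and $\tilde\lambda_n\le ch^{-2}$ for \eqref{R2}. Your write-up is actually more explicit than the paper's in checking the hypotheses (in particular \eqref{condPhi}) and in noting the absorption $h^{2k_g}\le h^{k_g+1}$ for $k_g\ge 1$, which the paper leaves implicit.
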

\begin{proof}
 We use Corollary~\ref{corolconvergence2} combined with the estimates in \eqref{estB} and \eqref{b1}-\eqref{b2}. This yields the result \eqref{R1}. The result \eqref{R2} follows from Theorem~\ref{thm2}, Lemma~\ref{boundd} and the estimates \eqref{estA}, \eqref{intest}, \eqref{intestL2} and $\tilde \lambda_n \leq c h^{-2}$. 
\end{proof}

We comment on the results in Corollary~\ref{corolmain}. From the eigenvalue error analysis for the conforming Galerkin case (no penalization and no geometry errors) it is well-known that the convergence order $2k$ in the eigenvalue error bound \eqref{R1} is optimal. Also the order $k_g+1$ related to the geometry error in \eqref{R1} is optimal in the following sense, cf. also Remark~\ref{rembest}. For the surface approximation $\Gamma_h \approx \Gamma$ used in the SFEM and TraceFEM we have the sharp estimate ${\rm dist} (\Gamma_h,\Gamma ) \leq c h^{k_g+1}$. As a specific example, consider a sphere with radius $r$,  $\Gamma=B(0;r)$. For this case the smallest three eigenvalues are $\lambda_1=\lambda_2=\lambda_3=1$, corresponding to the three dimensional space of Killing vector fields. The ``lowest frequency'' eigenvalue $\lambda_4$ scales linearly with the area of the sphere $\lambda_4\sim r^2$. Assume that the exact surface corresponds to $r=1$ and due to geometry approximation we have $\Gamma_h=B(0; 1-h^{k_g+1})$. If only this geometry error is considered, i.e. there are no approximation errors ($V_h=H^1(\Gamma_h)^3$), we have an error $|\lambda_4 - \tilde \lambda_4| \sim h^{k_g+1}$. Hence the eigenvalue error caused by geometry approximation can not be better than of order $h^{k_g+1}$. We note that in numerical experiments, cf. the results in \cite{BonitoEV} and in section~\ref{sectExp}, we typically observe a rate of convergence higher than $h^{k_g+1}$. This is probably due to the fact that in the geometry approximation there occur systematic cancellation effects. For example, a uniform shrinking (or expansion) of the geometry as in the sphere example $\Gamma= B(0;1) \approx B(0; 1-h^{k_g+1}) =\Gamma_h$, is not realistic. Instead it may happed that ${\rm dist}(\Gamma_h,\Gamma) \sim h^{k_g +1}$ but $|\int_{\Gamma} 1 \, ds - \int_{\Gamma_h} 1 \, ds_h | \sim  h^{k_g +2}$. Such cancellation effects are \emph{not} considered in our error analysis. In \cite{BonitoEV}, for the scalar Laplace-Beltrami eigenvalue problem, an analysis of superconvergence effects w.r.t. geometry errors for the SFEM is presented. The eigenvalue error bound \eqref{R1} is suboptimal in the sense that we need approximability of \emph{all} (extended) eigenvectors $\bu_1^e, \dots,\bu_j^e$, cf. the discussion in \cite{Knyazev} for the conforming Galerkin case. 
\\
The estimate \eqref{R2} for the energynorm of the eigenvector approximation is of optimal order. \\
Finally, we briefly comment of an eigenvector error bound in the weaker norm $\|\cdot\|_h$, based on Theorem~\ref{thm1}. We expect that for the first term in the bound in \eqref{estthm1} an estimate $\| \bu_j^e- P_h \bu_j^e\|_h \leq c h^{k+1}$ can be shown to hold. For the second term we obtain, based on Lemma~\ref{boundd} and the estimate \eqref{estA}, $\|d_{\lambda_j}(\bu_j^e,\cdot)\|_{V_h'} \leq c h^{k_g}$, leading to an error bound $ \sim c(h^{k+1} + h^{k_g})$. Experiments, cf. Section~\ref{sectExp}, indicate an (expected) error behaviour $~h^{k+1} + h^{k_g+1}$. Hence, the bound that we obtain is suboptimal. To improve this, we need a better bound for the defect term $d_{\lambda_j}$, cf. Remark~\ref{remNonopt}, in particular an estimate $q \le c h^{k_g+1}$, with $q$ in \eqref{st1}. So far, however, we were not able to derive such a result.

\section{Numerical experiments} \label{sectExp} 
We present results for the vector-Laplace eigenproblem \eqref{varev} on the unit sphere. In this case we have an eigenvalue $\lambda_1=\lambda_2=\lambda_3=1$ with multiplicity 3 corresponding to the three dimensional space of Killing vector fields that consists of rotations around each of the three axes in $\Bbb{R}^3$. The next eigenvalue is $\lambda_4=\lambda_5=\lambda_6=2$ with multiplicity 3. Formulas for the corresponding eigenvectors are not known to us. 

We discretize this problem with the TraceFEM \eqref{P2h}, implemented in the software Netgen/NGSolve with ngsxfem \cite{ngsolve,ngsxfem}. For the construction of the local triangulation $\cT_h^\Gamma$
we start with an unstructured tetrahedral Netgen-mesh with $h_{max} = 0.5$ and locally refine the mesh  using a marked-edge bisection method (refinement of tetrahedra that are intersected by the surface).  After discretization we obtain a discrete generalization eigenvalue problem. The smallest discrete eigenvalues $\tilde \lambda_i$, $1 \leq i \leq 6$, and corresponding eigenvectors are determined with algorithms available in Netgen/NGSolve and the SciPy system \cite{SciPy}.

First we present results for the errors in the discrete eigenvalues $\tilde \lambda_1$, $\tilde \lambda_4$, shown in Fig.~\ref{figres1}. Theory predicts a convergence order $\mathcal{O}(h^{k_g+1}+h^{2k})$. 
\begin{figure}[ht!]
\begin{minipage}{0.48\textwidth}
    \includegraphics[width=0.8\textwidth]{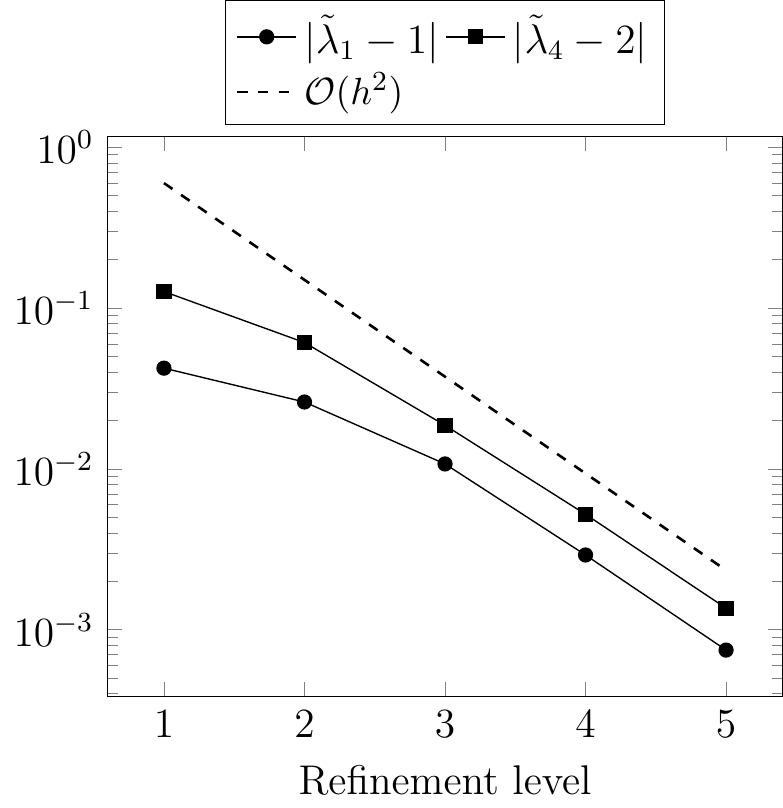}    
\end{minipage}
\begin{minipage}{0.48\textwidth}
       \includegraphics[width=0.8\textwidth]{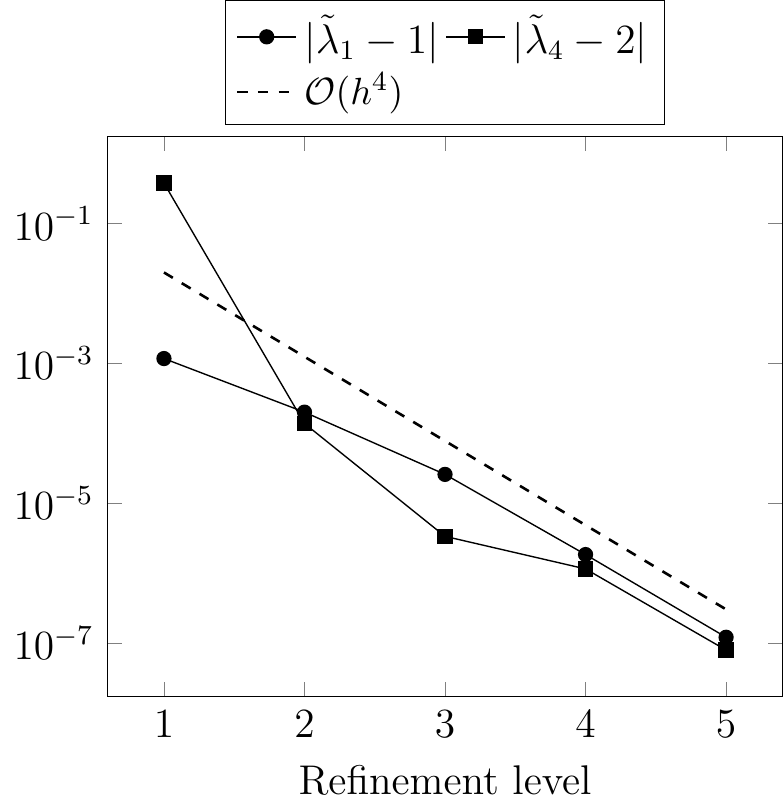}
\end{minipage}
\caption{Eigenvalue errors, $k=k_g=1$ (left) and $k=k_g=2$ (right)}\label{figres1}
\end{figure} 

We observe that for $k=k_g=2$ the convergence is faster as theory predicts. This might be related to a superconvergence that we observe for  the area aproximation $|\Gamma_h| \approx |\Gamma$, cf. Remark~\ref{remsuper}.
\begin{remark} \label{remsuper} \rm We briefly address the error between the exact surface area $|\Gamma|:= \int_\Gamma 1 \, ds$ and the area of the approximate surface $|\Gamma_h|:= \int_{\Gamma_h} 1 \, ds_h$.  First   note that the geometry error bound ${\rm dist}(\Gamma_h,\Gamma) \leq c h^{k_g+1}$ is sharp.  In the generic case one then has $\big| |\Gamma|-|\Gamma_h|\big| \leq c h^{k_g+1}$. Results for the surface area error, in our example of the TraceFEM for the unit sphere, are shown in Fig.~\ref{figres2} (left). We clearly observe a convergence order $h^4$ for the case $k_g=2$, which is one order better than the (expected) generic error bound $h^{k_g+1}=h^3$. 
\end{remark}

To investigate this further we consider the case $k=k_g=3$. Note that for $k_g=3$ we do not observe a superconvergence in Fig.~\ref{figres2} (left). The errors in the eigenvalues are shown in in Fig.~\ref{figres2} (right).  
\begin{figure}[ht!]
\begin{minipage}{0.48\textwidth}
    \includegraphics[width=0.8\textwidth]{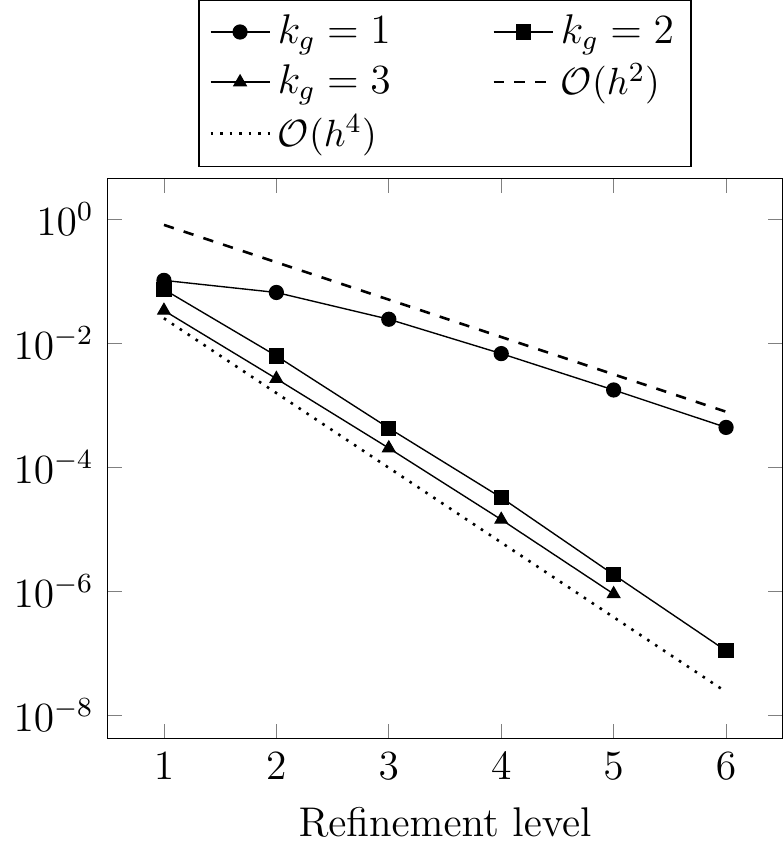}    
\end{minipage}
\begin{minipage}{0.48\textwidth}
       \includegraphics[width=0.8\textwidth]{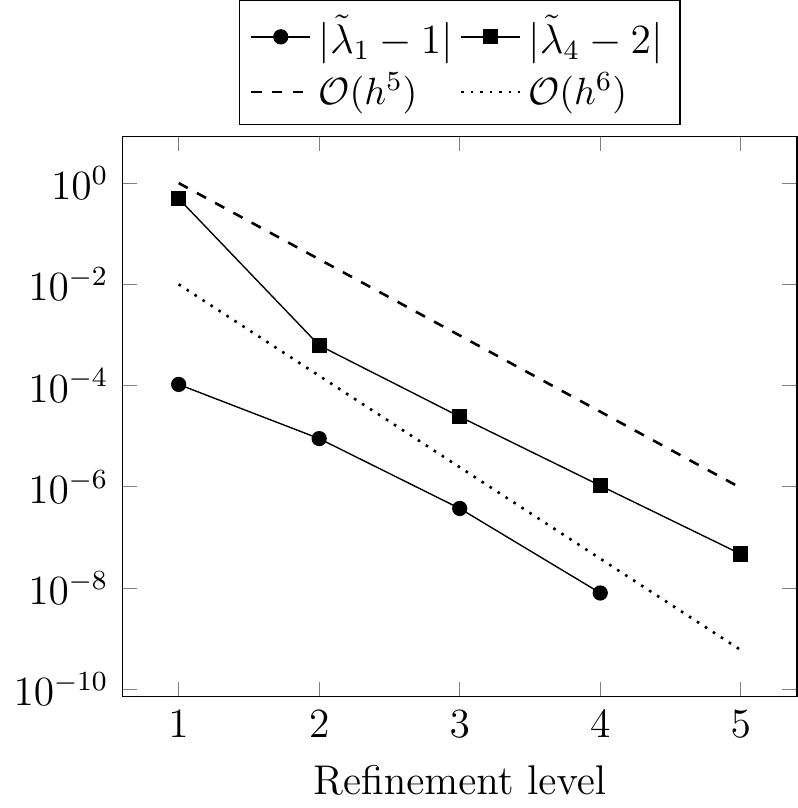}
\end{minipage}
\caption{Area error $\big| |\Gamma|-|\Gamma_h|\big|$ (left). Eigenvalue error for $k=k_g=3$ (right)}\label{figres2}
\end{figure}

We now observe for $\tilde \lambda_4$ a convergence that is (significantly) slower than $h^5$. The estimated convergence order between refinement levels $2$-$3$, $3$-$4$, $4$-$5$ is $4.68$, $4.53$ and $4.48$, respectively. This indicates that the convergence is dominated by the geometric error and its order is close to the theoretically predicted $h^{k_g+1}=h^4$.

We now consider  convergence of eigenvectors. For this we restrict to one of the Killing vector fields, namely $\bu_1(x,y,z)=(-y,x,0)$. We have a multiple eigenvalue $\lambda_1=\lambda_2=\lambda_3=1$ that is well-seperated from the rest of the spectrum. In the analysis in section~\ref{secteigenvector} we can choose a neighborhood $\Lambda=[0,1\tfrac12]$ and then obtain a gap parameter value $\gamma_\Lambda \sim 1$.  
 The $B_h(\cdot,\cdot)$ orthogonal projection on the space of discrete eigenvectors ${\rm span}\{\tilde\bu_1,\tilde \bu_2,\tilde \bu_3\}$, cf. \eqref{defQhg}, is given by
\[
  Q_h^\Lambda \bv = \sum_{j=1}^3 B_h(\bv,\tilde \bu_j) \tilde \bu_j.
\]
The error quantity that we consider is 
  $\Enorm{\bu_1^e -Q_h^\Lambda \bu_1^e}$. The exact eigenvector $\bu_1$ (and also $\bu_1^e$) is $C^\infty$ smooth, hence we have optimal approximation errors in the finite element space. The theory predicts an error bound, cf. Corollary~\ref{corolmain},  $\Enorm{\bu_1^e -Q_h^\Lambda \bu_1^e} \leq c (h^{k_g}+h^{k})$.
  Results for $\Enorm{\bu_1^e -Q_h^\Lambda \bu_1^e}$ and $\|\bu_1^e -Q_h^\Lambda \bu_1^e\|_{L^2(\Gamma_h)}$ are shown in the Figures~\ref{figres3} and \ref{figres4}.
\begin{figure}[ht!]
\begin{minipage}{0.48\textwidth}
    \includegraphics[width=0.8\textwidth]{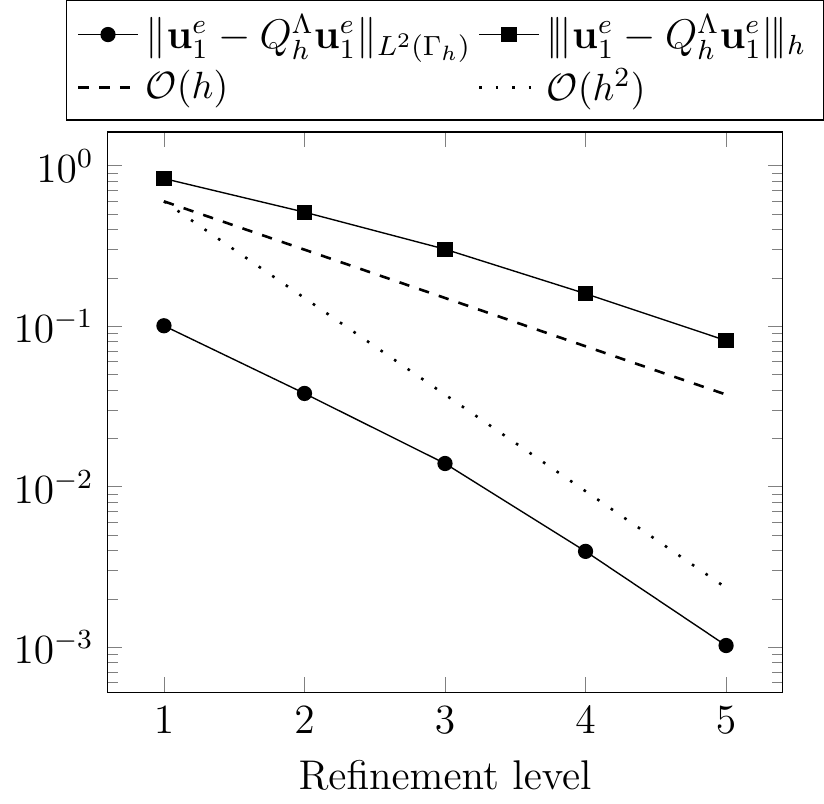}    
\end{minipage}
\begin{minipage}{0.48\textwidth}
       \includegraphics[width=0.8\textwidth]{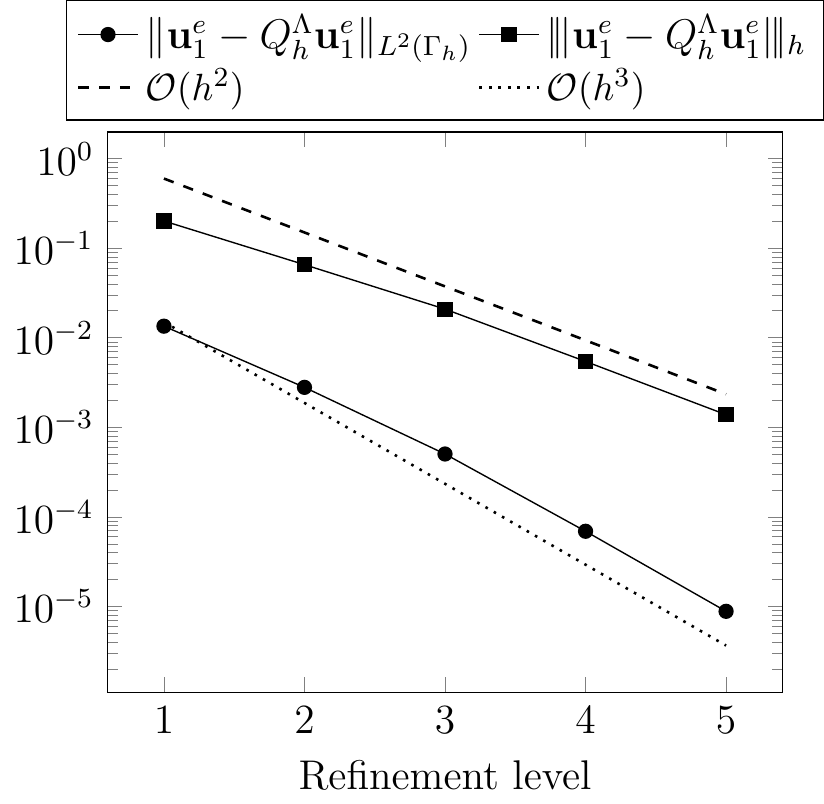}
\end{minipage}
\caption{Eigenvector errors, $k=k_g=1$ (left) and $k=k_g=2$ (right)}\label{figres3}
\end{figure}

In Fig.~\ref{figres3} one clearly observes the predicted $h^{k_g}+h^k$ convergence for the energy norm error. The error in the $L^2$-norm is one order better. To test, whether the term $h^{k_g}$ is sharp (in the scalar Laplace-Betrami case one has $h^{k_g+1}$), we take $k=k_g+1$. Results for $k_g=2$, $k_g=3$ are shown in Fig.~\ref{figres4}. These show that, for the energy norm error,  the bound  $h^{k_g}$ is sharp.
\begin{figure}[ht!]
\begin{minipage}{0.48\textwidth}
    \includegraphics[width=0.8\textwidth]{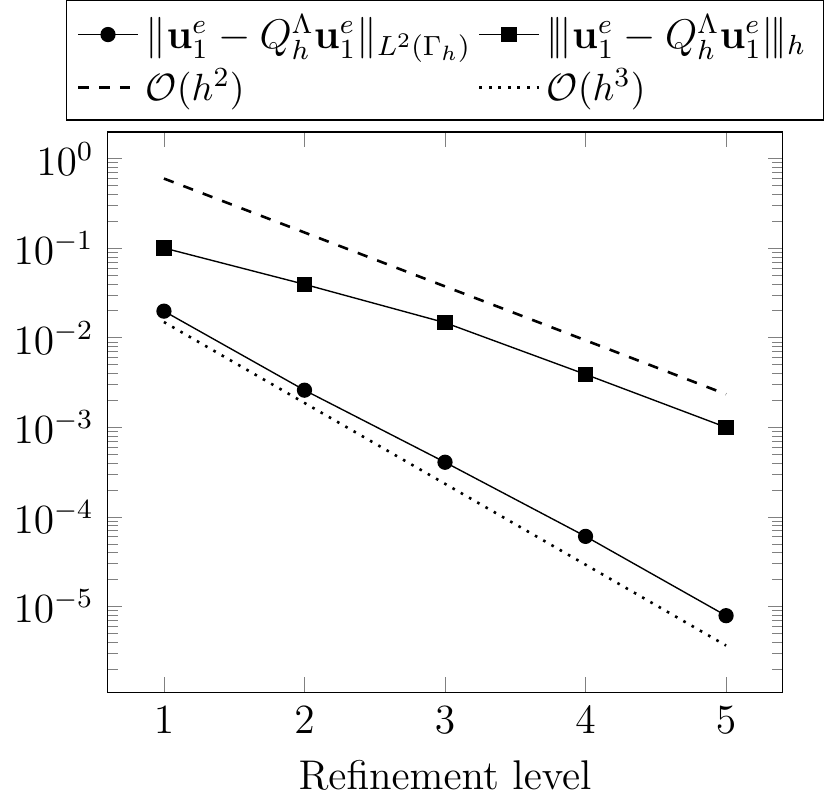}    
\end{minipage}
\begin{minipage}{0.48\textwidth}
       \includegraphics[width=0.8\textwidth]{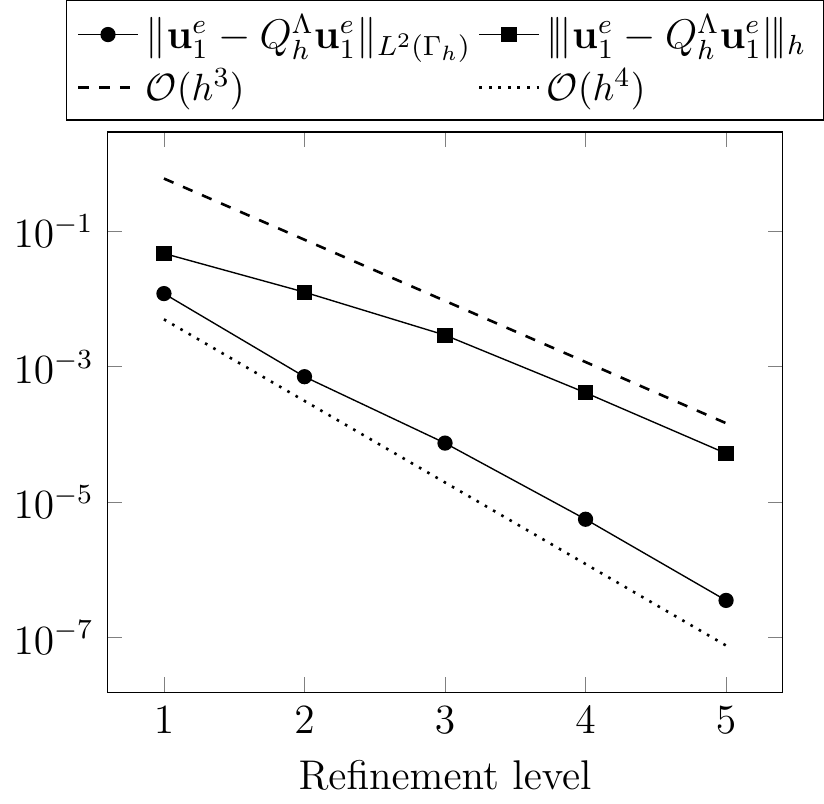}
\end{minipage}
\caption{Eigenvector errors, $k_g=2,k=3$ (left) and $k_g=3,k=4$ (right)}\label{figres4}
\end{figure}
\ 
\\[2ex]
{\bf Acknowledgment.} The author thanks Th.  Jankuhn for providing the results of the numerical experiments.  He also acknowledges financial support of the German Research Foundation (DFG)  within the ResearchUnit  “Vector-  and  tensor  valued  surface  PDEs”  (FOR  3013)  with  project  no.   RE1461/11-1.

\bibliographystyle{siam}
\bibliography{literatur}{}

\end{document}